\documentclass[reqno]{amsart}

\usepackage{amsmath}
\usepackage{amsthm}
\usepackage{amsfonts}
\usepackage{amssymb}
\usepackage{enumitem}
\usepackage[normalem]{ulem}
\usepackage{tikz}
\usepackage{tikz-cd}
\usepackage[colorlinks=true,linkcolor=blue,citecolor={blue}]{hyperref}
\usepackage{hyperref}
\usepackage[nocompress]{cite}{\normalsize }
\usepackage{mathrsfs}
\usepackage{subcaption}
\usepackage[only,llbracket,rrbracket]{stmaryrd}

\usepackage[right= 3cm, left=3cm, a4paper,top=3cm, bottom= 3cm]{geometry}

\newcommand{\oindicator}[1]{\ensuremath{\mathbf{1}_{{#1}}}}

\DeclareMathOperator{\sub}{\mathsf{sub}}

\DeclareMathOperator{\supp}{supp}
\DeclareMathOperator{\He}{He}
\DeclareMathOperator{\Ai}{Ai}
\newcommand{\dd}{\mathrm{d}}
\DeclareMathOperator{\ii}{i}

\newcommand{\C}{\mathbb{C}}

\renewcommand\Re{\operatorname{Re}}
\renewcommand\Im{\operatorname{Im}}
\newcommand{\eps}{\varepsilon}

\newcommand{\R}{\mathbb{R}}
\newcommand{\N}{\mathbb{N}}
\newcommand{\Z}{\mathbb{Z}}

\def\R{\mathbb{R}}

\newcommand{\LP}{\mathcal{LP}}

\theoremstyle{plain}
\newtheorem{theorem}{Theorem}[section]

\newtheorem{lemma}[theorem]{Lemma}
\newtheorem{corollary}[theorem]{Corollary}

\newtheorem{proposition}[theorem]{Proposition}

\theoremstyle{definition}
\newtheorem{definition}[theorem]{Definition} 
\newtheorem{assumption}{Assumption}

\newtheorem{problem}[theorem]{Problem}

\newtheorem{question}[theorem]{Question}

\theoremstyle{remark}
\newtheorem{remark}[theorem]{Remark}
\newtheorem{example}[theorem]{Example}

\numberwithin{equation}{section}

\begin{document}
	
	\title{P\'olya--Schur problems and free probability}
	
		\author{Andrew Campbell}
		\address[Andrew Campbell]{Institute of Science and Technology Austria, Am Campus 1, 3400 Klosterneuburg, Austria}
		\email[Andrew Campbell]{andrew.campbell@ist.ac.at}
		\thanks{AC is supported by the Austrian Science Fund (FWF) [DOI: 10.55776/ESP4314224]. For open access purposes, the author has applied a CC BY public copyright license to any author accepted manuscript version arising from this submission.}
		
		\author{Jonas Jalowy}
		\address[Jonas Jalowy]{Paderborn University, Institute of Mathematics, Warburger Str. 100, 33098 Paderborn, Germany}
		\email[Jonas Jalowy]{jjalowy@math.uni-paderborn.de}
		\thanks{JJ is supported by the DFG priority program SPP 2265 Random Geometric Systems.}
		
\subjclass[2020]{30C15, 31A35, 60B10, 41A60} 

\keywords{Appell polynomials, Jensen polynomials, empirical zero distribution, free probability, free infinitely divisible distributions, saddle point method, P\'olya--Schur/Benz differential operators, Laguerre--P\'olya class, Riemann $\Xi$-function} 
	\begin{abstract}
	In this work, we build a bridge between the P\'olya--Schur program and Voiculescu's free probability theory. A cornerstone of the former is the P\'olya--Benz Theorem, classifying a central family of real-root preserving operators on the space of polynomials, as those given by $f(\partial_z)$ for a Laguerre--P\'olya function $f$ and the derivative operator $\partial_{z}$. We prove that any free (additive) infinitely divisible distribution can be attained as the weak limit of root distributions of Appell polynomials $f_n(\partial_z)z^n$ as $n\to\infty$, for a suitably chosen sequence $f_n$ of Laguerre--P\'olya functions. 
	
	Such questions on the (global) limiting distributions of real rooted polynomials belong to the active research area of finite free probability.  In contrast to its standard tools, our approach allows for  non-compactly supported limiting distributions,  (barely) complex rooted polynomials and even provides the full microscopic description of the roots. Moreover, we extend our results to differential operators generating free multiplicative infinitely divisible distributions, to the rectangular free convolution, and to $f_n(\partial_z)p_n$ for real rooted polynomials $p_n$, implying a generalization of the recent connections between the heat flow and free Brownian motion to any free L\'evy process. 
	
	As corollaries, we identify free stable distributions by choosing $f_n$ to be a fixed rescaled Laguerre--P\'olya function, and we prove various convergence results on the zero distributions of Jensen polynomials, e.g.\ the limiting root distribution of Jensen polynomial of the Riemann $\Xi$-function is given by the Cauchy distribution. 
	\end{abstract}

	\maketitle

	\section{Introduction}
	\subsection{The Laguerre--P\'olya class}\label{subsec:LP class}
	Let $f$ be a real entire function of order less than or equal to $2$, of the form \begin{equation}\label{eq:func def}
		f(z)=Cz^{m}e^{-cz-\frac{\sigma^2}{2}z^2}\prod_{j=1}^{\infty}\left(1-\alpha_{j}z\right)e^{\alpha_{j}z}=\sum_{k=0}^{\infty}\frac{\gamma_{k}}{k!}z^k,
	\end{equation} where $c,\sigma\in\R$ and $ \{\alpha_{j}^{-1} \}_{j\geq1}\subset\R\times \ii[-M,M]$ for some $M\geq 0$ such that $\sum_j |\alpha_j|^2<\infty$, so that the infinite product is absolutely convergent. For any $M\geq0$ denote by $\LP_{M}$ the set of all such functions, and let $\LP_{\infty}:=\bigcup_{M\geq 0}\LP_{M}$. The set $\LP:=\LP_{0}$ is known as the \emph{Laguerre--P\'olya class} and it is the closure of hyperbolic polynomials under the topology of uniform convergence on compact subsets.   If $\sigma=0$, $\alpha_1,\alpha_{2},\dots\in[0,\infty)$, $\sum_{j}\alpha_{j}<\infty$, and $c\geq\sum_{j}\alpha_{j}$, then $f$ is in the \emph{Laguerre--P\'olya class of Type $\mathcal I$}, or simply $f\in\mathcal{LPI}$. Functions in $\mathcal{LPI}$ are those which are the uniform limits of polynomials with only non-negative roots.
	
	For a function $f\in\LP_{M}$, we consider two different infinite order differential operators, which we refer to as \emph{P\'olya--Benz type operator} \begin{equation}\label{eq:Polya-Benz type operator}
		f(\partial_{z})=\sum_{k=0}^{\infty}\frac{\gamma_{k}}{k!}\partial_{z}^{k},
	\end{equation} 
	and \emph{P\'olya--Schur type operator}
	\begin{equation}\label{eq:Polya-Schur type operators}
	f(z\partial_{z})=\sum_{k=0}^{\infty}\frac{\gamma_{k}}{k!}(z\partial_{z})^{k}.
	\end{equation} 
	We will study how the roots of $f(t\partial_z)p(z)$ and $f(tz\partial_{z})p(z)$ evolve for a polynomial $p$ and $t>0$.  These infinite differential operators and their effect on roots of polynomials and entire functions have been considered by various authors \cite{Craven-Csordas1994,Aleman-Beliaev-Hedenmalm2004,Kim--Kim2016,Cardon--deGaston2005} in the context of the \emph{P\'olya--Schur program}, which aims to understand linear operators which preserve geometric features of roots, more details in \S \ref{sec:polyaschur}. From this perspective, our work not only asks \emph{where} the zeros are distributed, but also \emph{how} they are distributed in the large degree limit.
	The names for the operators \eqref{eq:Polya-Benz type operator} and \eqref{eq:Polya-Schur type operators} are motivated by two classical results: The P\'olya--Benz Theorem (Proposition \ref{prop:Polya-Benz}) and P\'olya--Schur Theorem (Proposition \ref{prop:Polya-Schur}, see also \cite{Aleman-Beliaev-Hedenmalm2004}). 
 
	Let $\mathcal{P}(\C)$ denote be the space of polynomials with complex coefficients. We call $p\in\mathcal P(\C)$ \emph{real-rooted} if all its roots are real and say a linear operator $T:\mathcal{P}(\C)\rightarrow\mathcal{P}(\C)$ preserves real-rootedness if $T[p]$ is real-rooted for any real-rooted polynomial $p\in\mathcal P (\C)$.
	\begin{proposition}[P\'olya--Benz Theorem]\label{prop:Polya-Benz}
	Let $T:\mathcal{P}(\C)\rightarrow\mathcal{P}(\C)$ be a linear operator which commutes with $\partial_{z}$. Then, $T$ preserves real-rootedness if and only if $T=f(\partial_z)$ for some $f\in\LP$. 
	\end{proposition}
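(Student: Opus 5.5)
The proof splits into the two implications. I treat the ``if'' direction first, since it is the standard one and supplies tools for the converse.

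\emph{Sufficiency.} Let $f\in\LP$ and let $p$ be real-rooted of degree $n$. Then $f(\partial_z)p=\sum_{k\le n}\frac{\gamma_k}{k!}p^{(k)}$ is a finite sum, so it depends only on $\gamma_0,\dots,\gamma_n$. By the description of $\LP$ as the closure of hyperbolic polynomials, there are real-rooted polynomials $q_m\to f$ locally uniformly. Their Taylor coefficients then converge, and hence $q_m(\partial_z)p\to f(\partial_z)p$ coefficientwise. It therefore suffices to treat a real-rooted polynomial $q$. Factor $q(w)=C\prod_j(w-a_j)$ with $a_j\in\R$ (constants are trivial). This reduces the claim to a single factor $(\partial_z-a)$ with $a$ real. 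If $a=0$, Rolle's theorem gives that $p'$ is real-rooted. If $a\neq0$, write $(\partial_z-a)p=e^{az}\partial_z\big(e^{-az}p\big)$. The function $e^{-az}p$ has $n$ real zeros counted with multiplicity, and it tends to $0$ at one end of $\R$. Rolle's theorem therefore gives $n-1$ zeros between consecutive roots plus one extra zero beyond the extreme root. So the degree-$n$ polynomial $p'-ap$ is real-rooted. By Hurwitz's theorem, real-rootedness survives the coefficientwise limit, since the limit has fixed degree or is identically zero.

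\emph{Necessity.} Let $T$ commute with $\partial_z$. Define the linear functional $L(p):=T[p](0)$. Commuting with $\partial_z$ forces $T[p](z)=L\big(p(\cdot+z)\big)$, because both sides agree after taking all derivatives at $0$. Set $\gamma_k:=L(z^k)$ and $f(w):=\sum_k\frac{\gamma_k}{k!}w^k$ as a formal series. Expanding $p(\cdot+z)$ by Taylor's formula gives $T=f(\partial_z)$ formally. It remains to show that $f$ is entire and lies in $\LP$. Apply $T$ to the real-rooted polynomials $(z+n)^n$ (after a suitable scaling of the variable) and use $T[(z+n)^n]=\sum_k\binom nk\gamma_k(z+n)^{n-k}$. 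Substituting $z=w\cdot n-n$, or more simply evaluating the reversed polynomial, shows that the Jensen-type polynomials
\[
g_n(x)=\sum_{k=0}^n\binom nk\frac{\gamma_k}{n^k}x^k
\]
are real-rooted. These arise from $T[(1+z/n)^n]$ read at $z$ after reversal, using $\partial_z^k(1+z/n)^n=\binom nk k!\,n^{-k}(1+z/n)^{n-k}$. The key classical step is then that $g_n(x)\to f(x)$ locally uniformly. Once this holds, $f\in\LP$ by the closure characterization.

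\emph{Main obstacle.} The hard step is proving this locally uniform convergence, which requires growth control on $\gamma_k$. I would use the Laguerre/Turán-type inequalities that follow from real-rootedness of $g_n$. Applying Newton's inequalities to the coefficients of $g_n$ yields $\gamma_k^2\ge c_{n,k}\gamma_{k-1}\gamma_{k+1}$ with $c_{n,k}\to1$. Combined with the sum-of-squares-of-roots bound $\sum x_j^{-2}\ge0$ applied to $g_n$, this gives a uniform bound of the form $|\gamma_k|/k!\le C R^k/k!^{1/2}$ or better. That bound yields normality of $\{g_n\}$ and hence the locally uniform convergence. A degenerate case also needs care: if $\gamma_0=0$, divide by the lowest nonzero power $x^m$ first.
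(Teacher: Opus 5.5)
Your sufficiency argument (approximating $f$ by real-rooted polynomials, Rolle applied to $e^{-az}p$, then Hurwitz) is fine. The reduction $T=f(\partial_z)$ via $L(p)=T[p](0)$ is also fine, and real-rootedness of $g_n$ is immediate: $g_n(x)=J_{n,f}(x/n)$ is just the reversal of $T[z^n]=A_{n,f}$, so you do not need $(1+z/n)^n$. The paper only cites this proposition, and says the ``only if'' part of its appendix proof of Lemma \ref{lem:PS for rect} adapts to it.

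\textbf{The gap.} It sits exactly at the step you flag as the main obstacle: the tools you name cannot produce the growth bound. Since $g_n$ has coefficients $\binom nk\gamma_k n^{-k}$, Newton's inequalities for $g_n$ reduce exactly to $\gamma_k^2\ge\gamma_{k-1}\gamma_{k+1}$, and these are sign-blind. Normalizing $\gamma_0=1$, nonnegativity of $\sum_j x_j^{-2}=\gamma_1^2-(1-\frac1n)\gamma_2$ is the single inequality $\gamma_2\le\frac{n}{n-1}\gamma_1^2$. The sequence $\gamma_{2m+1}=0$, $\gamma_{2m}=(-1)^mM_m$ with arbitrary $M_m>0$ satisfies all of these, yet need not define an entire function. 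So no bound $|\gamma_k|/k!\le CR^k/\sqrt{k!}$ follows from them.

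\textbf{The missing idea.} What matters is not the sign of $\sum_j x_j^{-2}$ but that it is bounded uniformly in $n$, by $\gamma_1^2+|\gamma_2|$, and that this second power sum controls the whole polynomial. Two ways to use it:
\begin{enumerate}
\item Product bound. From $|1-w|\le e^{-\Re w+|w|^2/2}$ and $g_n(x)=\prod_j(1-x/x_j)$ you get $|g_n(x)|\le\exp\big(|\gamma_1||x|+\tfrac12(\gamma_1^2+|\gamma_2|)|x|^2\big)$. So $\{g_n\}$ is locally bounded, and Montel together with the coefficientwise convergence $\binom nk n^{-k}\to1/k!$ gives $g_n\to f$ locally uniformly.
\item AM--GM, the paper's route. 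In its appendix the roots are nonnegative, and plain AM--GM bounds the product of the $k$ roots of $T[z^k]$ (essentially $t_k$) by a power of their sum (essentially $t_1$). For real roots, apply AM--GM to the squares of the $k$ roots of $T[z^k]$. Their product is $\pm\gamma_k$ and the sum of squares is $k^2\gamma_1^2-k(k-1)\gamma_2$, which gives $|\gamma_k|\le\big(k(\gamma_1^2+|\gamma_2|)\big)^{k/2}$. Since $\binom nk n^{-k}\le 1/k!$, dominated convergence then gives both that $f$ is entire and that $g_n\to f$.
\end{enumerate}
Either way, the closure characterization finishes as you intended. In the degenerate case, replace $\gamma_0$ by the first nonzero coefficient.
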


\subsection{Appell and Jensen polynomials}
	For any $f\in\LP_{\infty}$, we define the \emph{Appell polynomials} of $f$ by \begin{equation}\label{eq:Appell def}
		A_{n,f}(z)=f(\partial_{z})z^{n}=\sum_{k=0}^{\infty}\frac{\gamma_{k}}{k!}\partial_{z}^{k}z^{n},
	\end{equation} where we note the right hand side is a terminating series. 
	
	The  \emph{Jensen polynomials}, $J_{n,f}$, of $f$ are the reciprocal polynomial of $A_{n,f}$ given by\begin{equation}\label{eq:J and A inversion}
	J_{n,f}(z)=z^{n}A_{n,f}\left(\frac{1}{z}\right).
\end{equation} Given the power series expansion of $f$ in \eqref{eq:func def}, $A_{n,f}$ and $J_{n,f}$ have the simple formulas \begin{equation}\label{eq:formulas for A and J}
	A_{n,f}(z)=\sum_{k=0}^{n}\gamma_{k}\binom{n}{k}z^{n-k},\text{ and }
	J_{n,f}(z)=\sum_{k=0}^{n}\gamma_{k}\binom{n}{k}z^{k}.
\end{equation} A central feature of Appell sequences is the differential identity $A_{n,f}'=nA_{n-1,f}$. The exponential generating function of the Appell polynomials is given by \begin{equation}\label{eq:generating funciton}
f(z)e^{xz}=\sum_{k=0}^{\infty}A_{k,f}(x)\frac{z^k}{k!}.
\end{equation} We note that another common way to associate an Appell sequence to some analytic function $g$ is to through \eqref{eq:generating funciton} with $f(z)=\frac{1}{g(z)}$, see \cite{boyer2010appell,buric2016appell,salas2020appell,Anshelevich2004,Anshelevich2009,Anshelevich2009part3}.

\begin{proposition}\label{prop:Jensen polynomial convergence}
Let $f\in\LP_{\infty}$. Then, $f\in\LP$ if and only if $A_{n,f}$ has only real roots for every $n\geq 0$. Moreover,  uniformly on compact subsets, \begin{equation}
	\lim\limits_{n\rightarrow\infty}J_{n,f}\left(\frac{z}{n}\right)=f(z).
\end{equation}
\end{proposition}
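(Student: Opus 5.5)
The plan is to prove the convergence statement first, since it is self-contained, and then to use it for the reverse implication of the characterization; the forward implication will come from the P\'olya--Benz Theorem.

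\emph{Convergence.} By \eqref{eq:formulas for A and J},
\[
J_{n,f}\left(\frac{z}{n}\right)=\sum_{k=0}^{n}\gamma_k\binom{n}{k}\frac{z^k}{n^k}=\sum_{k=0}^{n}\frac{\gamma_k}{k!}z^k\prod_{j=0}^{k-1}\left(1-\frac{j}{n}\right).
\]
Each product lies in $[0,1]$ and tends to $1$ as $n\to\infty$ for fixed $k$. On $|z|\le R$ the $k$-th term is therefore bounded by $\frac{|\gamma_k|}{k!}R^k$. This bound is summable because $f$ is entire, and every function in $\LP_\infty$ is entire by the convergence of the canonical product, which holds since $\sum_j|\alpha_j|^2<\infty$. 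Dominated convergence for series (Tannery's theorem) then gives the uniform limit $f(z)$ on compact sets.

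\emph{Forward implication.} If $f\in\LP$, then $f(\partial_z)$ commutes with $\partial_z$. By Proposition \ref{prop:Polya-Benz} it preserves real-rootedness, and $z^n$ is real-rooted, so $A_{n,f}=f(\partial_z)z^n$ is real-rooted. The only point to check is that Proposition \ref{prop:Polya-Benz} applies to the operator defined by the power series of $f$; this holds because on polynomials the series terminates.

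\emph{Reverse implication.} Suppose every $A_{n,f}$ has only real roots. The reciprocal polynomial $J_{n,f}$ then has real roots together with possibly a root at infinity, so after the drop in degree when $\gamma_n=0$, $J_{n,f}$ is a real polynomial with only real zeros. Since $f$ is a real entire function, the $\gamma_k$ are real. Hence $J_{n,f}(z/n)$ are hyperbolic polynomials converging locally uniformly to $f$. Because $\LP$ is the closure of the hyperbolic polynomials under this topology, as recalled in \S\ref{subsec:LP class}, it follows that $f\in\LP$.

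\emph{Main obstacle.} The delicate step is this last one. One must make sure that the closure statement is applied correctly: hyperbolic polynomials with real coefficients, including the degenerate case $J\equiv$ const or $f\equiv0$, which is trivial. If one wants to avoid citing the closure characterization, an alternative is Hurwitz's theorem. The limit $f$ has no nonreal zeros, since otherwise nearby approximants would have nonreal zeros. Combined with the assumed form \eqref{eq:func def}, this forces all $\alpha_j^{-1}$ to be real, i.e.\ $M=0$. Conveniently, membership in $\LP_\infty$ already fixes the exponential factor (order $\le 2$, and $\sigma$ real giving $e^{-\sigma^2z^2/2}$ with the correct sign), so Hurwitz alone suffices to conclude $f\in\LP$.
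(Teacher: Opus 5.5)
Your argument is correct. The paper does not prove Proposition~\ref{prop:Jensen polynomial convergence}: it records it as classical background from the P\'olya--Schur program. So your write-up supplies a proof rather than reproducing one.

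\textbf{Convergence and forward implication.} The convergence step is sound: you write $\binom{n}{k}n^{-k}=\frac{1}{k!}\prod_{j<k}(1-j/n)$ and dominate the $k$-th term by $|\gamma_k|R^k/k!$, which is summable because every element of $\LP_\infty$ is entire. The forward implication via the ``if'' half of Proposition~\ref{prop:Polya-Benz} is also fine.

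\textbf{Reverse implication.} Of your two routes, the Hurwitz one is the better fit here. Since $f\in\LP_\infty$ is assumed, the real parameters $c,\sigma$, the sign of the Gaussian factor and the genus of the product are already fixed. The only thing separating $\LP_M$ from $\LP_0$ is the location of the zeros $\alpha_j^{-1}$. Hurwitz's theorem applied to the real-rooted approximants $J_{n,f}(z/n)\to f$ excludes non-real zeros of $f$ (the case $f\equiv 0$ is trivial). This puts all $\alpha_j^{-1}$ on $\R$, and you never need the harder half of the Laguerre--P\'olya closure theorem.

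\textbf{Relation to the paper.} Your closure-theorem route is exactly the pattern the paper uses in the analogous ``only if'' argument for Lemma~\ref{lem:PS for rect} in the appendix. There the rescaled reciprocal polynomials converge to a limit function, and membership in $\mathcal{LPI}$ is read off from the closure characterization. In that setting the closure route is genuinely needed, because the limit has no a priori canonical-product representation. In your setting the hypothesis $f\in\LP_\infty$ makes the lighter Hurwitz argument suffice.
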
 

\subsection{Free probability}\label{subsec:free prob}

Voiculescu's free probability \cite{Voiculescu1986} is a non-commutative probability theory motivated by problems in operator algebras, and has since found great successes in a number of fields including random matrix theory and more recently the study of polynomial roots. 

\begin{definition}[Following \cite{Mingo-Speicher2017}]\label{def:R transform definition}
Let $\mu$ be a probability measure on $\R$. The \emph{Cauchy transform} $G_{\mu}:\C_{+}\to\C_-$ of $\mu$ is the analytic function 
\begin{equation}
		G_{\mu}(z):=\int_{\R}\frac{1}{z-t}\dd\mu(t).\label{eq:Cauchy}
	\end{equation} 
	 For $\alpha,\beta>0$, set $\Gamma_{\alpha,\beta}=\{z:\alpha\Im(z)>|\Re(z)|,\ \Im(z)>\beta \}$, and $\Delta_{\alpha,\beta}=\{z:z^{-1}\in\Gamma_{\alpha,\beta}\}$.
	 For every $\alpha>0$ there exists $\beta>0$ and the so-called $R$-\emph{transform} $R_{\mu}\colon\Delta_{\alpha,\beta}\rightarrow\C$ satisfying \begin{equation}\label{eq:Cauchy R relationship}
		G_\mu\left(R_{\mu}(z)+\frac{1}{z}\right)=z,\text{ for all }z\in\Delta_{\alpha,\beta},
	\end{equation}  and \begin{equation*}
		R_{\mu}\left(G_{\mu}(z)\right)+\frac{1}{G_{\mu}(z)}=z,\text{ for all }z\in\Gamma_{\alpha,\beta}.
	\end{equation*} 
\end{definition} 
With these tools at hand, it is possible to analytically introduce the following central concept of free probability.
\begin{definition}\label{def:free conv}
	For probability measures $\mu$ and $\nu$ on $\R$, the \emph{free additive convolution} $\mu\boxplus\nu$ is the unique probability measure such that $R_{\mu\boxplus\nu}=R_{\mu}+R_{\nu}$. 
	A probability measure $\mu$ is $\boxplus$\emph{-infinitely divisible} (or $\boxplus$-ID) if for any $n\in\N$, there exists a probability measure $\mu_n$ such that $\mu_{n}^{\boxplus n}=\mu$.
\end{definition}

 There exists the following classification of $\boxplus$-ID distributions, which serves as a \emph{free L\'evy--Khintchine formula}. \begin{proposition}[See \cite{Bercovici-Voiculescu1993}, Theorem 5.10 or \cite{Bercovici-Pata1999}, Theorem 3.2]\label{rthm:ID rep}
	A probability measure $\mu$ on $\R$ is $\boxplus$-ID if and only if $R_{\mu}$ can be analytically extended to the lower half-plane and there exists a positive finite Borel measure $\Sigma$ on $\R$ and a real number $c$ such that 
	\begin{equation}\label{eq:free levy-khin}
		R_{\mu}(u)=c+\int_{\R}\frac{u+x}{1-xu}\dd\Sigma(x).
	\end{equation}
\end{proposition}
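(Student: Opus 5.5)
This is the free L\'evy--Khintchine formula of Bercovici--Voiculescu, and we follow their route: an analytic characterization of Cauchy transforms combined with the limit theory for infinitesimal triangular arrays. Throughout we use the reciprocal Cauchy transform $F_\mu=1/G_\mu$ and the Voiculescu transform $\varphi_\mu(z)=F_\mu^{-1}(z)-z$. Comparing with \eqref{eq:Cauchy R relationship}, these are related by $\varphi_\mu(z)=R_\mu(1/z)$. The statement is then equivalent to the claim that $\varphi_\mu$ extends analytically to $\C_+$ with values in $\C_-\cup\R$. Indeed, the Nevanlinna representation of such a function is
\[
\varphi(z)=c+\int_\R\frac{1+xz}{z-x}\dd\Sigma(x),
\]
with $\Sigma$ positive and finite. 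Substituting $z=1/u$ gives exactly \eqref{eq:free levy-khin}, and $u\in\C_-$ corresponds to $z\in\C_+$.

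\emph{Sufficiency.} Suppose $R_\mu$ has the form \eqref{eq:free levy-khin} on $\C_-$. For each $n$, set $\varphi_n=\varphi/n$. This is again a Nevanlinna-type function of the same form, with data $(c/n,\Sigma/n)$. It therefore suffices to show that every function $\varphi$ of this form is the Voiculescu transform of some probability measure.

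To do this, define $H(z)=z+\varphi(z)$ on $\C_+$. Since $\varphi(iy)=o(y)$, we have $\Im H(z)\le \Im z$, and one checks that $H$ is injective on a suitable domain. Its inverse $F$ should be a self-map of $\C_+$ with $F(iy)/iy\to1$, and then Nevanlinna's theorem identifies $F$ as $F_\nu$ for a probability measure $\nu$. One way to obtain $F$ is as the limit of Denjoy--Wolff iterations of $w\mapsto z-\varphi(w)$; another is to approximate $\Sigma$ by finitely supported measures and use that the corresponding transforms are those of explicit free compound Poisson measures. By uniqueness $\nu=\mu$ when $n=1$, and for general $n$ this produces a $\mu_n$ with $\mu_n^{\boxplus n}=\mu$, since $\varphi$ is additive under $\boxplus$.

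\emph{Necessity.} Suppose $\mu=\mu_n^{\boxplus n}$ for every $n$. Then $\varphi_\mu=n\varphi_{\mu_n}$ on a common truncated cone $\Gamma_{\alpha,\beta}$, and the $\mu_n$ converge weakly to $\delta_0$, so they form an infinitesimal family. For infinitesimal $\nu$ we have
\[
\varphi_\nu(z)\approx z\bigl(z-F_\nu(z)\bigr)/z=\int\frac{zt}{z-t}\dd\nu(t)
\]
up to errors that are uniformly small relative to the main term on the cone. Hence
\[
\varphi_\mu(z)=\lim_n n\int\frac{zt}{z-t}\dd\mu_n(t)
=\lim_n\Bigl(c_n+\int\frac{1+xz}{z-x}\dd\Sigma_n(x)\Bigr),
\]
where $\dd\Sigma_n=n\,\frac{x^2}{1+x^2}\dd\mu_n$ and $c_n=n\int\frac{x}{1+x^2}\dd\mu_n$.

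Convergence of the left side at the points $z=iy$ forces $\{\Sigma_n\}$ to be tight with bounded mass and $\{c_n\}$ to be bounded. Passing to a subsequence, $\Sigma_n\to\Sigma$ and $c_n\to c$, which gives the representation on the cone. The right side of this representation is analytic on all of $\C_+$ and is Nevanlinna-valued, and by analytic continuation this yields the extension of $R_\mu$ to $\C_-$.

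The main obstacle is the infinitesimal approximation estimate $\varphi_{\mu_n}(z)=(1+o(1))\,(z-F_{\mu_n}(z))$, uniformly on a fixed cone $\Gamma_{\alpha,\beta}$. This requires controlling the domain of the inverse $F_{\mu_n}^{-1}$ uniformly in $n$. We would do this via the Bercovici--Voiculescu lemma that $F_\nu$ is univalent on $\Gamma_{\alpha,\beta}$ with image containing $\Gamma_{\alpha',\beta'}$, combined with Rouché's theorem, applied once $\nu$ is sufficiently close to $\delta_0$.
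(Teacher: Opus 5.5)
The paper does not prove this proposition; it quotes it from Bercovici--Voiculescu and Bercovici--Pata. Your outline (pass to $\varphi_\mu(z)=R_\mu(1/z)$, use the Nevanlinna representation, construct $F$ for sufficiency, use infinitesimal arrays for necessity) follows the route of those references, and the sufficiency sketch is acceptable. There are two small slips. First, the Nevanlinna representation of a map $\C_+\to\C_-\cup\R$ also carries a term $-bz$ with $b\ge0$, and this term vanishes only because $\varphi_\mu(\ii y)=o(y)$. Second, your displayed approximation should read $z-F_\nu(z)=z(zG_\nu(z)-1)/(zG_\nu(z))$, where $z(zG_\nu(z)-1)=\int\frac{zt}{z-t}\dd\nu(t)$.

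The genuine gap is the first step of the necessity direction, which is circular as written. You assert two things without argument: that $\varphi_\mu=n\varphi_{\mu_n}$ on a cone independent of $n$, and that $\mu_n\Rightarrow\delta_0$. A priori, $\varphi_{\mu_n}$ is only defined on some $\Gamma_{\alpha,\beta_n}$, and $\beta_n$ might tend to $\infty$. The identity $\varphi_{\mu_n}=\varphi_\mu/n$ gives $\mu_n\Rightarrow\delta_0$ only once a common cone is known. Your remedy for the uniform domain (univalence of $F_\nu$ plus Rouch\'e, ``once $\nu$ is close to $\delta_0$'') in turn presupposes infinitesimality. The classical device $\hat\mu_n=\hat\mu^{1/n}$ has no direct counterpart here, precisely because the domain of $\varphi_{\mu_n}$ is not controlled.

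The missing idea is analytic continuation. For fixed $n$, the identity $F_{\mu_n}(z+\varphi_\mu(z)/n)=z$ holds on $\Gamma_{\alpha,\beta_n}$. Both sides are analytic on the convex cone $\Gamma_{\alpha,\beta_1}$ on which $|\varphi_\mu(z)|\le\frac12\Im z$, so the identity holds there for every $n$. Now write $F_{\mu_n}(w)=w+a_n+\int\frac{1+tw}{t-w}\dd\rho_n(t)$, with $a_n\in\R$ and $\rho_n\ge0$ finite, and set $w_n(z)=z+\varphi_\mu(z)/n$. This gives the exact formula
\[
\varphi_\mu(z)=-na_n+\int_\R\frac{1+t\,w_n(z)}{w_n(z)-t}\,n\,\dd\rho_n(t),\qquad z\in\Gamma_{\alpha,\beta_1}.
\]
From this formula the argument closes as follows.
\begin{enumerate}
\item Taking imaginary and real parts at $z=\ii y$ bounds $na_n$ and $n\rho_n(\R)$; in particular $F_{\mu_n}\to\mathrm{id}$, so $\mu_n\Rightarrow\delta_0$.
\item The condition $\varphi_\mu(\ii y)=o(y)$ makes $\{n\rho_n\}$ tight.
\item Any subsequential limit $(na_n,n\rho_n)\to(-c,\Sigma)$ yields $\varphi_\mu(z)=c+\int\frac{1+xz}{z-x}\dd\Sigma(x)$ on the cone.
\end{enumerate}
This closes the gap, and it also makes unnecessary the approximation lemma that you single out as the main obstacle.
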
  We will refer to $\Sigma$ as the \emph{weighted L\'evy measure} of $\mu =\mu_{c,\Sigma}$ and $(c,\Sigma)$ as the \emph{L\'evy pair} of $\mu$.  
\begin{remark}\label{rem:weighted_levy}
	 The measure $\Sigma$ can be written in the form $\dd\Sigma(x)=\sigma^2\dd\delta_0(x)+\frac{x^2}{1+x^2}\dd\nu(x)$, where $\sigma\in\R$ and $\nu$ is a measure such that $\int_{\R}\min(x^2,1)\dd\nu(x)<\infty$ and $\nu(\{0\})=0$. It is common to refer to $(c,\sigma^2,\nu)$ as the \emph{L\'evy triple} of $\mu=\mu_{c,\sigma ^2,\nu}$, and $\nu$ as the \emph{L\'evy measure} of $\mu$.  
\end{remark}
An important subclass of $\boxplus$-ID distributions are the $\boxplus$-stable distributions.  A non-degenerate probability measure  $\mu$ is said to be $\boxplus$-stable with stability index $\alpha\in (0,2]$ if $\mu\boxplus \mu=\mathcal{D}_{2^{1/\alpha}}\mu\boxplus\delta_{b}$ for some $b\in\R$, where $\mathcal{D}_{2^{1/\alpha}}\mu$ is the dilation of $\mu$ by $2^{1/\alpha}$.  For $\alpha=2$, the stable law is a semicircle law with L\'evy pair $(c,\sigma^2\delta_0)$ and for $\alpha<2$ their weighted L\'evy measure has density
\begin{align}\label{eq:stable_levy_measure}
	\dd \Sigma_{\alpha,\theta}(x)=\alpha (\theta\oindicator{x>0}+(1-\theta)\oindicator{x<0} )\frac{|x|^{1-\alpha}}{1+x^2}\dd x 
\end{align}
for some $\theta\in[0,1]$.

The theory of \emph{finite free probability} studies zeros of real-rooted polynomials as the finite dimensional analogue of eigenvalue distributions in free probability theory, see \S \ref{sec:finite_free_probability} and \cite{Marcus-Spielman-Srivastava2015-1,Marcus-Spielman-Srivastava2015-2,Marcus-Spielman-Srivastava2022,Arizmendi-Perales2018,Jalowy-Kabluchko-Marynych2025} for more details. Using polynomial operations that mimic the free convolution, determining the limiting empirical root measures of $A_{n,f_n}$ is sufficient in order to determine the limiting empirical root measures of $f_n(\partial_{z})p_n(z)$ for any real--rooted $p_n$ and any $f_n\in\LP_{0}$. 
Recently, the backwards heat flow operator $e^{-\frac{t}{2n}\partial_z^2}p_n$ has been linked to the free convolution with the semicircle law and repeated differentiation $\partial_z^{\lfloor tn\rfloor}p_n$ has been linked to the free convolution semigroup, see for instance \cite{Arizmendi-GarzaVargas-Perales2023,Steinerberger2019,Steinerberger2020,Campbell-ORourke-Renfrew2024,Hall-Ho-Jalowy-Kabluchko2023heat,Hofert,Hoskins-Kabluchko2021,Hoskins-Steinerberger2022,campbell2025freeinfinitedivisibilityfractional,Kabluchko2022leeyang,Hall-Ho-Jalowy-Kabluchko2023}.
 
In this work, we unite and vastly generalize these previous results. 
\subsection{Summary of the results}

\begin{itemize}
	\item In our main result, we prove that any $\boxplus$-ID distribution $\mu_{c,\Sigma}$ can be obtained as the limiting empirical root measures of $f_n(\partial_z)z^n$ for some $f_n\in\mathcal{LP}$, whose reciprocal roots converge to $\Sigma$ after suitable rescaling (see Assumption \ref{assump:main assumption} and Theorem \ref{thm:main result, general}).
	\item These results continue to hold if the roots of $f_n$ are not too far from $\R$, even though $A_{f,n}$ has complex roots. We consider this as a step towards ``weakly non-Hermitian'' finite free probability.
	\item If the sequence functions is given by $f_n(z)=e^{-c_n{z}}f(b_nz)$ for a fixed function $f\in\LP_M$, then the limiting empirical measure is the spectral measure of a free \emph{stable} L\'evy process determined by the growth of the roots of $f$ at infinity (Corollary \ref{cor:fixed f and stable laws}). 
	\item As another corollary, we show that the law $\mu\boxplus {\mu_{c,\Sigma}}^{\boxplus t}$ of a \emph{free additive L\'evy process} starting from the distribution $\mu$ can be realized as the limiting empirical root measure of $f_{tn}(t\partial_{z})p_n(z)$ for some sequence of polynomials $p_n$ (Corollary \ref{cor:Free convolution theorem}).
	
	\item  We derive analogues for the \emph{multiplicative free convolution $\boxtimes$} and P\'olya--Schur type operators $f_n(z\partial_z)$, for more general operators $T_n$ commuting with $z\partial_z$, and for \emph{rectangular differentiation} $z\partial_{z}^2+(1+\beta)\partial_{z}$. 
	(Theorems \ref{thm:Polya schur and multiplicative groups}, \ref{thm:Polya-Schur full version} and \ref{thm:rectangular Appell polynomials}).
	
	\item  Our approach enables us to determine the microscopic structure of the roots of $f(\partial_{z})z^n$ in the bulk, whose lattice spacing is determined by the density of the limit $\mu_{c,\Sigma}$ and which generalizes the Plancherel–Rotach asymptotics for Hermite polynomials (Theorem \ref{thm:local spacing}).
	
	\item 
	The proof of Theorem \ref{thm:main result, general} relies on a novel combination of the saddle point method and insights from free probability, without tools from finite free probability (apart from some corollaries). We believe that this approach is of independent interest in order to remove typical restrictions of finite free probability, such as compact supports, real-rootedness and the lack of microscopic description.
\end{itemize} 	

The remainder of the paper is organized as follows. After fixing the notation, we rigorously present our main results in Section \ref{sec:mainresults}, first on P\'olya--Benz type operators then on P\'olya--Schur type operators and its rectangular sibling. We will provide an outline of the proofs in Section \ref{sec:outline_of_proof}. Section \ref{sec:Background} is devoted to a more detailed discussion of necessary concepts from (finite) free probability, the P\'olya--Schur program and related works. In Section \ref{sec:proof}, we prove our main result Theorem \ref{thm:main result, general}. Section \ref{sec:cors} presents proofs of the corollaries and Section \ref{sec:Polyaschur} deals with the P\'olya--Schur case. 



\subsection{Notation}
	For any polynomial $p$  of (formal) degree $n$  we define its \emph{empirical root distribution} $\llbracket p\rrbracket$ as \begin{equation}\label{eq:ERM}
		\llbracket p\rrbracket=\frac{1}{n}\sum_{z:p(z)=0}\delta_{z}.
	\end{equation} For $a\in\R$ we define the \emph{dilation} operator $\mathcal{D}_{a}$ on polynomials by \begin{equation}
	\mathcal{D}_{a}p(x)=a^{\deg(p)}p\left(\frac{x}{a}\right).
\end{equation} This operator dilates the roots of $p$ by a factor of $a$ while preserving the leading coefficient. We use the same notation to define the operator which dilates the support of a measure by a factor of $a$, i.e.\ \begin{equation}
\mathcal{D}_{a}\mu(B)=\mu\left(\frac{B}{a}\right).
\end{equation} 


We distinguish two types of convergences of measures by denoting $\Rightarrow$ for weak convergence and $\rightarrow$ for vague convergence of measures.

We will use the notation $\C_-=\{u\in\C: \Im u<0\}$ and $\C_{+}=\{u\in\C:\Im u>0 \}$ to denote the lower and upper half-planes, respectively, as well as $\R_-=(-\infty,0)$, $\R_+=(0,\infty)$. 
For an analytic function $g$ on the upper half-plane and $z\in\R$, we will use the notation $g(z+\ii0)$ to denote the continuous extension to the real line, when such an extension exists. 

We will use the notation $O(\cdot), o(\cdot),$ throughout as well as $b_n\sim c_n$ to denote asymptotic equivalence ${b_n}={c_n} (1+o(1))$ as $n\rightarrow\infty$.

\section{Main results}\label{sec:mainresults}
We will assume the following condition on a sequence of functions in $\LP_{\infty}$ for all our results. 
\begin{assumption} \label{assump:main assumption}
	Let $0\leq M_n=o(n)$, and let $\{f_{n}\}_{n=1}^{\infty}$ be such that $f_n\in \LP_{M_n}$ and of the form \begin{equation}
		f_n(z)=e^{-c_{n}z-\frac{\sigma_{n}^2}{2}z^2}\prod_{j=1}^{\infty}\left(1-\alpha_{j,n}z\right)e^{\alpha_{j,n}z}.
	\end{equation} 
	We say the sequence $\{f_n\}$ satisfies Assumption \ref{assump:main assumption} if the finite complex valued measures
	\begin{equation}\label{eq:Sigma_n assumptions}
		\Sigma_{n}:=n\sigma_{n}^2\delta_{0}+\frac{1}{n}\sum_{j=1}^{\infty}\frac{n^2\alpha_{j,n}^2}{n^2\alpha_{j,n}^2+1}\delta_{n\alpha_{j,n}} \to \Sigma
	\end{equation}
	converge vaguely to some positive weighted L\'evy measure $\Sigma$ as $n\to\infty$, and if 
	\begin{equation}\label{eq:c assumptions} \lim\limits_{n\rightarrow\infty}c_{n}-\int t\dd\Sigma_{n}(t)=c.\end{equation} 
	\end{assumption}

   Since the L\'evy pair is continuous (as real numbers and vaguely as finite positive measures) with respect to weak convergence of probability measures, Assumption \ref{assump:main assumption} is a continuity statement about a sequence of $\boxplus$-ID measures in the case when $f_{n}\in\LP$ and $\Sigma_n$ is a positive measure. Allowing for complex roots creates some technical obstacles in our proofs, which $\Sigma_{n}$ being a complex measure is a relatively minor one. By the Hahn--Jordan decomposition this complex measure can be written as a linear combination of four finite positive measures \begin{equation*}
   	\Sigma_n=\Sigma_{n,+}-\Sigma_{n,-}+\ii\Sigma_{n,+\ii}-\ii\Sigma_{n,-\ii}.
   \end{equation*} Implicit in \eqref{eq:Sigma_n assumptions} is that three of these measures converge vaguely to the zero measure. 
   Assumption~\ref{assump:main assumption} can easily be rephrased in terms of the triple $c_{n}, \sigma_{n}$ and the positive infinite measure $\nu_n:=\frac{1}{n}\sum_{j}\delta_{n\alpha_{j}}$ on $\C$ to avoid complex valued measures entirely. However, it turns out that our choice of working with \eqref{eq:Sigma_n assumptions} and \eqref{eq:c assumptions} will keep the proofs cleaner and less technical. 
   
   Assumption \ref{assump:main assumption} should be viewed as an optimal condition for our results and we will explicitly discuss necessity in Remark \ref{rem:optimality} below.



\subsection{Results for P\'olya--Benz type operators} We will begin with $\llbracket A_{n,f_n}\rrbracket$ when $f_{n}(0)\neq 0$. 
\begin{theorem}\label{thm:main result, general}
If $\{f_{n}\}$ satisfy Assumption \ref{assump:main assumption}, then weakly as probability measures  \begin{equation}
	\llbracket A_{n,f_n}\rrbracket\Rightarrow \mu_{c,\Sigma},
\end{equation} 
where $\mu_{c,\Sigma}$ is the $\boxplus$-infinitely divisible distribution with L\'evy pair $(c,\Sigma)$. 
\end{theorem}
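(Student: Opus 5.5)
The plan is to prove convergence of Cauchy transforms, or equivalently of log-potentials, via a saddle point analysis of a contour integral representation of $A_{n,f_n}$. By the generating function \eqref{eq:generating funciton}, Cauchy's formula gives
\begin{equation*}
A_{n,f_n}(nx)=\frac{n!}{2\pi\ii}\oint f_n(w)e^{nxw}\frac{\dd w}{w^{n+1}}=\frac{n!\,n^{n}}{2\pi\ii}\oint \exp\bigl(n\,S_n(u;x)\bigr)\frac{\dd u}{u},
\end{equation*}
where we substitute $w=nu$ (so the natural scale of the roots of $A_{n,f_n}$ is $O(1)$ after the normalisation built into $\Sigma_n$) and set
\begin{equation*}
S_n(u;x)=xu-\log u+\frac1n\log f_n(nu).
\end{equation*}
Expanding $\log f_n$ with the product formula,
\begin{equation*}
\frac1n\log f_n(nu)=-c_nu-\frac{n\sigma_n^2}{2}u^2+\frac1n\sum_j\bigl(\log(1-n\alpha_{j,n}u)+n\alpha_{j,n}u\bigr).
\end{equation*}
The first key step is to check that under Assumption~\ref{assump:main assumption} the derivative $\partial_u\frac1n\log f_n(nu)$ converges locally uniformly, off the real axis, to $-R_{\mu_{c,\Sigma}}(u)$. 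Termwise, $\partial_u$ of the $j$-th summand is $-\frac{n\alpha^2 u}{1-n\alpha u}$, and one rewrites
\begin{equation*}
\frac{y^2 u}{1-yu}=\frac{y^2}{1+y^2}\Bigl(\frac{u+y}{1-yu}-y\Bigr),\qquad y=n\alpha_{j,n}.
\end{equation*}
This gives $c_n-\int t\,\dd\Sigma_n+\int\frac{u+t}{1-tu}\dd\Sigma_n(t)$. Since the kernel is bounded, continuous and vanishing suitably at infinity for $u\notin\R$, vague convergence together with \eqref{eq:c assumptions} yields the limit $R_{\mu_{c,\Sigma}}$ of \eqref{eq:free levy-khin}. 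The three null parts of the Hahn--Jordan decomposition, and the fact that $M_n=o(n)$ keeps the rescaled poles $1/(n\alpha_{j,n})$ within $o(1)$ of $\R$, take care of the complex roots. Consequently the saddle point equation $\partial_uS=0$ becomes $x=\frac1u+R(u)$ in the limit. By Definition~\ref{def:R transform definition} and the analytic extension of $R$ to $\C_-$ from Proposition~\ref{rthm:ID rep}, this has the solution $u=G_{\mu_{c,\Sigma}}(x)$ for $x\in\C_+$.

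The second step is the saddle point estimate itself. For $x\in\C_+$ (and its conjugate) we deform the contour to pass through $u_n^*\approx G(x)$ along a path of steepest descent. We then show
\begin{equation*}
\frac1n\log\bigl|A_{n,f_n}(nx)/(n!\,n^n)\bigr|\to\Re S(G(x);x),
\end{equation*}
where $S$ is the limit of $S_n$. One also checks that $\partial_x S(G(x);x)=G(x)$, since $\partial_uS=0$ at the saddle. This identifies the limit of the normalized log-potential with $\int\log|x-t|\,\dd\mu_{c,\Sigma}(t)$ up to an additive constant, for $x$ off the real line. Standard potential-theoretic arguments then give vague convergence of $\llbracket A_{n,f_n}\rrbracket$ (after the $\mathcal D$-rescaling implicit in the normalization). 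Convergence of the log-potentials on a set of positive area, combined with tightness, suffices, the latter because the roots cannot escape to $\infty$ in mass since $\mu_{c,\Sigma}$ is a probability measure. Tightness of mass is in fact automatic once the Cauchy transforms converge at points $x=\ii y$ with $y\to\infty$, giving weak convergence.

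The main obstacle is the global part of the saddle point argument. We must construct, uniformly in $n$, a contour through $u_n^*$ on which $\Re S_n(u;x)$ is maximized only at the saddle. The function $\log f_n(nu)$ has branch points at $1/(n\alpha_{j,n})$ accumulating near the real axis, possibly slightly off it, and $\Sigma$ may have heavy tails, so $f_n$ need not be controlled globally. I would first try a circle-like contour $|u|=r$ through the saddle, adjusted locally. Along it I would bound $\Re\frac1n\log f_n(nu)$ using $|1-yu|e^{\Re yu}$ estimates for each factor: these are monotone along suitable rays for $f_n\in\LP$, and perturbed by $O(M_n/n)$ otherwise. The remaining technical work is to exploit the Herglotz-type positivity of $\Im R$ on $\C_-$, which makes $\Re S$ decrease away from the saddle, and to pass this positivity to finite $n$ with errors that are $o(1)$.
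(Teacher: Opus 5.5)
Your overall architecture is the paper's: the contour representation, the rewriting $-\frac{f_n'(nu)}{f_n(nu)}=c_n-\int t\,\dd\Sigma_n+\int\frac{u+t}{1-tu}\,\dd\Sigma_n(t)$ behind Proposition \ref{prop:convergence of R-transforms}, the limiting saddle point $u^*=G_{\mu_{c,\Sigma}}(z)$, and the passage to Cauchy transforms with tightness from $z=\ii y$. For that last step the paper uses $G_n=A_{n-1,f_n}/A_{n,f_n}$ and the asymptotics with $d=-1,0$, which is equivalent to your differentiation of $\log A_{n,f_n}$.

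The genuine gap is the step you postpone, which carries the whole proof. You never produce a contour through $u_n^*$ on which $\Re S_n$ is maximized only at $u_n^*$, with a gap uniform in $n$. This must include the two points where the contour crosses $\R$, where zeros of $f_n(nu)$ accumulate and $\frac1n\log|f_n(nu)|$ does not converge uniformly. The mechanisms you invoke do not give this. On $|u|=r$ one has $\partial_\theta\Re S_n=-\Im(zu)+\Im(uR_n(u))$, which the sign condition $\Im R\le 0$ on $\C_-$ does not control. Monotonicity of factors along rays is also not what a circle needs. The paper fills this step with a sublevel-set/Morse argument for the harmonic limit $H$ (Proposition \ref{prop:Contour}). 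Its key input is that $u^*$ is the \emph{only} critical point in $\C_-$ (Lemma \ref{lem:clean lem_unique_saddlepoint}), a free-probabilistic fact absent from your plan, combined with Hopf's lemma and Lemma \ref{lem:H_Monotonicity} on $\R$. It then closes the contour by reflection, using $\Re h_n(\bar u)-\Re h_n(u)=2\Im z\,\Im u<0$, and passes to finite $n$ via the truncated logarithms $\log_M$ of Proposition \ref{prop:convergence of R-transforms}(3).

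Your circle can in fact be rescued, but by concavity rather than monotonicity. For $f_n\in\LP$ and $u=re^{\ii\theta}$, every term of $\Re S_n$ is a concave function of $s=\cos\theta$: namely $\frac12\log(1-2yrs+y^2r^2)$, the linear terms $yrs$ and $-c_nrs$, and $-\frac{n\sigma_n^2}{2}r^2(2s^2-1)$. Meanwhile $\Re(zu)=r(s\Re z+\sqrt{1-s^2}\,\Im z)$ is strictly concave on the lower semicircle and drops by $2r\Im z\sqrt{1-s^2}$ on the upper one. Hence the circle through $u_n^*$ has its unique maximum at $u_n^*$ with a quadratic gap. For real-rooted $f_n$ and $z\in\C_+$ this is a noticeably shorter route than the paper's.

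For $f_n\in\LP_{M_n}$ this per-factor concavity fails. Near the crossing points, a root displacement of size $M_n/n$ is not a small perturbation of $\log|1-yu|$. You would have to run the concavity argument for the limit $H$ (whose $\Sigma$ is real) and transfer it to finite $n$, as the paper does: locally uniform convergence away from $\R$, plus $\log_M$ upper bounds near $\pm r$.

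Smaller points:
\begin{itemize}
\item The representation should read $A_{n,f_n}(x)=\frac{n!}{2\pi\ii\,n^{n}}\oint e^{nS_n(u;x)}\frac{\dd u}{u}$. The roots are already on scale $O(1)$, so no dilation enters.
\item $S_n$ itself need not converge, since $\log(1-ut)\frac{1+t^2}{t^2}+\frac ut$ grows like $\log|t|$. This is why the paper normalizes by $\frac1n\log f_n(-n\ii)$; it is harmless for you if you only use derivatives.
\item $\frac{u+t}{1-tu}\to-\frac1u\neq0$ as $|t|\to\infty$, so what is actually used is tightness of $\{\Sigma_n\}$, as in the paper's proof of Proposition \ref{prop:convergence of R-transforms}.
\end{itemize}
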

%

The proof of Theorem \ref{thm:main result, general} relies on a novel combination of the saddle point method and free probability identities,  and we will outline it in Section \ref{sec:outline_of_proof}.
There, we will explain that $-\frac{f_n'(nu)}{f_n(nu)}$ is ideally the $R$-transform of an $\boxplus$-ID measure $\mu_{n}$, which is close to $\llbracket A_{n,f_n}\rrbracket$ and converges to $\mu_{c,\Sigma}$. For $f_n\in\LP$ and compactly supported measures on $\R$, this approximation can be made rigorous using moments,  see \cite{campbell2025freeinfinitedivisibilityfractional,Arizmendi-Fujie2026}, but we are especially interested in (stable) laws which have very few moments requiring new techniques.

In order to get a better picture of Theorem \ref{thm:main result, general}, let us discuss some consequences. We immediately have the following corollary. 
\begin{corollary}\label{cor:Convergnce of Jensen polynomials}
	Let $\{f_n\}_{n\geq 1}$ be a sequence of entire functions satisfying Assumption \ref{assump:main assumption}. Then, \begin{equation}
		\llbracket J_{n,f_n}\rrbracket\Rightarrow\mu_{c,\Sigma}^{-1},
	\end{equation} where $\mu_{c,\Sigma}^{-1}$ is the push-forward of the measure $\mu_{c,\Sigma}$ by the map $x\mapsto x^{-1}$.
\end{corollary}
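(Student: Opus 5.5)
The plan is to deduce this directly from Theorem \ref{thm:main result, general} through the reciprocal relation \eqref{eq:J and A inversion}, $J_{n,f_n}(z)=z^nA_{n,f_n}(1/z)$. If $A_{n,f_n}$ has degree exactly $n$ (leading coefficient $\gamma_0=f_n(0)=1$ under Assumption \ref{assump:main assumption}, since $f_n$ has no factor $z^m$ and $C=1$), then the nonzero roots of $J_{n,f_n}$ are exactly the reciprocals $w^{-1}$ of the nonzero roots $w$ of $A_{n,f_n}$. Zero roots of $A_{n,f_n}$ (which occur when $\gamma_n=0$) correspond to a drop in the degree of $J_{n,f_n}$. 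Because $\llbracket\cdot\rrbracket$ is defined with respect to the formal degree $n$, I would formally assign such roots to $\infty$. Hence $\llbracket J_{n,f_n}\rrbracket$ is the push-forward of $\llbracket A_{n,f_n}\rrbracket$ under $\iota(x)=x^{-1}$, restricted to $\C\setminus\{0\}$, and any lost mass equals $\llbracket A_{n,f_n}\rrbracket(\{0\})$.

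Next I would apply Theorem \ref{thm:main result, general} to get $\llbracket A_{n,f_n}\rrbracket\Rightarrow\mu_{c,\Sigma}$, and then use the continuous mapping theorem. The map $\iota$ is continuous on $\C\setminus\{0\}$, so it suffices to check that $\mu_{c,\Sigma}(\{0\})=0$; then $\iota$ is continuous $\mu_{c,\Sigma}$-almost everywhere and $\iota_*\llbracket A_{n,f_n}\rrbracket\Rightarrow\iota_*\mu_{c,\Sigma}=\mu^{-1}_{c,\Sigma}$. The same fact controls the lost mass: by the portmanteau theorem applied to the closed set $\{0\}$, $\limsup_n\llbracket A_{n,f_n}\rrbracket(\{0\})\le\mu_{c,\Sigma}(\{0\})=0$, so the mass at the origin is asymptotically negligible. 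Tightness near infinity for $J$ comes from the absence of mass of $A$ near the origin, which is the same portmanteau estimate applied to small closed discs.

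The main obstacle is the atom at $0$. Free infinitely divisible laws can have atoms, for example free Poisson laws with rate $<1$ at the origin, and more generally $\mu_{c,\Sigma}$ can have an atom at $0$ in degenerate cases. This includes the trivial case $\Sigma=0$, $c=0$, where $A_{n,f_n}$ may be close to $z^n$. If $\mu_{c,\Sigma}(\{0\})>0$, I would interpret $\mu_{c,\Sigma}^{-1}$ as a measure on the Riemann sphere, or equivalently take convergence vaguely on $\C$, with the mass at $0$ sent to $\infty$. In that case the argument works verbatim on $\widehat{\C}$, where $\iota$ is a homeomorphism, so weak convergence on the sphere is immediate with no almost-everywhere caveat. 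I would therefore state the proof on $\widehat{\C}$ and remark that it is weak convergence on $\C$ whenever $\mu_{c,\Sigma}$ has no atom at the origin.
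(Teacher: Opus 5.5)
Your proposal is correct and is the paper's own argument, which the paper leaves implicit when it says the corollary follows immediately: the roots of $J_{n,f_n}(z)=z^nA_{n,f_n}(1/z)$ are the reciprocals of those of $A_{n,f_n}$, and Theorem \ref{thm:main result, general} then gives the claim via the continuous mapping theorem. Your treatment of a possible atom of $\mu_{c,\Sigma}$ at $0$ is a legitimate refinement the paper glosses over: such atoms occur, e.g.\ for $\delta_0$ or free Poisson laws with rate $<1$, and come with a drop in the degree of $J_{n,f_n}$. The statement as written implicitly needs $\mu_{c,\Sigma}(\{0\})=0$, or your interpretation on the Riemann sphere.
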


One particularly illuminating special case of Theorem \ref{thm:main result, general} is the case when $f_n$ is defined via a fixed function $f$, up to a shift and normalization of the roots of $A_{n,f}$.
 Then, Assumption \ref{assump:main assumption} becomes a condition on the behavior of the root density of $f$ near $\pm\infty$.  For any $f\in\LP_{M}$  we define
\begin{align*}
n_{f,+}(r)&= |\{j\in\N:\alpha_{j}^{-1} \in (0,r)\times\ii[-M,M]\}|\\
n_{f,-}(r)&= |\{j\in\N:\alpha_{j}^{-1} \in (-r,0)\times\ii[-M,M]\}|
\end{align*} 
 and $n_{f}(r)=n_{f,+}(r)+n_{f,-}(r)$.  Let \begin{equation}\label{eq:b_n def}
 	b_n=\inf\{b>0:n_{f}(b)\geq n\},
 \end{equation} 
 be the pseudo-inverse and, for any  real sequence $E_n\to E\in\R$,  define 
 \begin{equation}\label{eq:a_n}
 	a_n=E_n-c\frac{b_n}{n}+\frac{1}{n}\sum_{j=1}^{\infty}\frac{(b_n\alpha_{j})^3}{1+(b_n\alpha_{j})^2},
 \end{equation} 
 where $c\in\R$ comes from the definition \eqref{eq:func def} of $f$.    
 A function $g:(0,\infty)\rightarrow(0,\infty)$ is said to be \emph{regularly varying} at $\infty$ with  index $\alpha$ if for every $x>0$ \begin{equation}\label{eq:RV def}
	\lim\limits_{r\rightarrow\infty}\frac{g(xr)}{g(r)}=x^{\alpha}.
\end{equation} If the index $\alpha=0$, then $g$ is said to be \emph{slowly varying}. A classical result in the study of regular variation (see for example \cite{Resnick2007}) is that if $g$ is regularly varying with index $\alpha$, then there exists a slowly varying function $h$ such that $g(r)=h(r)r^{\alpha}$. 

\begin{corollary}\label{cor:fixed f and stable laws}
	Let $f\in\LP_{M}$ for some $M\geq 0$. \begin{enumerate}
		\item If $\sigma^2>0$ and $f_n(z)=f(n^{-1/2}z)$, then \begin{equation}
			\llbracket A_{n,f_n}\rrbracket\Rightarrow \mathsf{sc_{\sigma}},
		\end{equation} the semicircle law with variance $\sigma^2$. 
		
		\item If $\sigma^2=0$, $n_f$ is regularly varying with index $\alpha\in (0,2)$ and the limit\begin{equation}\label{eq:theta def}
			\lim\limits_{r\rightarrow\infty} \frac{n_{f,+}(r)}{n_{f}(r)}=\theta\in[0,1],
		\end{equation} exists,  then set $f_n(z)=e^{-a_nz}f\left(\frac{b_nz}{n}\right)$ to obtain
		\begin{equation}
		\llbracket A_{n,f_n}\rrbracket\Rightarrow\delta_{E}\boxplus\mu_{\alpha,\theta},
		\end{equation}  where $\mu_{\alpha,\theta}$ is the free $\alpha$-stable distribution with L\'evy pair $(0,\Sigma)$ as defined in \eqref{eq:stable_levy_measure}.
	\end{enumerate}
\end{corollary}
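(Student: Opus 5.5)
Both parts will follow from Theorem \ref{thm:main result, general}: for each choice of $f_n$ we check Assumption \ref{assump:main assumption} and identify the limiting L\'evy pair. Writing $f$ as in \eqref{eq:func def} (with $m=0$, which we may assume since a factor $z^m$ only shifts the Appell index), the rescaled function $f(\lambda z)$ has Gaussian parameter $\sigma^2\lambda^2$, linear coefficient $c\lambda$, and reciprocal roots $\lambda\alpha_j$. The constant $C$ is irrelevant for the roots. All roots of $f_n$ lie within $M$ of $\R$ after rescaling, so $M_n=o(n)$ holds trivially when $\lambda\to0$ (or $\lambda\le b_n/n$ with $b_n=o(n)$).

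\emph{Part (1).} Take $\lambda=n^{-1/2}$. Then $n\sigma_n^2=\sigma^2$, and $n\alpha_{j,n}=n^{1/2}\alpha_j$. The non-Gaussian part of $\Sigma_n$ has total mass
\[
\frac1n\sum_j \frac{n\alpha_j^2}{n\alpha_j^2+1}\le \frac1n\sum_j \min(n\alpha_j^2,1).
\]
This tends to $0$ by dominated convergence, since $\sum_j\alpha_j^2<\infty$. Hence $\Sigma_n\to\sigma^2\delta_0$.

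For the drift, $c_n=cn^{-1/2}\to0$. The term $\int t\,\dd\Sigma_n$ is bounded by $\frac1n\sum_j\frac{n^{3/2}|\alpha_j|^3}{1+n\alpha_j^2}\le \frac1n\sum_j \min(n^{3/2}|\alpha_j|^3,n^{1/2}|\alpha_j|)$. Splitting at $|\alpha_j|\gtrless n^{-1/2}$, this is $\le n^{-1/2}\#\{j:|\alpha_j|>n^{-1/2}\}+n^{1/2}\sum_{|\alpha_j|\le n^{-1/2}}|\alpha_j|^3$. Both pieces are $o(1)$: the count is $\le n\sum_{|\alpha_j|>n^{-1/2}}\alpha_j^2=o(n)$, and the second piece is $\le \sum_{|\alpha_j|\le n^{-1/2}}\alpha_j^2\to0$. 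Theorem \ref{thm:main result, general} then gives the semicircle law with L\'evy pair $(0,\sigma^2\delta_0)$, i.e.\ $\mathsf{sc}_\sigma$.

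\emph{Part (2).} Take $\lambda=b_n/n$, so $n\alpha_{j,n}=b_n\alpha_j$ and
\[
\Sigma_n=\frac1n\sum_j\frac{(b_n\alpha_j)^2}{1+(b_n\alpha_j)^2}\delta_{b_n\alpha_j}.
\]
Regular variation of $n_f$ with index $\alpha$ gives $n_f(b_n)\sim n$, and hence $n_f(b_n s)/n\to s^{\alpha}$. Consequently the point measure $\frac1n\sum_j\delta_{b_n\alpha_j}$, which has atoms at $b_n/\rho_j$ with $\rho_j=\alpha_j^{-1}$, converges vaguely on $\R\setminus\{0\}$ to the L\'evy measure with tails $\nu(|x|>y)=y^{-\alpha}$, split between the two sides in proportions $\theta$ and $1-\theta$ by \eqref{eq:theta def}. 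The imaginary parts of the $\rho_j$ are bounded by $M$, so they are negligible at scale $b_n\to\infty$. Differentiating the tails gives density $\alpha\theta x^{-1-\alpha}$ on $x>0$ (and the analogue on $x<0$). Multiplying by $x^2/(1+x^2)$ gives exactly \eqref{eq:stable_levy_measure}.

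Upgrading this to vague convergence of $\Sigma_n$ requires tightness near $0$: we need $\frac1n\sum_{|b_n\alpha_j|<\eps}(b_n\alpha_j)^2\to O(\eps^{2-\alpha})$. This is a Karamata-type estimate $\int_{r>b_n/\eps}(b_n/r)^2\,\dd n_f(r)\sim C\eps^{2-\alpha}n$, valid since $\alpha<2$. No mass escapes at infinity because the weights are bounded by $1/n$ and $\#\{j:|b_n\alpha_j|>K\}/n\to K^{-\alpha}$.

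For the drift, $f_n(z)=e^{-a_nz}e^{-cb_nz/n}\prod_j(1-\lambda\alpha_jz)e^{\lambda\alpha_jz}$, so $c_n=a_n+cb_n/n$. Meanwhile $\int t\,\dd\Sigma_n=\frac1n\sum_j\frac{(b_n\alpha_j)^3}{1+(b_n\alpha_j)^2}$, which by \eqref{eq:a_n} is exactly cancelled by $a_n$ once the $cb_n/n$ terms cancel. Thus $c_n-\int t\,\dd\Sigma_n=E_n\to E$. The centering sequence $a_n$ was designed precisely so this drift identity holds, which is why no separate convergence of $\int t\,\dd\Sigma_n$ is needed. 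Theorem \ref{thm:main result, general} then gives $\mu_{E,\Sigma_{\alpha,\theta}}=\delta_E\boxplus\mu_{\alpha,\theta}$.

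The main obstacle is the vague-convergence step in (2). It must handle the complex roots, the pseudo-inverse $b_n$ (which requires $n_f(b_n)\sim n$, true by regular variation and monotonicity), and the uniform control of small atoms via Potter bounds.
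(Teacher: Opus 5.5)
Your route is the paper's: check Assumption~\ref{assump:main assumption} for the rescaled $f_n$ and apply Theorem~\ref{thm:main result, general}. Part (2) matches the paper step for step. That means vague convergence of the rescaled root-counting measure away from $0$ with negligible imaginary parts, the $O(\eps^{2-\alpha})$ bound on $|\Sigma_n|$ near $0$ via Karamata/integration by parts, and the exact identity $c_n-\int t\,\dd\Sigma_n=E_n$ built into $a_n$. Your dominated-convergence bound on the non-Gaussian mass in part (1) is the paper's continuity-from-above argument.

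The step that does not work as written is your drift estimate in part (1). The paper skips it, only noting $c_n=O(n^{-1/2})$, but it is needed because $\mathsf{sc}_\sigma$ is centred. There are three problems with your bound.
\begin{itemize}
\item On $\{|\alpha_j|>n^{-1/2}\}$ your summand is $n^{-1/2}|\alpha_j|$, not $n^{-1/2}$.
\item The inequality $\#\{j:|\alpha_j|>n^{-1/2}\}\le n\sum_{|\alpha_j|>n^{-1/2}}\alpha_j^2$ only gives $O(n)$, not $o(n)$.
\item Even a count of order $o(n)$ would only yield $n^{-1/2}\cdot o(n)=o(\sqrt n)$.
\end{itemize}
Concretely, for $f(z)=e^{-z^2/2}\cos z$ one has $n^{-1/2}\#\{j:|\alpha_j|>n^{-1/2}\}\to 2/\pi$, so your first piece does not tend to $0$.

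The claim $\int t\,\dd\Sigma_n\to0$ is nevertheless true, and dominated convergence gives it in one line. For $n\ge 3M^2$ the strip condition $|\Im \alpha_j^{-1}|\le M$ gives $|1+n\alpha_j^2|\ge\frac12\bigl(1+n|\alpha_j|^2\bigr)$. Hence
\[
\Big|\int t\,\dd\Sigma_n\Big|=\frac{1}{\sqrt n}\Big|\sum_j\frac{n\alpha_j^3}{1+n\alpha_j^2}\Big|\le 2\sum_j|\alpha_j|^2\,\frac{\sqrt n\,|\alpha_j|}{1+n|\alpha_j|^2}.
\]
Each summand tends to $0$ and is bounded by $|\alpha_j|^2$, which is summable.

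A minor point: $M_n=o(n)$ does not follow from $\lambda\to0$. The roots of $f(\lambda z)$ have imaginary parts up to $M/\lambda$, so you need $n\lambda\to\infty$. This holds here because $\sqrt n\to\infty$ in part (1) and $b_n\to\infty$ in part (2).
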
 
 If we extend the pseudo-inverse $b_n$ to a function $b:\R_+\rightarrow\R_+$ given by $b(s)=\inf\{b>0:n_f(b)\geq s\}$, then this function is also regularly varying at infinity with index $\frac{1}{\alpha}$, see for example \cite{Resnick2007}. Thus, the scale on which the empirical root measures converge is given either by the Gaussian component $\sigma^2>0$ or by the index $\alpha$.   

The most classical example of part (1) in Corollary \ref{cor:fixed f and stable laws} is given by the Gaussian $f(x)=\exp(-\sigma^2x^2/2)$ with corresponding Appell-polynomials $A_{n,f_n}(x)=\frac{\sigma ^n}{n^{n/2}}\He_n(\sqrt n x/\sigma)$, where $\He_n$ are the probabilist's Hermite polynomials whose root asymptotics are well known to follow the semicircle law $\mathsf{sc_{\sigma}}$, see for example \cite{Jalowy-Kabluchko-Marynych2025part2}. Examples of functions for which Corollary \ref{cor:fixed f and stable laws} applies include the the Airy function $\Ai$ ($\alpha=3/2, \theta=0$), the reciprocal Gamma function ($\alpha=1, \theta=0$), $\cos(z)$ and $\sin(z)/z$ ($\alpha=1, \theta=1/2$), and many others. Corollary \ref{cor:fixed f and stable laws} also applies almost surely to random entire functions whose roots are distributed to some appropriate point process, such as the \emph{stochastic $\zeta$-function} ($\alpha=1, \theta=1/2$) \cite{Valko-Virag2022}, a function with homogeneous Poisson roots such as in \cite{Pemantle-Subramanian2017} ($\alpha=1, \theta=1/2$), or more general random $\LP$ functions as considered in \cite{Assiotis2022}. 
Let us explicitly mention the following special case of Corollaries \ref{cor:Convergnce of Jensen polynomials} and \ref{cor:fixed f and stable laws}, which is one of our motivations for allowing functions $f\in\LP_M$ with roots in a strip.  \begin{corollary}\label{cor:Riemann Xi}
	Let $f_{n}(z)=\Xi(2\log(n)^{-1} z)$, where $\Xi(z)=\xi\left(\frac{1}{2}+iz\right)$, \begin{equation}
		\xi(z)=\frac{1}{2}z(1-z)\pi^{-z/2}\Gamma\left(\frac{z}{2}\right)\zeta(z),
	\end{equation} and $\zeta$ is the Riemann $\zeta$-function. Then, \begin{equation}
		\llbracket A_{n,f_n}\rrbracket\Rightarrow\frac{1}{\pi}\frac{1}{x^2+1}\dd x.
	\end{equation} Since the Cauchy distribution is invariant under $x\mapsto\frac{1}{x}$, it also follows that \begin{equation}
		\llbracket J_{n,f_n}\rrbracket \Rightarrow \frac{1}{\pi}\frac{1}{x^2+1}\dd x.
	\end{equation} 
\end{corollary}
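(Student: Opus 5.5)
The plan is to derive the statement from Corollary \ref{cor:fixed f and stable laws}(2), applied to $f=\Xi$, together with Corollary \ref{cor:Convergnce of Jensen polynomials}.

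\textbf{Step 1: $\Xi$ lies in some $\LP_M$.} $\Xi$ is a real even entire function of order $1$. Its zeros are $\gamma$ with $\rho=\tfrac12+\ii\gamma$ running over the nontrivial zeros of $\zeta$. These all lie in the strip $|\Im \gamma|\le \tfrac12$, so $\Xi\in\LP_{1/2}$. By evenness there is no linear factor, and the canonical product gives the representation \eqref{eq:func def} with $m=0$, $\sigma=0$ and $c=0$. Since $\sum|\gamma|^{-2}<\infty$, the product converges absolutely.

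\textbf{Step 2: the root counting functions.} By the Riemann--von Mangoldt formula,
\[
n_{\Xi}(r)=\frac{r}{\pi}\log\frac{r}{2\pi e}+O(\log r).
\]
This is regularly varying with index $\alpha=1$. Evenness gives $n_{\Xi,+}=n_{\Xi,-}$, so $\theta=\tfrac12$. (The strip condition does not affect counting by real parts.) The pseudo-inverse \eqref{eq:b_n def} then satisfies $b_n\sim \pi n/\log n$, hence
\[
\frac{b_n}{n}\sim\frac{\pi}{\log n}.
\]

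\textbf{Step 3: the centering $a_n$.} By symmetry the zeros come in pairs $\pm\gamma$ (and conjugates), so the odd sum in \eqref{eq:a_n} cancels exactly. With $c=0$ and $E_n=0$ we get $a_n=0$, and Corollary \ref{cor:fixed f and stable laws}(2) applies to $f_n(z)=\Xi(b_n z/n)$, giving
\[
\llbracket A_{n,f_n}\rrbracket\Rightarrow\mu_{1,1/2}.
\]
From \eqref{eq:stable_levy_measure}, $\mu_{1,1/2}$ has $\dd\Sigma=\frac{1}{1+x^2}\dd x$, so
\[
R(u)=\int\frac{u+x}{1-xu}\frac{\dd x}{1+x^2}=-\pi\ii
\]
for $u\in\C_-$ (a residue computation). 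This is the Cauchy distribution with scale $\pi$.

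\textbf{Step 4: the scaling mismatch (main obstacle).} The statement uses $2/\log n$ rather than $b_n/n\sim\pi/\log n$. Write $f_n(z)=\Xi(\tfrac{2}{\log n}z)=\tilde f_n(s_n z)$, where $\tilde f_n(z)=\Xi(b_n z/n)$ and $s_n=\frac{2n}{b_n\log n}\to\frac{2}{\pi}$. The roots of $f_n$ are those of $\tilde f_n$ divided by $s_n$. Since $\dd\Sigma$ is Cauchy-like, the pushforward of the L\'evy measure under the factor $\pi/2$ gives again the Cauchy L\'evy measure scaled by $2/\pi$. Equivalently, $A_{n,f_n}$ is a dilation of $A_{n,\tilde f_n}$: the rescaling $\gamma_k\mapsto s_n^k\gamma_k$ multiplies the roots by $s_n$. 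The limit is therefore $\mathcal D_{2/\pi}$ of the Cauchy law with scale $\pi$, namely the standard Cauchy law $\frac{1}{\pi}\frac{\dd x}{1+x^2}$.

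The delicate points are twofold. First, the slowly varying factor $\log n$ must be handled carefully: it is cleaner to verify Assumption \ref{assump:main assumption} directly, using the Riemann--von Mangoldt asymptotic to show $\Sigma_n\to\frac{\dd x}{1+x^2}$ and $c_n-\int t\,\dd\Sigma_n=0$ by symmetry. Second, the roots $\gamma$ are complex: with $M_n=\tfrac12\cdot\frac{2n}{\log n}\cdot\frac{1}{n}\cdot n$, they lie in a strip of width $o(n)$ after scaling by $n$, so $M_n=o(n)$ holds. The imaginary parts contribute only vanishing parts of the complex measure $\Sigma_n$, since $\Im(n\alpha)$ is small relative to $|n\alpha|$ for all but $o(n)$ roots.

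Finally, the Jensen statement follows from Corollary \ref{cor:Convergnce of Jensen polynomials}, because the standard Cauchy law is invariant under $x\mapsto 1/x$.
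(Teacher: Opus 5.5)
Your overall route is the paper's: the corollary is stated there as a direct special case of Corollary~\ref{cor:fixed f and stable laws}(2) with $\alpha=1$, $\theta=\tfrac12$, together with Corollary~\ref{cor:Convergnce of Jensen polynomials}. Your structural steps are sound: $\Xi\in\LP_{1/2}$ with $c=\sigma=0$, $\theta=\tfrac12$ by evenness, $b_n\sim \pi n/\log n$, $a_n=0$ by oddness, and $\llbracket A_{n,f_n}\rrbracket=\mathcal D_{s_n}\llbracket A_{n,\tilde f_n}\rrbracket$ with $s_n\to 2/\pi$.

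The problem is the identification of the limit, which is the whole content of the statement. In \eqref{eq:stable_levy_measure} with $\alpha=1$, $\theta=\tfrac12$, the prefactor $\alpha(\theta\oindicator{x>0}+(1-\theta)\oindicator{x<0})$ equals $\tfrac12$. Hence $\dd\Sigma_{1,1/2}=\tfrac12\frac{\dd x}{1+x^2}$ and $R\equiv-\tfrac{\pi}{2}\ii$, so $\mu_{1,1/2}$ is the Cauchy law of scale $\pi/2$, and dilating by $2/\pi$ gives scale $1$. You dropped the $\tfrac12$ and got scale $\pi$. But $\mathcal D_{2/\pi}$ of the Cauchy law of scale $\pi$ is the Cauchy law of scale $2$, not the standard one. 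Your own remark that the L\'evy measure is multiplied by $2/\pi$ also gives $R\equiv -2\ii$. So your stated conclusion does not follow from your intermediate steps. It comes out right only because the final dilation was also miscomputed, offsetting the first slip.

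The same factor error affects your proposed direct check of Assumption~\ref{assump:main assumption}. For $f_n(z)=\Xi(2z/\log n)$, the atoms $x_j=n\alpha_{j,n}=\frac{2n}{\gamma_j\log n}$ have, by Riemann--von Mangoldt, asymptotic density $\frac{n}{\pi x^2}$ on each half-line. Hence $\Sigma_n\to\frac{1}{\pi}\frac{\dd x}{1+x^2}$, not $\frac{\dd x}{1+x^2}$, and $R\equiv-\ii$, which is directly the standard Cauchy law. Getting the scale exactly right matters again for the Jensen claim: $x\mapsto 1/x$ sends the Cauchy law of scale $a$ to the one of scale $1/a$, so invariance holds only for $a=1$.

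A minor slip: the zeros of $f_n$ are $\frac{\log n}{2}\gamma$, so $M_n=\frac{\log n}{4}$, not $n/\log n$. Both are $o(n)$, and Corollary~\ref{cor:fixed f and stable laws} handles the strip internally anyway.
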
 The Riemann Hypothesis is equivalent to the statement that $\Xi\in\LP$. However, all that can currently be said about $\Xi$ is that all its roots are contained in a strip.  

So far, all our results on Appell polynomials can be understood as the operator $f_n(\partial_z)$ applied to the monomial $z^n$ with initial root distribution being the Dirac delta $\delta_0$. The following extends this to a dynamic differentiation flow, yielding a time-dependent distribution of a free L\'evy process with any given initial distribution $\mu$. 

\begin{corollary}\label{cor:Free convolution theorem}
	Let $\{f_n\}_{n=1}^{\infty}\subset\LP$ and satisfy Assumption \ref{assump:main assumption}. If $t\geq 0$ and $\{p_n\}_{n\geq 1}$ is a sequence of real-rooted polynomials indexed by their degree such that $\llbracket p_n\rrbracket\Rightarrow\mu$ weakly as $n\rightarrow\infty$, then \begin{equation}
		\llbracket f_{\lfloor tn\rfloor}\left(t\partial_{z}\right)p_n(z)\rrbracket\Rightarrow \mu\boxplus\mu_{c,\Sigma}^{\boxplus t},
	\end{equation} where $\boxplus$ is the free convolution of probability measures.
\end{corollary}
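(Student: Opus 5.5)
The plan is to reduce Corollary \ref{cor:Free convolution theorem} to Theorem \ref{thm:main result, general} through the finite free additive convolution $\boxplus_n$ of Marcus--Spielman--Srivastava. The key algebraic identity is that for $f\in\LP$ and a monic degree-$n$ polynomial $p$,
\begin{equation*}
f(\partial_z)p(z)=p\boxplus_n \bigl(f(\partial_z)z^n\bigr)=p\boxplus_n A_{n,f}.
\end{equation*}
This holds because $q\boxplus_n r=\frac{1}{n!}\sum_{k}\partial_z^k q\cdot(\text{coefficient data of } r)$; equivalently, writing $r=\hat r(\partial_z)z^n$, one has $q\boxplus_n r=\hat r(\partial_z)q$. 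Since $f(\partial_z)z^n$ only sees the Taylor coefficients $\gamma_0,\dots,\gamma_n$, this is a finite-degree identity.

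Applied with $f=f_{m}(t\,\cdot)$, $m=\lfloor tn\rfloor$, we obtain $f_m(t\partial_z)p_n=p_n\boxplus_n A_{n,f_m(t\cdot)}$. The theorem that $\boxplus_n$ converges to $\boxplus$ for real-rooted polynomials with weakly converging root measures (Arizmendi--Perales, and its extension to non-compact supports, as discussed in \S\ref{sec:finite_free_probability}) then reduces the claim to showing
\begin{equation*}
\llbracket A_{n,g_n}\rrbracket\Rightarrow\mu_{c,\Sigma}^{\boxplus t},\qquad g_n(z):=f_{\lfloor tn\rfloor}(tz).
\end{equation*}
Real-rootedness of $A_{n,g_n}$ is automatic because $g_n\in\LP$.

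This last step is a check of Assumption \ref{assump:main assumption} for $g_n$. First, $g_n$ has Gaussian parameter $t^2\sigma_m^2$, reciprocal roots $t\alpha_{j,m}$, and linear coefficient $tc_m$. Hence
\begin{equation*}
n t^2\sigma_m^2=\tfrac{n t}{m}\cdot m t\sigma_m^2\cdot\ldots
\end{equation*}
More cleanly, since $n t\alpha_{j,m}=\tfrac{nt}{m}\,m\alpha_{j,m}$ and $nt/m\to1$, we get
\begin{equation*}
\Sigma^{g}_n=n t^2\sigma_m^2\delta_0+\frac1n\sum_j\frac{(nt\alpha_{j,m})^2}{1+(nt\alpha_{j,m})^2}\delta_{nt\alpha_{j,m}},
\end{equation*}
which is, up to an asymptotically negligible dilation $x\mapsto (nt/m)x$, equal to $\frac{m}{n}\Sigma_m\to t\Sigma$ vaguely. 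Similarly,
\begin{equation*}
tc_m-\int x\,\dd\Sigma^g_n(x)\approx t\Bigl(c_m-\int x\,\dd\Sigma_m\Bigr)\to tc.
\end{equation*}
The R-transform in \eqref{eq:free levy-khin} is linear in the pair $(c,\Sigma)$, so $\mu_{tc,t\Sigma}=\mu_{c,\Sigma}^{\boxplus t}$, and Theorem \ref{thm:main result, general} gives the claim. The case $t=0$ is trivial.

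The main obstacle is controlling the dilation error $nt/m\ne1$ in the vague limit and in the drift term $\int x\,\dd\Sigma$. The drift is delicate because $x$ is unbounded and $\Sigma_n$ need not have tight mass at infinity. To handle it, I would compare $\frac{x^2}{1+x^2}$ evaluated at $\lambda x$ versus $x$, and $\frac{x^3}{1+x^2}$ similarly, with $\lambda=nt/m=1+O(1/n)$. The differences are $O(|\lambda-1|)$ times bounded weights times $\frac{x^2}{1+x^2}$. Since the total masses of $\Sigma_m$ stay bounded, which follows from vague convergence together with the structure of the weights (a point that itself needs care), these errors vanish.

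A second point of care is the non-compact support in the $\boxplus_n\to\boxplus$ limit. If only the compact version is available, I would truncate the roots of $p_n$ using monotonicity/interlacing of $\boxplus_n$ in each root, and conclude by a tightness argument.
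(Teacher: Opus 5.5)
Your proposal follows the paper's own proof: write $f_{\lfloor tn\rfloor}(t\partial_z)p_n=p_n\boxplus_n A_{n,g_n}$ with $g_n(z)=f_{\lfloor tn\rfloor}(tz)$, check Assumption~\ref{assump:main assumption} for $g_n$ with limiting pair $(tc,t\Sigma)$ so that Theorem~\ref{thm:main result, general} gives $\llbracket A_{n,g_n}\rrbracket\Rightarrow\mu_{c,\Sigma}^{\boxplus t}$, and conclude with Lemma~\ref{lem: convolution convergence}. Your handling of the factor $\lambda=nt/\lfloor tn\rfloor$ is more careful than the paper's, which simply replaces $\lfloor tn\rfloor$ by $tn$, but one justification needs correcting: bounded total mass of $\Sigma_m$ does not follow from vague convergence alone (mass can escape to infinity); it does follow from the weak convergence that the proof of Proposition~\ref{prop:convergence of R-transforms} actually uses, and since $|\lambda-1|=O(1/n)$ your drift error $O\bigl(|\lambda-1|\,\Sigma_m(\R)\bigr)$ then vanishes.
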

 Corollary \ref{cor:Free convolution theorem}  extends Kabluchko's \cite{Kabluchko2022leeyang} result on the \emph{backwards heat flow} from convergence to the distribution of free Brownian motion to any free L\'evy process. If we consider the special case when $f_{n}(u)=e^{-c_nu}f\left(\frac{b_n}{n}u\right)$ in Corollary \ref{cor:fixed f and stable laws}, then the limit is the law of a free $\alpha$-stable process. 

For the previous results we assumed that $f_n(0)=1$. If $f_n$ has a root of multiplicity $m_n$ at $0$, then this amounts to differentiating polynomials whose limiting empirical root measure we understand. This problem has been considered by a number of authors in recent years \cite{Steinerberger2019,Steinerberger2020,Arizmendi-GarzaVargas-Perales2023,Hoskins-Kabluchko2021,Campbell-ORourke-Renfrew2024,Arizmendi-Fujie-Perales-Ueda2024,Hall-Ho-Jalowy-Kabluchko2023Repeat,Arizmendi-Campbell-Fujie2025}.  

\begin{corollary}\label{cor:f with a root at 0} 
	Let $\tilde{f}_n(z)=z^{m_n}f_n(z)$ for a sequence of $f_n\in\LP$ satisfying  Assumption \ref{assump:main assumption}. If $\{p_n\}_{n\geq 1}$ is a sequence of real-rooted polynomials indexed by their degree such that $\llbracket p_n\rrbracket\Rightarrow\mu$ and $\frac{m_n}{n}\rightarrow t\in [0,1)$, then \begin{equation}
		\llbracket \tilde{f}_{n}(\partial_z)p_{n}\rrbracket\Rightarrow\mathcal{D}_{1-t}\left(\mu\boxplus\mu_{c,\Sigma} \right)^{\boxplus\frac{1}{1-t}}.
	\end{equation}
\end{corollary}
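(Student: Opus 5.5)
Write $\tilde f_n(\partial_z)p_n=f_n(\partial_z)\,\partial_z^{m_n}p_n$, or equivalently $\partial_z^{m_n}f_n(\partial_z)p_n$, since all these operators commute. Which order is best depends on which convergence results are available. The natural plan is to apply $f_n(\partial_z)$ first and then differentiate $m_n$ times. The first step gives $\llbracket f_n(\partial_z)p_n\rrbracket\Rightarrow\mu\boxplus\mu_{c,\Sigma}$ by Corollary \ref{cor:Free convolution theorem} with $t=1$. That corollary is stated with $f_{\lfloor tn\rfloor}$, which at $t=1$ is exactly $f_n$. Since $f_n\in\LP$ and $p_n$ is real-rooted, the Pólya--Benz Theorem (Proposition \ref{prop:Polya-Benz}) shows that $q_n:=f_n(\partial_z)p_n$ is again real-rooted of degree $n$. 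Its leading coefficient is unchanged because $f_n(0)=1$.

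The second step is the repeated-differentiation limit. For real-rooted $q_n$ of degree $n$ with $\llbracket q_n\rrbracket\Rightarrow\nu$ and $m_n/n\to t\in[0,1)$, I would invoke the known result
\[
\llbracket \partial_z^{m_n}q_n\rrbracket\Rightarrow \mathcal D_{1-t}\,\nu^{\boxplus\frac{1}{1-t}}.
\]
This is the Steinerberger / Hoskins--Kabluchko / Arizmendi--Garza-Vargas--Perales result, which holds for general weakly convergent, not necessarily compactly supported, limits. Applying it with $\nu=\mu\boxplus\mu_{c,\Sigma}$ gives exactly the claimed limit. The case $t=0$ is consistent, because differentiating $o(n)$ times does not change the limit.

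The main obstacle is justifying the differentiation theorem when $\nu$ has non-compact support and heavy tails, which is precisely the situation for stable-type limits. If only the compactly supported version is available, I would argue by truncation and interlacing. First replace the roots of $q_n$ lying outside $[-K,K]$ by points at $\pm K$. This changes at most $\varepsilon n$ roots, and the resulting limit is the measure $\nu$ pushed to $[-K,K]$. Roots of derivatives interlace, and moving $k$ roots of a real-rooted polynomial changes the root-counting function of every derivative by at most $k$. Hence the Kolmogorov distance between the empirical root measures of $\partial^{m_n}$ of the two polynomials is $O(\varepsilon n/(n-m_n))$, which is uniformly small because $t<1$. Finally, let $K\to\infty$, using continuity of $\boxplus$-powers and dilations under weak convergence.
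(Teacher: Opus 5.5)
Your proposal is correct and follows the paper's proof. The paper writes $\tilde f_n(\partial_z)p_n=\partial_z^{m_n}\bigl[p_n\boxplus_n A_{n,f_n}\bigr]$, gets $\mu\boxplus\mu_{c,\Sigma}$ from Corollary \ref{cor:Free convolution theorem} at time $1$, and then applies the repeated-differentiation limit. The difference is in the citation: the paper uses the unconditional (not necessarily compactly supported) version, Theorem 7.7 of \cite{Arizmendi-Fujie-Perales-Ueda2024}, while the earlier works you name were proved under compact-support or bounded-root assumptions, so your attribution is inaccurate. Your truncation-plus-interlacing reduction is sound and is exactly what you would need if you relied on those earlier results.
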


We also obtain the following full local description of the roots of $A_{n,f_n}$.

\begin{theorem}\label{thm:local spacing}
	Let $\{f_n\}$ satisfy  Assumption \ref{assump:main assumption} and $E\in\R$ be such that $\mu_{c,\Sigma}$ has bounded density at $E$. Then, there exists constants $C_{E,n},B_{E,n}$ depending on $E,\mu_{c,\Sigma}$, and $n$, and $\phi_E\in [-\pi,\pi]$ depending on $E$ and $\mu_{c,\Sigma}$ such that for any fixed $d\in\N$ and as $n\to\infty$ we have \begin{align*}
		A_{n+d,f_n}&\left(E+\frac{w}{n}\right) \sim C_{E,n}|G(E)|^{-d}e^{w\Re G(E)}\cos\left(B_{E,n}+w\Im G(E)-d\arg G(E)\right) , 
	\end{align*} uniformly in $w$ on compact subsets of $\R$, where $G(E)=G(E+0\ii)$ is the continuation of the Cauchy transform of $\mu_{c,\Sigma}$ to $\R$, defined in \eqref{eq:Cauchy}.
\end{theorem}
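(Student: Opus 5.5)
Our plan is to start from a contour-integral representation of $A_{n+d,f_n}$ and apply the saddle point method. By the generating function \eqref{eq:generating funciton} and Cauchy's formula,
\begin{equation*}
A_{n+d,f_n}(x)=\frac{(n+d)!}{2\pi\ii}\oint f_n(z)e^{xz}\frac{\dd z}{z^{n+d+1}} .
\end{equation*}
Substituting $z=nu$ and $x=E+\frac{w}{n}$ turns this into
\begin{equation*}
\frac{(n+d)!}{n^{n+d}}\frac{1}{2\pi\ii}\oint \exp\bigl(nS_n(u)\bigr)\,e^{wu}u^{-d-1}\,\dd u ,
\qquad
S_n(u)=\frac{1}{n}\log f_n(nu)+Eu-\log u .
\end{equation*}
The saddle point equation is $S_n'(u)=0$, that is,
\begin{equation*}
-\frac{f_n'(nu)}{f_n(nu)}+\frac{1}{u}=E .
\end{equation*}
Under Assumption~\ref{assump:main assumption}, $-\frac{f_n'(nu)}{f_n(nu)}$ should converge, locally uniformly on $\C_-$, to $R_{\mu_{c,\Sigma}}(u)$. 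Expanding the logarithmic derivative of the product gives
\begin{equation*}
c_n+n\sigma_n^2u+\frac1n\sum_j\frac{n^2\alpha_{j,n}^2u}{1-n\alpha_{j,n}u},
\end{equation*}
which is exactly $c_n-\int t\,\dd\Sigma_n+\int\frac{u+t}{1-tu}\,\dd\Sigma_n(t)$. Hence the limiting saddle equation is $R(u)+1/u=E$. By Proposition~\ref{rthm:ID rep} and \eqref{eq:Cauchy R relationship}, its solutions are $u_\pm=G(E\pm \ii0)$, namely $u_+=\overline{u_-}$ with $u_-=G(E+\ii0)\in\C_-$ whenever the density at $E$ is positive. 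I take this identification from the main theorem's proof, Theorem~\ref{thm:main result, general}, which we may assume. Perturbation by Hurwitz/Rouché then gives saddles $u_{\pm,n}\to G(E)^{\mp}$.

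Next we deform the contour (initially a circle) into a steepest-descent contour passing through the two conjugate saddles $u_{-,n}$ and $\overline{u_{-,n}}$. We must show that $\Re S_n$ attains its maximum along the contour only at these two points. The natural candidate is the curve where $\Re S_n(u)=\Re S_n(u_{\pm,n})$, or in the limit the level curve of $\Re\bigl(\int^u (R(v)+1/v-E)\,\dd v\bigr)$. This will be handled via the same global argument used for Theorem~\ref{thm:main result, general}. Since the conclusion of that theorem is a logarithmic-potential statement, the global level-set geometry is already controlled there; we only need to add local Gaussian expansions.

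Each saddle contributes
\begin{equation*}
\sqrt{\frac{2\pi}{n|S_n''(u)|}}\;e^{nS_n(u)}\,e^{wu}\,u^{-d-1}\,(1+o(1)) .
\end{equation*}
Since $f_n$ is real, the two contributions are complex conjugates up to $o(1)$ errors. Their sum is therefore $2\Re$ of one of them, which gives
\begin{equation*}
C_{E,n}\,|G(E)|^{-d}\,e^{w\Re G(E)}\cos\bigl(B_{E,n}+w\Im G(E)-d\arg G(E)\bigr),
\end{equation*}
with $C_{E,n}$ absorbing $\frac{(n+d)!}{n^{n+d}}$, $|e^{nS_n}|$, the Gaussian prefactor and $|u|^{-1}$, and with $B_{E,n}=n\Im S_n(u_n)+\text{const}$. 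The sign of $\Im G$ is fixed by the choice of saddle, and we replace $u_n$ by $G(E)$ in the $w$ and $d$ terms at the cost of $o(1)$ relative error. The claimed uniformity in $w$ on compact sets is immediate because $w$ enters only through the bounded factor $e^{wu}$. Note that the asymptotic for a cosine must be read as $A=C(\cos(\cdot)+o(1))$ near its zeros. For non-real $f_n$ ($M_n>0$), conjugate symmetry is only approximate, and the errors $M_n=o(n)$ must be shown to be negligible.

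The main obstacle is the global contour deformation with uniform control. We must rule out competing saddles and show that the tails of the contour (large $|u|$ and near $0$) are exponentially smaller, uniformly in $n$, when $f_n$ only converges through the vague limit of $\Sigma_n$. We must also establish $S_n''(u_n)\to S''(G(E))\ne0$, which uses the bounded density (hence $G'(E+\ii0)$ finite and the saddle nondegenerate, since $S''=R'-1/u^2$ and $R'(G)=1/G'+1/G^2$). I would first try to reuse the contour built in the proof of Theorem~\ref{thm:main result, general} verbatim, since it already separates the dominant saddles, and only refine the local analysis.
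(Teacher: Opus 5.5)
Your proposal is correct and uses the same ingredients as the paper: the saddle $u_n^*(E)\to G(E+\ii0)$, a contour on which the real part of the phase peaks only at $u_n^*(E)$ and $\overline{u_n^*(E)}$, and $2\Re$ of one Gaussian contribution. Where you differ is in how the microscopic variable is handled.

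\emph{The paper's route.} The paper never reopens the contour integral. It applies part (2) of Theorem \ref{thm:Appell Planc-Roa asymp} as a black box at the moving real point $z=E+w/n$, using its uniformity on compact subsets of the bulk. It then Taylor-expands $n\Phi_n(E+\frac wn)$, where $\Phi_n(z)=zu_n^*(z)+\frac1n\log f_n(nu_n^*(z))-\log u_n^*(z)$. The saddle point equation gives $\Phi_n'(E)=u_n^*(E)$, the quadratic term is $O(w^2/n)$, and the prefactors $\sqrt{(u^*)'(z)}$ and $u_n^*(z)^{-d-1}$ are continuous at $E$.

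\emph{Your route.} You freeze the phase at $E$ and put $e^{wu}$ into the amplitude. The saddle then does not move, $e^{wu_n^*(E)}$ appears directly, and you need neither the envelope-type cancellation nor uniformity of the asymptotics in $z$. The price is re-entering the saddle point estimate. You must check that the extra factor $e^{wu}$, bounded and holomorphic uniformly for $w$ in compacts, leaves the local Gaussian expansion and the exponentially small tail bounds intact; this is routine.

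Your route also works verbatim for $w$ in compact subsets of $\C$. Writing one saddle contribution as $X_ne^{wu_n^*}$, the two contributions combine to $X_ne^{wu_n^*}+\overline{X_n}e^{w\overline{u_n^*}}=2|X_n|e^{w\Re u_n^*}\cos(\arg X_n+w\Im u_n^*)$ as an entire function of $w$. Hurwitz's theorem then gives the lattice description of the zeros. The paper's route does not give this directly, since Theorem \ref{thm:Appell Planc-Roa asymp} is stated separately on compacts of $\C_+$ and of $\R$.

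Your caveat that near zeros of the cosine the asymptotic must be read as $C_{E,n}(\cos(\cdot)+o(1))$ is correct. It applies equally to \eqref{eq:PR asymp bulk}.

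Some pointers should be corrected.
\begin{itemize}
\item The identification of the saddle is Lemma \ref{lem:clean lem_unique_saddlepoint}, not the proof of Theorem \ref{thm:main result, general}.
\item The contour you want to reuse is Proposition \ref{prop:Contour} together with the proof of part (2) of Theorem \ref{thm:Appell Planc-Roa asymp}. The conclusion of Theorem \ref{thm:main result, general} is only weak convergence and carries no information about level sets of $\Re S_n$. Its proof only uses $z\in\C_+$, where the conjugate saddle is strictly lower; at real $E$ you need both conjugate saddles at equal height.
\item $f_n\in\LP_{M_n}$ is by definition a real entire function, since its non-real zeros come in conjugate pairs. So the integrand $f_n(nu)e^{n(E+w/n)u}u^{-n-d-1}$ is exactly conjugate-symmetric for real $w$, and there is no $M_n$-dependent asymmetry to control.
\item $S_n''=1/u^2-R_n'$, so $S''(G(E))=-1/G'(E)$. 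The sign does not matter for nondegeneracy.
\end{itemize}
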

Informally, Theorem \ref{thm:local spacing} states that locally the roots of $A_{n,f_n}$ in the bulk of the limiting measure are very regular, and are asymptotically some linear transformation of the integer lattice $\Z$. Since $\Im G (E+\ii0)=-\pi \frac{\dd\mu_{c,\Sigma}}{\dd E}$ is proportional to the density at $E$, we observe that this lattice of roots becomes denser if the mass at $E$ is higher.	
The constants $C_{E,n}$ and $B_{E,n}$ can be made explicit from our proof, see \eqref{eq:CEn} and \eqref{eq:BEn}.

In the classical example $f_n(x)=\exp(-x^2/2n)$ with corresponding Appell polynomials $A_{n,f_n}(x)=\frac{1}{n^{n/2}}\He_n(\sqrt n x )$, we retrieve the well known Plancherel–Rotach asymptotics \cite{NIST:DLMF}, see Example \ref{ex:Hermite PR check}. We also note that for a fixed function $f$ with regularly varying $n_f$, the scale on which the roots of the Appell polynomials approach perfect spacing is (up to a slowly varying correction) $n^{1/\alpha}$.

\subsection{Results for P\'olya--Schur type operators}
For P\'olya--Schur type operators we will focus on the analogues of Corollary \ref{cor:Free convolution theorem}. We will consider two versions of the result. First, we will consider the special case when $T=f(z\partial_{z})$ for some function $f\in\LP$ with only negative roots. Second, we will consider general $T$ classified by the P\'olya--Schur theorem.\begin{proposition}[P\'olya--Schur Theorem]\label{prop:Polya-Schur}
	Let $T:\mathcal{P}(\C)\rightarrow\mathcal{P}(\C)$ be a linear operator which commutes with the diagonal (in the monomial bases) operator $z\partial_{z}$ and let $\tau_k$ be such that $Tz^k=\tau_kz^k$. Additionally, let $\Phi_{T}$ be the formal power series \begin{equation}\label{eq:Phi_T def}
		\Phi_{T}(z)=\sum_{k=0}^{\infty}\frac{\tau_k}{k!}z^{k}=T\left[\exp( z)\right].
	\end{equation} Then, $T$ preserves real-rootedness if and only if either $\Phi_{T}(z)$ or $\Phi_{T}(-z)$ is an entire function in $\mathcal{LPI}$. Moreover, $T$ maps non-negative roots to real roots if and only if $\Phi_{T}\in\LP$. 
\end{proposition}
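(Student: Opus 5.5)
Since $T$ commutes with $z\partial_z$, it is diagonal in the monomial basis. So $T$ is determined by the sequence $\{\tau_k\}$ with $Tz^k=\tau_k z^k$, and the task is to characterize the real-root-preserving \emph{multiplier sequences}. I would follow the classical route through Jensen polynomials. The key identity is
\begin{equation*}
T\left[(1+z)^n\right]=\sum_{k=0}^n\binom{n}{k}\tau_k z^k=J_{n,\Phi_T}(z),
\end{equation*}
which links $T$ to the Jensen polynomials of $\Phi_T$ from \eqref{eq:formulas for A and J}.

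\emph{Necessity.} Suppose $T$ maps real-rooted polynomials, or merely polynomials with non-negative roots, to real-rooted ones. Applying it to $(1+z)^n$, or to $(1-z)^n$ and reflecting, shows that every $J_{n,\Phi_T}$ is real rooted. I would then show $\Phi_T$ is entire and equal to the local uniform limit of $J_{n,\Phi_T}(z/n)$. This is the mechanism behind Proposition \ref{prop:Jensen polynomial convergence}, but here it must be run from the polynomial side, since we do not yet know $\Phi_T\in\LP_\infty$. Real-rootedness of $J_{n,\Phi_T}$ gives Newton's inequalities $\tau_k^2\geq\tau_{k-1}\tau_{k+1}$ uniformly in $n$. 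These yield growth bounds $|\tau_k|\leq C^k$ (after discarding trailing zeros), so $\Phi_T$ is entire and $J_{n,\Phi_T}(z/n)\to\Phi_T$ locally uniformly. Hurwitz's theorem then places $\Phi_T$ in the closure of real-rooted polynomials, i.e.\ $\Phi_T\in\LP$, which proves the necessity half of the ``moreover'' part.

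For the full real-rootedness statement I additionally test $T$ on $z^k(z^2-1)$, which is real rooted. Its image $z^k(\tau_{k+2}z^2-\tau_k)$ is real rooted only if $\tau_k\tau_{k+2}\geq0$. Combined with log-concavity, this forces the $\tau_k$ either to have constant sign or to alternate, so $\Phi_T(z)$ or $\Phi_T(-z)$ has non-negative Taylor coefficients. An $\LP$ function with non-negative coefficients has no positive zeros. Its Hadamard form must then have $\sigma=0$ and $\sum_j\alpha_j<\infty$; I would derive this from positivity of the $z^2$ coefficient in the Jensen polynomials, using Laguerre's classical argument. This places it in $\mathcal{LPI}$.

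\emph{Sufficiency.} Suppose $\Phi_T\in\mathcal{LPI}$, and take $p(z)=\sum_k a_k z^k$ real rooted of degree $n$. Then $J_{n,\Phi_T}$ has only non-positive roots, as a limit-free consequence of Laguerre's theorem for $\mathcal{LPI}$. Moreover $Tp=\sum_k \tau_k a_k z^k$ is exactly the Schur--Szeg\H{o} (Malo--Schur) composition of $p$ with $J_{n,\Phi_T}$. Grace's apolarity theorem, in the form of the Schur--Malo composition theorem, then says this composition of a real-rooted polynomial with one having only non-positive roots is real rooted. The case $\Phi_T(-z)\in\mathcal{LPI}$ follows by the substitution $z\mapsto -z$.

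For the ``moreover'' direction, assume only $\Phi_T\in\LP$, so that $J_{n,\Phi_T}$ is merely real rooted, and take $p$ with non-negative roots. The roles swap, and the same composition theorem with the hypotheses exchanged gives real roots.

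The main obstacle I expect is this sufficiency step: invoking Grace's theorem cleanly, and carefully handling degree drops when some $\tau_k$ vanish, since then $T$ lowers the degree and the composition formula needs the trailing-zero bookkeeping. On the necessity side, the delicate point is the Laguerre argument ruling out a Gaussian factor for coefficient-positive $\LP$ functions.
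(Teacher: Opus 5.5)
\textbf{Gap: making $\Phi_T$ entire in the ``moreover'' part.} The gap is in the necessity step where you make $\Phi_T$ entire. You claim Newton's inequalities $\tau_k^2\ge\tau_{k-1}\tau_{k+1}$ give $|\tau_k|\le C^k$. They do not control growth once the $\tau_k$ are not of one sign, and the bound is false under the hypothesis of the ``moreover'' part. Take $\Phi_T(z)=e^{-z^2}\in\LP$, which gives an admissible $T$. Then $\tau_{2m}=(-1)^m(2m)!/m!\ge m^m$. More generally, no $\LP$ function of order $2$ has exponentially bounded $\tau_k$.

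Even with constant signs, your ratio argument needs the absence of interior zeros, which Newton alone does not give: the sequence $1,0,0,1$ satisfies it. In the main part the exponential bound is true, but only after you have the sign pattern, which you establish afterwards.

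What you need instead is Laguerre's estimate. Normalize $\tau_0=1$ and write $J_{n,\Phi_T}(z/n)=\prod_j(1-x_jz)$ with real $x_j$. Then $\sum_jx_j=-\tau_1$ and $\sum_jx_j^2=\tau_1^2-\frac{n-1}{n}\tau_2$. Together with $|1-xz|^2\le\exp(-2x\Re z+x^2|z|^2)$ this gives, uniformly in $n$,
\begin{equation*}
|J_{n,\Phi_T}(z/n)|\le\exp\Big(|\tau_1||z|+\tfrac12\big(\tau_1^2+|\tau_2|\big)|z|^2\Big).
\end{equation*}
Montel's theorem and the coefficient limits $\binom nk\tau_kn^{-k}\to\tau_k/k!$ then show that $\Phi_T$ is entire and that $J_{n,\Phi_T}(z/n)\to\Phi_T$ locally uniformly. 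Hence $\Phi_T\in\LP$.

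\textbf{Smaller points in the main part.} First, your sign-pattern step from $\tau_k\tau_{k+2}\ge0$ only links the parity classes if there are no interior zero coefficients, and this needs an argument. If $\tau_k=0$ is interior, Newton gives $\tau_{k-1}\tau_{k+1}\le0$, so combined with your test one neighbour vanishes. Two consecutive vanishing coefficients force $J_{n,\Phi_T}^{(k)}$ to have a double zero at $0$. For a real-rooted polynomial this means $J_{n,\Phi_T}$ itself vanishes to order $k+2$ there, which contradicts interiority.

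Second, positivity of the $z^2$ coefficient cannot exclude the Gaussian factor. For $e^{bz-\sigma^2z^2/2}\prod_j(1+z/x_j)e^{-z/x_j}$ it only yields $b^2\ge\sigma^2+\sum_jx_j^{-2}$. The operative constraint is the first coefficient together with one-signed roots. Then $\sum_j|x_j|=|\tau_1|$, so $|J_{n,\Phi_T}(z/n)|\le e^{|\tau_1||z|}$ and the limit is of type I. This is exactly the AM--GM step in the paper's appendix proof of Lemma \ref{lem:PS for rect}.

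\textbf{Sufficiency.} This half is sound, and it is the mechanism the paper uses (the paper itself only cites this proposition). Your Schur--Szeg\H{o} composition with the Jensen polynomial is the identity $Tp_n=p_n\boxtimes_nT(z-1)^n$ of Section \ref{sec:Polyaschur}. Root preservation is Theorem 1.16 of \cite{Marcus-Spielman-Srivastava2022}, which the appendix also invokes. Degree drops are harmless: compose with $J_{n,\Phi_T}(z)(1\pm\eps z)^{n-m}$, choosing the sign to match its roots, and let $\eps\to0$ using Hurwitz. Finally, in the paper's convention $\mathcal{LPI}$ functions have positive zeros, but the ``either $\Phi_T(z)$ or $\Phi_T(-z)$'' makes this immaterial.
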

	To classify the limiting distribution we will need to briefly introduce the free multiplicative distribution $\boxtimes$. To this end define the $\psi$-transform of a probability measure $\mu$ on $[0,\infty)$ to be the function $\psi_\mu$  on $\C\setminus\{z:z^{-1}\in\supp\mu\}$ given by 
\begin{equation}
	\psi_\mu(z)=\int_{0}^{\infty}\frac{uz}{1-uz}\dd\mu(u).
\end{equation} If $\mu\neq\delta_{0}$, $\psi_\mu$ has an inverse defined on a neighborhood of $(-1+\mu(\{0\}),0)$ and the $S$-transform of $\mu\neq \delta_{0}$ is the function defined by \begin{equation}
	S_{\mu}(z)=\frac{1+z}{z}\psi_\mu^{-1}(z),
\end{equation}  for $z\in(-1+\mu(\{0\}),0)$.  The free multiplicative convolution of two probability measures,  $\mu_{1}\neq \delta_0$ and $\mu_{2}\neq\delta_{0}$, on $[0,\infty)$ is the probability  measure $\mu_{1}\boxtimes\mu_{2}$ with $S$-transform \begin{equation}
	S_{\mu_{1}\boxtimes\mu_{2}}(z)=S_{\mu_{1}}(z)S_{\mu_{2}}(z),
\end{equation} for $z$ on some common interval $(-\eps,0)$ with $S_{\mu_{1}}$ and $S_{\mu_{2}}$ both defined. A probability measure $\mu$ on $[0,\infty)$ is $\boxtimes$-infinitely divisible (or $\boxtimes$-ID) if $\mu^{\boxtimes t}$ exists for all $t>0$. 

\begin{theorem}\label{thm:Polya schur and multiplicative groups}
	Let $\{f_n\}\subset\LP$ be a sequence of functions with only negative roots and which satisfies  Assumption \ref{assump:main assumption}. If $t\geq 0$ and $\{p_n\}_{n\geq 1}$ is a sequence of positively-rooted polynomials indexed by their degree such that $\llbracket p_n\rrbracket\Rightarrow\mu$ as $n\rightarrow\infty$, then \begin{equation}
		\left\llbracket f_{\lfloor tn\rfloor}(tz\partial_{z})p_n(z)\right\rrbracket\Rightarrow \mu\boxtimes \tilde{\mu}_{c,\Sigma}^{\boxtimes t},
	\end{equation} where $\tilde{\mu}_{c,\Sigma}$ is the $\boxtimes$-infinitely divisible probability measure on $[0,\infty)$ with $S$-transform \begin{equation}
		S_{\tilde\mu_{c,\Sigma}}(z)=\exp\left(-R_{\mu_{c,\Sigma}}(z+1) \right),
	\end{equation} for $z\in (-1,0)$. 
\end{theorem}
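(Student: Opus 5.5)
We reduce Theorem \ref{thm:Polya schur and multiplicative groups} to a multiplicative finite free convolution identity, combined with an Appell/Jensen-type limit in the spirit of Theorem \ref{thm:main result, general}.

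\textbf{Step 1: $T$ acts as a multiplicative finite free convolution.} Write $T_n=f_{\lfloor tn\rfloor}(tz\partial_z)$. This operator is diagonal in the monomial basis, with $T_nz^k=f_{\lfloor tn\rfloor}(tk)z^k$. For any operator $T$ diagonal in the monomial basis and $p$ of degree $n$, we have the identity $Tp=p\boxtimes_n q_T$, where $\boxtimes_n$ is the Marcus--Spielman--Srivastava multiplicative convolution and $q_T(z)=\sum_k(-1)^{k}\binom nk\tau_{n-k}\ldots$ is the polynomial whose normalized coefficients are $\tau_k$. Concretely,
\[
q_n(z)=\sum_{k=0}^n\binom nk f_{\lfloor tn\rfloor}(tk)\,z^k
\]
(up to reflection $z\mapsto -z$ and normalization). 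Since $f_n\in\LP$ has only negative roots, it is of type $\mathcal{LPI}$ after $z\mapsto -z$, so by Proposition \ref{prop:Polya-Schur} $q_n$ has nonnegative roots. The finite free multiplicative convolution of positively rooted polynomials then converges to $\boxtimes$ of the limits (Arizmendi--Garza-Vargas--Perales type results, recalled in \S\ref{sec:finite_free_probability}). Granting this, it suffices to identify $\lim\llbracket q_n\rrbracket=\tilde\mu_{c,\Sigma}^{\boxtimes t}$.

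\textbf{Step 2: Limit of $q_n$ via the saddle point method.} The polynomial $q_n$ is a ``Jensen-type'' polynomial with coefficients $f_m(tk)$, $m=\lfloor tn\rfloor$, sampled on the scale $k\sim n$. Write $q_n(x)=\frac{n!}{2\pi\ii}\oint \frac{(1+w)^n}{w^{n+1}}\ldots$, or better, represent the coefficient sequence through the integral $\binom nk=\frac{1}{2\pi\ii}\oint (1+w)^n w^{-k-1}dw$, which gives a double contour representation. The exponent then has the form $n\,[\log(1+w)-s\log w+\tfrac1n\log f_m(tns)]$. Under Assumption \ref{assump:main assumption}, $\frac{d}{du}\frac1m\log f_m(mu)\to -R_{\mu_{c,\Sigma}}(u)$ locally uniformly off the real axis, just as in the proof of Theorem \ref{thm:main result, general}. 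The saddle point equation then yields the log-potential (equivalently the $\psi$-transform) of the limit. Matching this against $S$-transforms, we must check that the resulting $\psi^{-1}$ satisfies
\[
S(z)=\exp\bigl(-tR_{\mu_{c,\Sigma}}(z+1)\bigr),
\]
which by multiplicativity of $S$ is exactly $S_{\tilde\mu_{c,\Sigma}^{\boxtimes t}}$. A sanity check is $f=e^{-cz}$: here $q_n$ has all roots at $e^{-tc}$, consistent with $S=e^{-tc}$.

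\textbf{Alternative route and the main obstacle.} A cleaner alternative to Step 2 is to note that $\log$ of the coefficient profile, namely $k\mapsto\log f_m(tk)$, is exactly what enters the known criterion (Arizmendi--Fujie--Perales--Ueda, via ``finite $S$-transforms'') for the root distribution of a polynomial with coefficients $\binom nk a_k$. That criterion states that $a_{k+1}/a_k$ approximates $1/S(-k/n)$-type ratios. Here
\[
\frac{f_m(t(k+1))}{f_m(tk)}\approx \exp\Bigl(t\,\frac{f_m'}{f_m}(tk)\Bigr)\to\exp\bigl(-tR(k/n\cdot\ldots)\bigr),
\]
which after the correct index shift gives $R(z+1)$. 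I expect the main obstacle to be justifying the convergence of $\frac{f_m'}{f_m}(mu)$ on the positive real axis. This is where the zeros of $f_m$ could accumulate, but negativity of the roots places them at $u<0$, so the convergence holds uniformly on compact subsets of $(0,\infty)$. A second difficulty is verifying that the resulting $\boxtimes$-ID measure $\tilde\mu$ exists and is not $\delta_0$. We would handle the limits $t=0$ and the boundary $z\to -1$ separately, using tightness from positivity of the roots.
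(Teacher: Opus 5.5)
Your ``alternative route'' is essentially the paper's proof, and the saddle-point Step~2 can be dropped. The paper writes $f_n(z\partial_z)p_n=p_n\boxtimes_n A^*_{n,f_n}$ with $A^*_{n,f_n}(z)=f_n(z\partial_z)(z-1)^n=(-1)^n\sum_k(-1)^k\binom nk f_n(k)z^k$, and applies Lemma~\ref{lem: mult convolution convergence}. It then identifies $\lim\llbracket A^*_{n,f_n}\rrbracket$ through the exponential-profile Lemma~\ref{lem:exp prof of conv power} with $b_{k,n}^n=f_n(k)/f_n(n)$. This uses $\frac1n\log\frac{f_n(nu)}{f_n(n)}\to\tilde g(u)$ with $\tilde g'=-R_{\mu_{c,\Sigma}}$ for $u>0$, which is valid exactly because the roots are negative, as you note. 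Your ratio statement $f_{\lfloor tn\rfloor}(t(k+1))/f_{\lfloor tn\rfloor}(tk)\to\exp(-tR_{\mu_{c,\Sigma}}(1+z))$ for $k/n\to 1+z$ is the derivative form of the same profile statement. Treating general $t$ directly, rather than reducing to $t=1$, is fine.

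Two steps need repair.

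\emph{Positivity of the roots.} The roots of $T_n(z-1)^n$ being positive does not follow from Proposition~\ref{prop:Polya-Schur} as you argue. That theorem concerns $\Phi_{T_n}(z)=\sum_k f_{\lfloor tn\rfloor}(tk)z^k/k!$, not $f_{\lfloor tn\rfloor}$ itself. Moreover, $f_n$ may carry a Gaussian factor $e^{-\sigma_n^2z^2/2}$, so it is not in $\mathcal{LPI}$ even after reflection. Use Laguerre's theorem (Lemma~\ref{lem:Lag theorem}) for real-rootedness. Since $f_{\lfloor tn\rfloor}>0$ on $[0,\infty)$, the coefficients then strictly alternate, so all roots are positive.

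\emph{Existence and infinite divisibility of $\tilde\mu_{c,\Sigma}$.} You leave this open, and ``tightness from positivity of the roots'' does not work: positive roots can still escape to $+\infty$. If you use a finite $S$-transform criterion formulated for a given target measure, you must construct $\tilde\mu_{c,\Sigma}$ in advance. Lemma~\ref{lem:exp prof of conv power} avoids this, because it itself produces the limit measure with $S$-transform $e^{\tilde g'(1+z)}$. The paper then verifies infinite divisibility via the Bercovici--Voiculescu criterion $S_\mu\bigl(\frac{z}{1-z}\bigr)=e^{v(z)}$ with $v(z)=-R_{\mu_{c,\Sigma}}\bigl(\frac{1}{1-z}\bigr)$. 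This $v$ is analytic off $[0,\infty)$, real, and maps $\C_-$ into $\C_+\cup\R$, because $\Sigma$ is supported on $(-\infty,0]$. Alternatively, once existence of the limits is secured, your argument yields for every $t>0$ a limit with $S$-transform $S_{\tilde\mu_{c,\Sigma}}^t$. So all $\boxtimes$-powers exist, which is infinite divisibility by definition.

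(A small slip in your sanity check: $(e^{-tc}z-1)^n$ has its roots at $e^{tc}$, not $e^{-tc}$, and $S_{\delta_\lambda}=1/\lambda$.)
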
  

The special cases when $f(z)=e^{-z^2+z}$ or  $f(z)=(z+b)^{c}$ were previously studied in \cite{Jalowy-Kabluchko-Marynych2025part2,Kabluchko2026} under the names of multiplicative Hermite and multiplicative Laguerre polynomials, respectively. 

A definition for $\mu\boxtimes\nu$ where only $\nu$ (or $\mu$ by symmetry) is assumed to be supported on $[0,\infty)$ has been available using operator algebras since \cite{Voiculescu1987}, however a definition in terms of $S$-transforms was not available until the recent work of Arizmendi, Hasebe, and Kitagawa \cite{Arizmendi-Hasebe-Kitagawa2026}. For our next result, we will use that $\mu\boxtimes\nu$ exists when at least one is supported on $[0,\infty)$, and encourage the reader to see \cite{Arizmendi-Hasebe-Kitagawa2026} for a precise definition when one of the supports contains both positive and negative numbers. 

\begin{theorem}\label{thm:Polya-Schur full version}
	Let $\{T_{n}\}$ be a series of operators which commute with $z\partial_z$  such that $\Phi_{T_n}\in\mathcal{LP}$ as defined in \eqref{eq:Phi_T def}. If $\{p_n\}_{n\geq 1}$ is a sequence of positively-rooted polynomials indexed by their degree, with uniformly bounded roots such that $\llbracket p_n\rrbracket\Rightarrow\mu$ as $n\rightarrow\infty$, and $\{\Phi_{T_n}\}$ satisfy Assumption \ref{assump:main assumption}, then \begin{equation}
		\left\llbracket T_{n}p_n \right\rrbracket\Rightarrow \mu\boxtimes \mathcal{D}_{-1}\mu_{c,\Sigma}^{-1},
	\end{equation} where $\mu_{c,\Sigma}$ is the $\boxplus$-ID limit of the Appell polynomials $A_{n,\Phi_{T_n}}$ from Theorem \ref{thm:main result, general}.
\end{theorem}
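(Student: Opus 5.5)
The plan is to recognise $T_np_n$ as a finite free multiplicative convolution of $p_n$ with a Jensen polynomial. After that, Corollary \ref{cor:Convergnce of Jensen polynomials} gives the limiting root law of that Jensen polynomial, and a limit theorem for finite free multiplicative convolution carries the result to $\mu\boxtimes\mathcal{D}_{-1}\mu_{c,\Sigma}^{-1}$.

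\textbf{Algebraic identification.} Write $p_n(z)=\sum_{k=0}^n a_k z^k$. Since $T_nz^k=\tau_{k,n}z^k$, we get
\begin{equation*}
T_np_n(z)=\sum_{k=0}^n a_k\tau_{k,n}z^k .
\end{equation*}
By \eqref{eq:formulas for A and J} applied to $\Phi_{T_n}$, whose Taylor coefficients are $\gamma_k=\tau_{k,n}$, we have $J_{n,\Phi_{T_n}}(z)=\sum_k\binom{n}{k}\tau_{k,n}z^k$. Hence the coefficients of $T_np_n$ are $a_k\cdot\bigl[\binom nk\tau_{k,n}\bigr]/\binom nk$. This is exactly the Schur--Szeg\H{o} composition, i.e.\ the finite free multiplicative convolution $\boxtimes_n$ of Marcus--Spielman--Srivastava.

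I would fix the sign convention carefully, since $\boxtimes_n$ is usually written with coefficients $(-1)^{n-k}e_{n-k}$. With that convention, $T_np_n=p_n\boxtimes_n Q_n$ with $Q_n(z)=J_{n,\Phi_{T_n}}(-z)$ up to a sign and leading constant. The reflection $z\mapsto -z$ is precisely the source of the $\mathcal{D}_{-1}$ in the statement.

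\textbf{Root law of $Q_n$ and real-rootedness.} Since $\Phi_{T_n}\in\LP$, Proposition \ref{prop:Jensen polynomial convergence} shows that $A_{n,\Phi_{T_n}}$ is real-rooted. Therefore $J_{n,\Phi_{T_n}}$ and $Q_n$ are real-rooted as well; roots at infinity, arising from degree drop when $\tau_{n,n}=0$, would need a separate word. By Corollary \ref{cor:Convergnce of Jensen polynomials}, $\llbracket J_{n,\Phi_{T_n}}\rrbracket\Rightarrow\mu_{c,\Sigma}^{-1}$, and so $\llbracket Q_n\rrbracket\Rightarrow\mathcal{D}_{-1}\mu_{c,\Sigma}^{-1}$.

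Since $p_n$ has nonnegative roots and $Q_n$ is real-rooted, the polynomial $p_n\boxtimes_nQ_n$ is real-rooted. This is a classical Schur--Szeg\H{o}/MSS fact, and it is consistent with Proposition \ref{prop:Polya-Schur}. Consequently all measures involved live on $\R$.

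\textbf{Passage to the limit (main obstacle).} It remains to show that $\llbracket p_n\boxtimes_nQ_n\rrbracket\Rightarrow\mu\boxtimes\nu$ when $\llbracket p_n\rrbracket\Rightarrow\mu$ with $p_n$ having roots in a fixed $[0,K]$ and $\llbracket Q_n\rrbracket\Rightarrow\nu$. The difficulty is that $\nu=\mathcal{D}_{-1}\mu_{c,\Sigma}^{-1}$ may have non-compact support and both signs, so the moment-based results of Arizmendi--Garza-Vargas--Perales do not apply directly.

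My first attempt would be a truncation argument. Write $Q_n=\prod(z-y_i)$ and replace every root with $|y_i|>R$ by $\pm R$, obtaining $Q_n^{R}$. Since all roots of $p_n$ are nonnegative and bounded, $\boxtimes_n$ should be monotone in each root of the second argument (an interlacing property of Schur--Szeg\H{o} composition). The number of modified roots is $n\,\nu(\{|x|>R\})+o(n)$, so by interlacing the root counting functions of $p_n\boxtimes_nQ_n$ and $p_n\boxtimes_nQ_n^R$ differ by at most that many in Kolmogorov distance.

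For $Q_n^R$, both polynomials have uniformly bounded roots and one factor is nonnegative, so the compact-support limit theorem (via finite $S$-transforms or moments, cf.\ Arizmendi--Hasebe--Kitagawa) gives convergence to $\mu\boxtimes\nu^R$. Letting $R\to\infty$ and using weak continuity of $\boxtimes$ in the argument whose support is not restricted to $[0,\infty)$, as provided by \cite{Arizmendi-Hasebe-Kitagawa2026}, yields $\mu\boxtimes\nu$.

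The delicate points are the interlacing/monotonicity bound for mixed-sign roots and the continuity of $\boxtimes$ for non-compact $\nu$. If the monotonicity fails, I would instead compare Cauchy transforms directly via the finite $S$-transform on a neighbourhood of $(-1,0)$.
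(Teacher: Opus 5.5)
Your proposal is correct, and its first two steps are exactly the paper's. The paper writes $T_np_n=p_n\boxtimes_n(-1)^nJ_{n,\Phi_{T_n}}(-z)$ and takes the limit law $\mathcal{D}_{-1}\mu_{c,\Sigma}^{-1}$ of the second factor from Corollary~\ref{cor:Convergnce of Jensen polynomials}.

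You differ in the step you call the main obstacle. The paper does not truncate. It simply invokes Lemma~\ref{lem: mult convolution convergence Fujie} (Fujie's Theorem~4.5, stated in the background section). That lemma already gives $\llbracket p_n\boxtimes_n q_n\rrbracket\Rightarrow\mu\boxtimes\nu$ for $p_n$ with nonnegative uniformly bounded roots and $q_n$ merely real-rooted, with no support restriction on $\nu$. It is also the reason the bound on the roots of $p_n$ appears in the statement.

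Your truncation is a workable independent derivation of that case from the bounded one, and both points you flag do hold:
\begin{itemize}
\item \emph{Interlacing.} The map $q\mapsto p_n\boxtimes_n q$ is linear and preserves real-rootedness. Also $\alpha(z-a)r+\beta(z-b)r=r\cdot\bigl((\alpha+\beta)z-\alpha a-\beta b\bigr)$ is real-rooted for all real $\alpha,\beta$. So by Hermite--Kakeya--Obreschkoff the images interlace, and each moved root shifts the root counting function by at most one.
\item \emph{Continuity.} $\mu\boxtimes\nu$ is the law of $a^{1/2}ba^{1/2}$ with $a\ge0$ bounded. Replacing $b$ by its clamp at $\pm R$ perturbs this operator by one whose support projection has trace at most $\nu(\{|x|>R\})$. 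This bounds the Kolmogorov distance between $\mu\boxtimes\nu^R$ and $\mu\boxtimes\nu$.
\end{itemize}
For the bounded step, the natural reference is the moment/finite-cumulant result of \cite{Arizmendi-Perales2018} rather than \cite{Arizmendi-Hasebe-Kitagawa2026}.

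The paper's route gives a one-line proof. Yours is self-contained modulo the classical bounded case. However, it still uses the bounded roots of $p_n$ in that step, so it does not remove that hypothesis.

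Finally, your caveat about roots at infinity is legitimate, and the paper passes over it. The lost mass is $o(1)$ when $\mu_{c,\Sigma}(\{0\})=0$. But take $\Phi_{T_n}(z)=(1-z/n)^{\lfloor\lambda n\rfloor}$ with $\lambda<1$. This satisfies Assumption~\ref{assump:main assumption} with a free Poisson limit having an atom at $0$, and then $T_np_n$ has degree at most $\lambda n$. So the statement tacitly needs $\mu_{c,\Sigma}(\{0\})=0$.
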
 The uniform bound on the roots of $p_n$ is only technical, and a result of the current state of the art on understanding the finite free multiplicative convolution. We believe the same result should hold for polynomials $p_n$ with unbounded positive roots. For polynomials $p_n$ with both positive and negative roots, one would need to add the assumption that $\Phi_{T_n}\in\mathcal{LPI}$. 

\begin{remark}\label{rem:optimality}
 The polynomials considered in Theorem \ref{thm:Polya schur and multiplicative groups} have only non-negative roots and the theory of \emph{exponential profiles} \cite[Theorems 2.2 and 2.4]{Jalowy-Kabluchko-Marynych2025} provides necessary and sufficient conditions for the empirical root measures of polynomials with non-negative roots to converge, in terms of convergence of these exponential profiles. For functions $f_n$ satisfying the conditions of Theorem \ref{thm:Polya schur and multiplicative groups}, one of our technical results, Proposition \ref{prop:convergence of R-transforms}, is equivalent to Assumption \ref{assump:main assumption} and can be used to prove convergence of the exponential profile. Thus, Assumption \ref{assump:main assumption} is necessary and sufficient for Theorem \ref{thm:Polya schur and multiplicative groups}. We believe that the same is true for Theorems \ref{thm:main result, general} and \ref{thm:Polya-Schur full version} as well.
\end{remark}

\subsection{Results for rectangular differentiation}  We next consider a class of operators $T$ which do not commute with $\partial_{z}$ nor $z\partial_{z}$. These operators are thus not covered by the P\'olya--Benz nor P\'olya--Schur theorems, but we can still express the effect on zero distributions in terms of free probability. To define these operators we define a differential operator $\mathscr{L}_{\beta}$, which we refer to as \emph{rectangular differentiation}, by 
\begin{equation}\label{eq:M def}
	\mathscr{L}_{\beta}=z\partial_{z}^2+(1+\beta)\partial_{z},
\end{equation} for $\beta>-1$. The ``rectangular'' in the name is explained in \S \ref{sec:finite_free_probability}. We will consider operators of the form $T=f(\mathscr{L}_{\beta})$, where $f\in\mathcal{LPI}$. It follows from Lemma \ref{lem:PS for rect}  that these are all the operators which commute with $\mathscr{L}_{\beta}$ such that $T[z^n]$ has only non-negative roots. 
\begin{theorem}\label{thm:rectangular Appell polynomials}
	Let $\{f_{n}\}\subset\mathcal{LPI}$ and satisfy Assumption \ref{assump:main assumption}. If $\beta\equiv\beta_n$ and $\frac{n}{n+\beta}\rightarrow \lambda\in (0,1]$, then \begin{equation}
		\left\llbracket f_n\left(\frac{\lambda}{n}\mathscr{L}_\beta\right)z^{n}\right\rrbracket\Rightarrow\mathsf{MP}_{1,\lambda}\boxtimes \mu_{c,\Sigma},
	\end{equation} 
	 where $\mu_{c,\Sigma}$ is the $\boxplus$-ID limit of the Appell polynomials $A_{n,g_n}$ from Theorem \ref{thm:main result, general} and $\mathsf{MP}_{1,\lambda}$ is the standard Marchenko--Pastur distribution with density 
	\begin{align*}
		\dd\mathsf{MP}_{1,\lambda}(x)
		=\frac 1 {2\pi\lambda x} \sqrt{\big((1+\sqrt{\lambda})^2-x\big) \big(x-(1-\sqrt{\lambda}\big)^2  }\big)\dd x \text{ on } x\in[(1-\sqrt{\lambda})^2, (1+\sqrt{\lambda})^2].
	\end{align*}
\end{theorem}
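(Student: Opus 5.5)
We plan to reduce Theorem \ref{thm:rectangular Appell polynomials} to Theorem \ref{thm:main result, general} using the finite free rectangular/multiplicative structure of $\mathscr{L}_\beta$ on monomials. On monomials,
\[
\mathscr{L}_\beta z^k=k(k+\beta)z^{k-1}.
\]
Hence, writing $f_n(z)=\sum_k \frac{\gamma_{k,n}}{k!}z^k$,
\[
f_n\Bigl(\tfrac{\lambda}{n}\mathscr{L}_\beta\Bigr)z^n=\sum_{k=0}^n \frac{\gamma_{k,n}}{k!}\Bigl(\frac{\lambda}{n}\Bigr)^k\frac{n!}{(n-k)!}\,\frac{\Gamma(n+\beta+1)}{\Gamma(n-k+\beta+1)}\,z^{n-k}.
\]
Compare this with the Appell polynomial $A_{n,g_n}(z)=\sum_k \gamma_{k,n}\binom nk z^{n-k}$. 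Here $g_n$ is the appropriate rescaling of $f_n$ for which Assumption \ref{assump:main assumption} holds and Theorem \ref{thm:main result, general} gives $\llbracket A_{n,g_n}\rrbracket\Rightarrow\mu_{c,\Sigma}$.

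The coefficients of our polynomial are those of $A_{n,g_n}$ multiplied coefficientwise by
\[
\Bigl(\frac{\lambda}{n}\Bigr)^k\frac{(n+\beta)!}{(n-k+\beta)!},
\]
which, up to normalization, are the coefficients of the Laguerre polynomial $L_n^{(\beta)}$, or equivalently of $\mathscr{L}_\beta$-powers applied to $(z-1)^n$ type polynomials. Since the finite free multiplicative convolution $\boxtimes_n$ is precisely coefficientwise multiplication (after dividing by $\binom nk$), we will identify
\[
f_n\Bigl(\tfrac{\lambda}{n}\mathscr{L}_\beta\Bigr)z^n=A_{n,g_n}\boxtimes_n \widetilde L_n ,
\]
where $\widetilde L_n$ is a suitably rescaled Laguerre polynomial with positive roots. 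It is classical that the roots of such a Laguerre polynomial converge to $\mathsf{MP}_{1,\lambda}$ when $n/(n+\beta)\to\lambda$. We expect the normalization $\lambda/n$ to be tuned exactly so that $\widetilde L_n$ has mean root $1$, which is the standard Marchenko--Pastur normalization.

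Next we invoke convergence of the finite free multiplicative convolution to $\boxtimes$. Because $f_n\in\mathcal{LPI}$, the polynomial $A_{n,g_n}$ has real roots of one sign, and $\widetilde L_n$ has nonnegative roots bounded uniformly (their extreme roots converge to $(1\pm\sqrt\lambda)^2$). The same finite-to-infinite multiplicative convolution limit used for Theorem \ref{thm:Polya-Schur full version} (the Arizmendi--Garza-Vargas--Perales type result, which requires one factor to have uniformly bounded nonnegative roots) then gives
\[
\llbracket A_{n,g_n}\boxtimes_n\widetilde L_n\rrbracket\Rightarrow\mu_{c,\Sigma}\boxtimes\mathsf{MP}_{1,\lambda}.
\]

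We expect the main obstacle to be this last step. $\mu_{c,\Sigma}$ may have unbounded support and heavy tails, so the known convergence results for $\boxtimes_n$, which are typically proven for compact supports via moments or cumulants, do not directly apply. Our first attempt will be a truncation argument. We replace $g_n$ by functions whose $\Sigma_n$ is restricted to $[-K,K]$, for which the Appell roots are essentially bounded, and control the effect on the empirical measure by monotonicity and interlacing of real roots under $\boxtimes_n$ with a positive-rooted factor. We then let $K\to\infty$ using the weak continuity of $\boxtimes$ and of $(c,\Sigma)\mapsto\mu_{c,\Sigma}$. If truncation is too lossy, the fallback is to redo the saddle point analysis of Theorem \ref{thm:main result, general} directly on the Laguerre-weighted coefficient sum. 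In that analysis the extra Gamma-ratio factor contributes an explicit term to the phase function, which should reproduce the $S$-transform product of $\mathsf{MP}_{1,\lambda}$ and $\mu_{c,\Sigma}$.
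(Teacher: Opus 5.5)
Your proposal is correct and is essentially the paper's proof. The paper makes the same identification $f_n\bigl(\tfrac{\lambda}{n}\mathscr{L}_\beta\bigr)z^n=n!\bigl(\tfrac{\lambda}{n}\bigr)^n(-1)^nL_n^{(\beta)}\bigl(\tfrac{n}{\lambda}z\bigr)\boxtimes_n A_{n,f_n}(z)$ with $g_n=f_n$ (no rescaling is needed), and combines the classical Marchenko--Pastur limit for the Laguerre factor with Theorem \ref{thm:main result, general} and a $\boxtimes_n\to\boxtimes$ convergence lemma. The ``main obstacle'' you anticipate does not arise: Lemma \ref{lem: mult convolution convergence}, which the paper uses, needs only positively rooted sequences with weak limits and no compact support, and Lemma \ref{lem: mult convolution convergence Fujie} needs bounded roots only in the Laguerre factor, so your truncation and saddle-point fallbacks are unnecessary.
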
 
 The case of $f_n(z)=\exp(-cz)$ has been studied in \cite[Theorem 5.14, Case I]{Jalowy-Kabluchko-Marynych2025part2}. Theorem \ref{thm:rectangular Appell polynomials} describes the zero distribution of these operators applied to $z^n$, and again from \cite[Theorems 2.2 and 2.4]{Jalowy-Kabluchko-Marynych2025} one can conclude that Assumption \ref{assump:main assumption} is both necessary and sufficient for the result to hold. It should be possible using these exponential profiles, identities from free probability, and a more involved version of Lemma \ref{lem:PS for rect} below to extend this result to general $p_n$ with non-negative roots, similar to Corollary \ref{cor:Free convolution theorem} or Theorem \ref{thm:Polya schur and multiplicative groups}. However, this would require several pages of computations, and we believe this is better left as specific case of a much more general result which can be obtained by a better understanding of the \emph{finite free rectangular convolution} $\boxplus_{n}^{\beta}$ of \cite{Gribinski2024} for non-integer $\beta$. We leave this to future work.  
 
 \subsection{Outline of the proofs}\label{sec:outline_of_proof} The proof of Theorem \ref{thm:main result, general} will follow from the representation of any Appell polynomial in terms of $f$, given by \begin{equation}\label{eq:Appell contour}
 	\begin{aligned}
 		A_{n+d,f}(z)&=f(\partial_{z})z^{n+d}\\
 		&=f(\partial_{z})\frac{(n+d)!}{2\pi\ii}\oint_{\Gamma}e^{xz}\frac{\dd x}{x^{n+d+1}}\\
 		&=\frac{(n+d)!}{2\pi\ii}\oint_{\Gamma} f(x)e^{xz}\frac{\dd x}{x^{n+d+1}},
 	\end{aligned} 
 \end{equation} where $\Gamma$ is any simple closed contour around the origin. The idea behind the saddle point method is to choose the contour $\Gamma$ such that $\left|f(x)e^{xz}/x^{n}\right|$ is maximized on $\Gamma$ at a single point $x_n^*$  as $n\rightarrow\infty$, and that the integral is asymptotic to the contribution in a neighborhood of this point. Holomorphic functions do not have local maximums, but the point $x_n^*$ should still be at a critical point of $f(x)e^{zx}/x^{n}$. Solving for this critical point we are looking for solutions to \begin{equation*}
 \frac{f(x)e^{xz}}{x^n}n\left(\frac{z}{n}+\frac{1}{n}\frac{f'(x)}{f(x)}-\frac{1}{x}\right)=0.
 \end{equation*} This equation likely has an infinite number of solutions.  If $f\in\LP$, then   $\frac{1}{n}\frac{f'(x)}{f(x)}$ is the $R$-transform of the $\boxplus$-ID measure $\mu_{f,n}=\mu_{f}^{\boxplus \frac{1} {n}}$, see \cite[\S  2.1]{campbell2025freeinfinitedivisibilityfractional}.  By \eqref{eq:Cauchy R relationship}, one critical point is given by \begin{equation*}
 x_{n}^*(z)=G_{\mu_{f,n}}\left(\frac{z}{n}\right),
 \end{equation*}
 which is separated from all the critical points close to $\R$.
 To justify that this saddle point is dominating the others on some carefully chosen contour $\Gamma$, and that this fact continues to hold for $f\in\LP_M$, is one of the technical steps of the proof. Here, we will make use of Assumption \ref{assump:main assumption}, the unique definition \eqref{eq:Cauchy R relationship} of the $R$-transform in the relevant domain, and some harmonic analysis involving the limiting saddle point equation as well as topological arguments on its sublevel sets, see Figure \ref{fig:heatmap} below. 
 
  If we assume that the saddle point argument holds, then \begin{equation*}
 A_{n+d,f}(z)\approx\frac{(n+d)!}{2\pi\ii}\int_{x\approx x_{n}^*(z)}  f(x)e^{xz}\frac{\dd x}{x^{n+d+1}}. 
 \end{equation*}
  Then, applying this with $d=-1,0$ we arrive at the following heuristic approximation of the Cauchy transform $G_n$ of $\llbracket A_{n,f}\rrbracket$\begin{equation*}
 G_{n}(z)=\frac{1}{n}\frac{A_{n,f}'(z)}{A_{n,f}(z)}=\frac{A_{n-1,f}(z)}{A_{n,f}(z)}\approx \frac{\frac{(n-1)!}{2\pi\ii}\int_{x\approx x_{n}^*(z)}  f(x)e^{xz}\frac{\dd x}{x^{n}}}{\frac{n!}{2\pi\ii}\int_{x\approx x_{n}^*(z)}  f(x)e^{xz}\frac{\dd x}{x^{n+1}}} \approx n x_{n}^*(z)=nG_{\mu_{f,n}}\left(\frac{z}{n}\right). 
 \end{equation*} From here one would want to normalize the polynomials in $n$ such that the right-hand side converges to the Cauchy transform of some probability measure. This argument is made rigorous in Section \ref{sec:proof}. 
  Rescaling the full asymptotic of the saddle point method then allows to obtain the local spacing of Theorem \ref{thm:local spacing}.
 
 The proofs of our other results follow from our general main Theorem \ref{thm:main result, general}, or technical results proven along the way, in combination with identities in free probability.  For instance, the proof of Theorem \ref{thm:Polya schur and multiplicative groups} does not need another saddle point problem, but instead uses exponential profiles of the respective coefficients obtained from the logarithmic potential of $\Sigma$.

\section{Expanded background and related work}\label{sec:Background}  In this section we present more technical background needed in the proofs and how our work relates to the existing literature.

\subsection{Free probability}  We now discuss free probability in more detail, however we still encourage the interested reader to see \cite{Mingo-Speicher2017,Nica-Speicher2006} and other texts for a more thorough introduction to free probability. 

	There exists a bijection between $\boxplus$-ID measures and classical infinitely divisible measures, known as the Bercovici--Pata bijection after \cite{Bercovici-Pata1999}. We omit the details, but the Bercovici--Pata bijection is in fact a larger bijection between probability measures which preserves the domains of attraction to infinitely-divisible distributions. So the theory of classical infinite-divisibility can be immediately translated in many cases to free probability. It is worth pointing out that in \cite{Bercovici-Pata1999,Bercovici-Voiculescu1993,Bercovici-Voiculescu1995,Bercovici-Wang-Zhong2018} and other references we will use, it is more common to work with the \emph{Voiculescu} transform \begin{equation}\label{eq:Voic to R}
  	\phi_{\mu}(z)=R_{\mu}\left(\frac{1}{z}\right),
  \end{equation} defined on the upper-half plane and the $F$-transform \begin{equation}\label{eq:F to G}
  F_{\mu}(z)=\frac{1}{G_{\mu}(z)}.
  \end{equation} For our purposes, it is more convenient to work with the $R$-transform and Cauchy transform, and the results we need  to apply from these other papers can be immediately translated to our setting using \eqref{eq:Voic to R} and \eqref{eq:F to G}. 
  
  Free L\'evy processes were introduced by Biane \cite{Biane1998} in the study of processes $(X_t)_{t>0}$ with free increments. Notably these processes do not necessarily have both stationary increments and stationary Markov transition functions, leading to two different potential definitions of what could naturally be called a free L\'evy process. When we refer to the distribution of a free L\'evy process we always mean stationary increments with distributions of the form $\mu_{t}=\mu^{\boxplus t}$ for some $\boxplus$-ID $\mu$. 
  
  Free L\'evy processes are a helpful way of understanding why the limit in Corollary \ref{cor:fixed f and stable laws} must be free stable. For $f\in\LP$, let $\mu_{f}$ denote the $\boxplus$-ID measure with $R$-transform  $R_{\mu_f}(u)=-\frac{f'(u)}{f(u)}$. As discussed  in Section \ref{sec:outline_of_proof},  
   the measure $\mu_{n}$ with $R$-transform $R_{\mu_n}(u)=R_{\mu_f}(nu)$ is our $\boxplus$-ID approximation of $\llbracket A_{n,f}\rrbracket$. It additionally follows that $\mu_{n}=\mathcal{D}_{n}\mu_{f}^{\boxplus{1}/{n}}$. If $\mu_{n}$ is a good approximation of $\llbracket A_{n,f}\rrbracket$, then the limit in Corollary \ref{cor:fixed f and stable laws} must be the small time limit of a shifted and re-scaled free L\'evy process. The only possible such limits must be stable for both classical and free L\'evy processes \cite{Arizmendi-Hasebe2018,Maller-Mason2008}, and the conditions of Corollary \ref{cor:fixed f and stable laws} are known to be necessary and sufficient for this limit to exist.

\subsection{Finite free probability}\label{sec:finite_free_probability}  Finite free probability emerged out the celebrated work of Marcus, Spielman, and Srivastava \cite{Marcus-Spielman-Srivastava2015-1,Marcus-Spielman-Srivastava2015-2,Marcus-Spielman-Srivastava2022} on interlacing polynomials. It has become a successful tool for studying how differential operators affect polynomial roots. We encourage the interested reader to see \cite{Arizmendi-Perales2018,Marcus-Spielman-Srivastava2022, Jalowy-Kabluchko-Marynych2025} for more on finite free probability. 

The key tools for extending the actions of differential operators from monomials to more general polynomials are the finite free additive $\boxplus_{n}$ and multiplicative $\boxtimes_{n}$ convolutions on the space of polynomials. These have many equivalent definitions, see \cite[\S 1.1]{Marcus-Spielman-Srivastava2022}, even though the original definition already appeared in \cite{walsh} and \cite{szegoe_bemerkungen_grace}.  \begin{definition}\label{def:finite boxplus}
	Let $p_n$ and $q_n$ be degree $n$ monic polynomials with corresponding polynomials  $P_n$ and $Q_{n}$ of degree $n$ such that $p_n(z)=P_n(\partial_{z})z^{n}$ and $q_{n}(z)=Q_n(\partial_{z})z^n$, respectively.  
	Then, the finite free additive convolution  of $p_n$ and $q_n$ is the unique degree $n$ polynomial $p_n\boxplus_{n}q_n$ such that \begin{equation}\label{eq:def:finite boxplus}
		p_n\boxplus_{n}q_{n}(z)=Q_n(\partial_{z})P_n(\partial_{z})z^n=Q_{n}(\partial_{z})p_n(z)=P_{n}(\partial_z)q_n(z).
	\end{equation}
	Equivalently, if $p_n(z)=\sum_{k=0}^{n}(-1)^{k}e_{k}(p_n)\binom{n}{k}z^{n-k}$ and $q_n(z)=\sum_{k=0}^{n}(-1)^{k}e_{k}(q_n)\binom{n}{k}z^{n-k}$, then \begin{equation*}
		p_n\boxplus_{n}q_n(z)=\sum_{k=0}^{n}z^{n-k}(-1)^{k}\binom{n}{k}\Bigg(\sum_{j=0}^k\binom k j  e_{j}(p_n)e_{k-j}(q_n)\Bigg) . 
	\end{equation*}
\end{definition}

 For our work, \eqref{eq:def:finite boxplus} will turn out to be most convenient. The polynomials $P_n,Q_n$ are unique if their degree $n$ is fixed, but the same holds for any other formal power series that is equivalent modulo $z^n$.
If $p_n,q_n$ are characteristic polynomials of Hermitian $n\times n$ matrices $A,B$, one can also represent $p_n\boxplus q_n$ as the characteristic polynomial of $A+UBU^*$ for Haar-unitary $U$. This explains the name of the \emph{finite} analogue of the free convolution.

\begin{definition}\label{def:finite boxtimes}
		Let $p_n$ and $q_n$ be degree $n$ monic polynomials with corresponding unique polynomials  $P_n$ and $Q_{n}$ such that $p_n(z)=P_n(z\partial_{z})(z-1)^{n}$ and $q_{n}(z)=Q_n(\partial_{z})(z-1)^n$, respectively.
	Then, the finite free multiplicative convolution  of $p_n$ and $q_n$ is the unique degree $n$ polynomial $p_n\boxtimes_{n}q_n$ such that \begin{equation*}
		p_n\boxtimes_{n}q_{n}(z)=Q_n(z\partial_{z})P_n(z\partial_{z})(z-1)^n.
	\end{equation*} Equivalently, if $p_n(z)=\sum_{k=0}^{n}(-1)^{k}e_{k}(p_n)\binom{n}{k}z^{n-k}$ and $q_n(z)=\sum_{k=0}^{n}(-1)^{k}e_{k}(q_n)\binom{n}{k}z^{n-k}$, then \begin{equation*}
	p_n\boxtimes_{n}q_n(z)=\sum_{k=0}^{n}(-1)^{k}e_{k}(p_n)e_{k}(q_n)\binom{n}{k}z^{n-k}. 
	\end{equation*}
\end{definition} 

These convolutions have many interesting properties, including the fact that $p_n\mapsto p_n\boxplus_n q_n$ preserves real roots for any real-rooted $q_n$, and $p_n\mapsto p_n\boxtimes_n q_n$ preserves real roots for any non-negatively-rooted $q_n$, see \cite{Marcus-Spielman-Srivastava2022}. We will use that these induce convolutions on empirical root measures, which ``converge'' to the free convolutions of \S \ref{subsec:free prob} in the large degree limit, see \cite{Marcus2021,Arizmendi-Perales2018,Arizmendi-GarzaVargas-Perales2023,Arizmendi-Fujie-Perales-Ueda2024,Jalowy-Kabluchko-Marynych2025,Fujie2026} for various settings and proofs.

\begin{lemma}[Theorem 1.3 in \cite{Fujie2026}]\label{lem: convolution convergence}
	Let $\{p_{n}\}_{n\geq 1}$ and $\{q_n\}_{n\geq 1}$ be sequences of real rooted polynomials indexed by their degrees such that $\llbracket p_n\rrbracket\Rightarrow\mu$ and $\llbracket q_n\rrbracket\Rightarrow \nu$ for some probability measures $\mu$ and $\nu$. Then, \begin{equation}
		\llbracket p_n\boxplus_{n} q_{n}\rrbracket\Rightarrow\mu\boxplus\nu.
	\end{equation}
\end{lemma}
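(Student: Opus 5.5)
The plan is to pass from weak convergence to convergence of Cauchy transforms on a subdomain of $\C_+$ where the subordination (or $R$-transform) description of $\boxplus$ is valid. Since the roots of $p_n$ and $q_n$ may be unbounded, moments are not available; the Cauchy transform is the natural tool. Write $\mu_n=\llbracket p_n\rrbracket$, $\nu_n=\llbracket q_n\rrbracket$ and $\rho_n=\llbracket p_n\boxplus_n q_n\rrbracket$.

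\emph{Step 1: tightness.} First I would show that $\{\rho_n\}$ is tight. Definition \ref{def:finite boxplus} expresses the coefficients through $e_k$. For the second-order quantity one has the exact identity that the variance is additive under $\boxplus_n$. This is insufficient when the limits lack second moments, so I would instead use a truncation. Move the $\eps n$ most extreme roots of $p_n$ and of $q_n$ to a bounded interval, producing $\tilde p_n$ and $\tilde q_n$. The finite free convolution is monotone in each root: if $p\le \tilde p$ root-wise, then the ordered roots of $p\boxplus_n q$ and $\tilde p\boxplus_n q$ compare as well. This follows from real-rootedness preservation and interlacing, as in Marcus--Spielman--Srivastava. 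The monotonicity gives that at most $O(\eps n)$ roots of $\rho_n$ can escape a compact set determined by the truncated problem. For the truncated problem, which has bounded roots, tightness follows from the variance bound.

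\emph{Step 2: compactly supported case.} Here I would reduce to the known statement for uniformly bounded roots. In that setting $\boxplus_n$ converges to $\boxplus$ via finite free cumulants, which converge to free cumulants (Arizmendi--Perales). The truncated convolution therefore converges to $\tilde\mu_\eps\boxplus\tilde\nu_\eps$.

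\emph{Step 3: remove the truncation.} The monotone coupling of Step 1 shows that $\rho_n$ and the truncated convolution differ in Lévy distance by $O(\eps)$. Their ordered root vectors agree except on an $O(\eps n)$ fraction of indices, and because the convolution is monotone and shift-equivariant, that fraction is controlled. Furthermore, $\tilde\mu_\eps\boxplus\tilde\nu_\eps\Rightarrow\mu\boxplus\nu$ as $\eps\to0$ by the weak continuity of $\boxplus$ (Bercovici--Voiculescu). A diagonal argument then gives $\rho_n\Rightarrow\mu\boxplus\nu$.

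\emph{Main obstacle.} The key difficulty is the quantitative comparison in Step 3. One must show that modifying $\eps n$ roots of $p_n$ changes only $O(\eps n)$ roots of $p_n\boxplus_n q_n$ by a non-negligible amount, uniformly in the unbounded remaining roots. I would try to establish this with an interlacing argument. Changing a single root of $p_n$ yields a polynomial interlacing $p_n$, and the operator $Q_n(\partial_z)$ preserves interlacing. Hence each single-root change moves the ordered root sequence by at most one index shift. After $\eps n$ such changes the Kolmogorov distance changes by at most $\eps$, which is exactly what is needed.
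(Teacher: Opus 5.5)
\textbf{Verdict: your argument is correct and self-contained.} The paper gives no proof of this lemma; it imports it from \cite{Fujie2026}, so there is no internal argument to compare with. Your route is truncation to compact supports, and its engine is the estimate in your last paragraph, which holds. Write $p=(x-a)r$ and $\tilde p=(x-b)r$. Then $p\boxplus_n q=A-aB$ and $\tilde p\boxplus_n q=A-bB$, where $A=Q_n(\partial_z)[xr]$ and $B=Q_n(\partial_z)r$ as in Definition~\ref{def:finite boxplus}. For every $t\in\R$, $A-tB=(x-t)r\boxplus_n q$ is real-rooted, and $B$ itself is real-rooted by Hurwitz, letting $t\to\infty$ in $-(A-tB)/t$. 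So Obreschkoff's theorem shows that the roots of $B$ interlace those of both $A-aB$ and $A-bB$. Consequently the empirical distribution functions of $\llbracket p\boxplus_n q\rrbracket$ and $\llbracket \tilde p\boxplus_n q\rrbracket$ differ by at most $1/n$ everywhere. Changing $k$ roots of $p_n$ or $q_n$ therefore moves $\llbracket p_n\boxplus_n q_n\rrbracket$ by at most $k/n$ in Kolmogorov distance. Combine this with the bounded case \cite{Arizmendi-Perales2018} and the weak continuity of $\boxplus$ \cite{Bercovici-Voiculescu1993}. You then get $\limsup_n d_L(\llbracket p_n\boxplus_n q_n\rrbracket,\mu\boxplus\nu)=O(\eps)$ directly, so your separate tightness step and the diagonal argument are unnecessary. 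Compared with the citation, your proof reduces everything to compact supports through a quantitative estimate that uses only two properties of $p\mapsto p\boxplus_n q$: linearity and preservation of real roots.

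\textbf{Statements to correct (none is fatal).}

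First, the Cauchy-transform plan announced at the start is never used.

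Second, in Step 1, monotonicity alone cannot confine all but $O(\eps n)$ roots of $\rho_n$. Any polynomial root-wise above $p_n$ must keep its large roots, and any polynomial below it must keep its very negative ones. So no bounded-rooted sandwich exists, and the tail bound has to come from the interlacing estimate.

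Third, in Step 3, the ordered root vectors of $\rho_n$ and of the truncated convolution do not agree off $O(\eps n)$ indices. Changing one root of $p_n$ generally moves every root of $p_n\boxplus_n q_n$. Only the input root vectors agree off $O(\eps n)$ indices, and the Kolmogorov bound is what transfers this to the outputs.

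Finally, make the truncation a fixed continuous clipping $\phi_L(x)=\max(-L,\min(x,L))$, with $L$ chosen so that $\mu(\{|x|\ge L\})<\eps$ and $\nu(\{|x|\ge L\})<\eps$. Then:
\begin{itemize}
\item $\llbracket \tilde p_n\rrbracket=(\phi_L)_*\llbracket p_n\rrbracket\Rightarrow(\phi_L)_*\mu$;
\item by the portmanteau theorem, fewer than $\eps n$ roots of each polynomial are moved for large $n$;
\item $d_K((\phi_L)_*\mu,\mu)\le\eps$.
\end{itemize}
This guarantees that every limit you invoke actually exists.
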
   The multiplicative analogue of Lemma \ref{lem: convolution convergence} is available from \cite{Arizmendi-Fujie-Perales-Ueda2024}.  \begin{lemma}[Proposition 10.1 in \cite{Arizmendi-Fujie-Perales-Ueda2024}]\label{lem: mult convolution convergence}
Let $\{p_{n}\}_{n\geq 1}$ and $\{q_n\}_{n\geq 1}$ be sequences of positively rooted polynomials indexed by their degrees such that $\llbracket p_n\rrbracket\Rightarrow\mu$ and $\llbracket q_n\rrbracket\Rightarrow \nu$ for some probability measures $\mu$ and $\nu$. Then, \begin{equation}
	\llbracket p_n\boxtimes_{n} q_{n}\rrbracket\Rightarrow\mu\boxtimes\nu.
\end{equation}
\end{lemma}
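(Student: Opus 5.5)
The plan is to reduce to compactly supported limits, where moment methods apply, and then remove the truncation using a perturbation bound for $\boxtimes_n$ that is uniform in $n$.

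\emph{Step 1 (bounded case).} Suppose first that all roots of $p_n$ and $q_n$ lie in a fixed interval $[0,K]$. Then $\llbracket p_n\rrbracket$ and $\llbracket q_n\rrbracket$ converge in moments, not just weakly. By Definition~\ref{def:finite boxtimes}, the normalized elementary symmetric coefficients multiply:
\[
e_k(p_n\boxtimes_n q_n)=e_k(p_n)\,e_k(q_n).
\]
I would pass from coefficients to moments using the finite free cumulants of Arizmendi--Perales. For fixed $j$, the $j$-th moment of $\llbracket p_n\boxtimes_n q_n\rrbracket$ is a polynomial in the first $j$ finite free cumulants of $p_n$ and $q_n$, with coefficients depending on $n$. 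As $n\to\infty$ these formulas converge to the free moment--cumulant formula for multiplicative convolution, which is sum over non-crossing partitions with Kreweras complement. This gives convergence of all moments to those of $\mu\boxtimes\nu$. Real-rootedness of $p_n\boxtimes_n q_n$, with roots in $[0,K^2]$, gives tightness and uniqueness of the moment problem, hence weak convergence.

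\emph{Step 2 (truncation).} In general, fix a large $K$ and replace every root $r>K$ of $p_n$ and of $q_n$ by $K$. This gives polynomials $p_n^K,q_n^K$ with $\llbracket p_n^K\rrbracket\Rightarrow\mu^K$ and $\llbracket q_n^K\rrbracket\Rightarrow\nu^K$, where $\mu^K,\nu^K$ are the pushforwards of $\mu,\nu$ under $x\mapsto\min(x,K)$. Step~1 gives $\llbracket p_n^K\boxtimes_n q_n^K\rrbracket\Rightarrow\mu^K\boxtimes\nu^K$. Moreover $\mu^K\boxtimes\nu^K\Rightarrow\mu\boxtimes\nu$ as $K\to\infty$, since $\boxtimes$ is weakly continuous on $[0,\infty)$ (Bercovici--Voiculescu). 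It therefore remains to show that the Kolmogorov (or Lévy) distance between $\llbracket p_n\boxtimes_n q_n\rrbracket$ and $\llbracket p_n^K\boxtimes_n q_n^K\rrbracket$ is at most $C(\mu((K,\infty))+\nu((K,\infty)))+o(1)$, uniformly in $n$.

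\emph{Step 3 (rank perturbation; the main obstacle).} The key estimate I need is the following: if $p,p'$ are positively rooted of degree $n$ and differ in only $k$ roots, then for any positively rooted $q$ the counting functions of the roots of $p\boxtimes_n q$ and $p'\boxtimes_n q$ differ by at most $Ck$ everywhere. I would first try to prove this by changing one root at a time. Moving a single root $a$ to $a'$ changes $p$ within a pencil
\[
(z-a)h(z)\ \longrightarrow\ (z-a')h(z),
\]
and $p\boxtimes_n q$ is linear in $p$. Hence $(z-t)h\boxtimes_n q$, $t\ge 0$, is a real-rooted pencil by the root-preservation property recalled after Definition~\ref{def:finite boxtimes}. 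By Obreschkoff/Hermite--Kakeya, the members of such a pencil interlace, so the counting function moves by at most one per changed root. This is the analogue of Weyl interlacing for $A+UBU^*$. An alternative route is the monotonicity of roots of $p\boxtimes_n q$ in the roots of $p$, which would also bound the effect of truncation.

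Summing over the changed roots, the number of roots changed by truncation is $n\,\llbracket p_n\rrbracket((K,\infty))\to n\,\mu((K,\infty))$ (taking $K$ a continuity point), and similarly for $q_n$. Combining Steps~1--3 and letting $K\to\infty$ then completes the argument.
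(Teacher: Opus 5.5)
Your argument is correct, but it takes a genuinely different route from the one behind this lemma. The paper gives no proof and imports the result from Arizmendi--Fujie--Perales--Ueda, where it is deduced from their finite $S$-transform.

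\textbf{The cited route.} One works with ratios of consecutive normalized coefficients $e_{k-1}(p)/e_k(p)$. These are exactly multiplicative under $\boxtimes_n$, since $e_k(p\boxtimes_n q)=e_k(p)e_k(q)$. The other ingredient is a characterization theorem: for nonnegatively rooted polynomials, $\llbracket p_n\rrbracket\Rightarrow\mu$ holds precisely when these ratios at $k\approx tn$ converge to $S_\mu$. This is the same exponential-profile mechanism the paper itself uses through Lemma \ref{lem:exp prof of conv power} in proving Theorem \ref{thm:Polya schur and multiplicative groups}. That approach treats unbounded supports directly, with no truncation and no moments, and it comes with a converse and coefficient asymptotics. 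The price is a nontrivial characterization theorem and some care with atoms at $0$, since $S_\mu$ lives only on $(-1+\mu(\{0\}),0)$.

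\textbf{Your route.} You need only standard inputs: the Arizmendi--Perales cumulant formula for $\boxtimes_n$ in the bounded case, weak continuity of $\boxtimes$ on $[0,\infty)$, real-root preservation, and Hermite--Kakeya--Obreschkoff. Your interlacing step also gives an explicit, $n$-uniform bound: the Kolmogorov distance between $\llbracket p_n\boxtimes_n q_n\rrbracket$ and $\llbracket p_n^K\boxtimes_n q_n^K\rrbracket$ is at most $\llbracket p_n\rrbracket((K,\infty))+\llbracket q_n\rrbracket((K,\infty))$. This argument transfers verbatim to other bilinear real-root-preserving convolutions such as $\boxplus_n$.

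\textbf{Two small remarks.}
\begin{itemize}
\item To apply Obreschkoff you want $((z-t)h)\boxtimes_n q$ real-rooted for \emph{all} $t\in\R$ (and $h\boxtimes_n q$ itself), not only for $t\geq 0$. This does hold, because $\boxtimes_n$ against a nonnegatively rooted $q$ preserves real-rootedness of any real-rooted input. Alternatively, if you only use $t\geq 0$, the pencil members are monic of degree $n$, and Dedieu's common-interlacer theorem still bounds the change in the counting function by $2$ per moved root. That is enough for your purposes.
\item The monotonicity alternative you mention would by itself only give one-sided stochastic domination, not the two-sided closeness you need. Keep the interlacing argument.
\end{itemize}
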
 
Another version of this result is due to Fujie \cite{Fujie2026}, where one can relax the assumptions on $q_n$ at the cost of added assumptions on $p_n$.  \begin{lemma}[Theorem 4.5 in \cite{Fujie2026}]\label{lem: mult convolution convergence Fujie}
Let $\{p_{n}\}_{n\geq 1}$ and $\{q_n\}_{n\geq 1}$ be sequences of polynomial indexed by their degrees such $p_n$ has non-negative uniformly bounded roots, $q_n$ has real roots, and $\llbracket p_n\rrbracket\Rightarrow\mu$, and $\llbracket q_n\rrbracket\Rightarrow \nu$ for some probability measures $\mu$ and $\nu$. Then, \begin{equation}
	\llbracket p_n\boxtimes_{n} q_{n}\rrbracket\Rightarrow\mu\boxtimes\nu.
\end{equation}
\end{lemma}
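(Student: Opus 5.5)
The plan is to first handle the case where the roots of $q_n$ are also uniformly bounded, and then remove that restriction by truncation, using an interlacing estimate that is uniform in $n$.

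\emph{Bounded case.} Assume all roots of $p_n$ lie in $[0,K]$ and all roots of $q_n$ lie in $[-K,K]$. Then $\mu$ and $\nu$ are compactly supported, and weak convergence is equivalent to convergence of all moments. By Definition \ref{def:finite boxtimes}, $e_k(p_n\boxtimes_n q_n)=e_k(p_n)e_k(q_n)$. I would pass from normalized coefficients to moments through the finite free cumulants of Arizmendi--Perales. There the finite free multiplicative convolution is expressed by a moment--cumulant formula over non-crossing partitions with $1/n$-corrections, and each fixed moment is a polynomial in finitely many moments of $p_n$ and $q_n$ with coefficients converging as $n\to\infty$. This formula is purely algebraic, so it needs no sign condition on the roots of $q_n$. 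The $k$-th moment of $\llbracket p_n\boxtimes_n q_n\rrbracket$ therefore converges to the $k$-th moment of the free multiplicative convolution of distributions given by the free moment--cumulant formula. When one factor lives on $[0,\infty)$ this is $\mu\boxtimes\nu$. The roots of $p_n\boxtimes_n q_n$ are real and bounded by $K^2$, so the moment method concludes this case.

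\emph{Interlacing estimate.} This is the key step. Write $q(z)=(z-t)r(z)$ with $r$ real-rooted. Then $t\mapsto p_n\boxtimes_n q$ is affine in $t$, and it is real-rooted for every real $t$, because $\boxtimes_n$ with a non-negatively rooted $p_n$ preserves real-rootedness. Two real-rooted polynomials whose every real combination (along the segment) stays real-rooted must interlace, by Hermite--Kakeya--Obreschkoff. Hence moving one root of $q_n$ changes the counting function of the roots of $p_n\boxtimes_n q_n$ by at most $1$ everywhere. Iterating this, if $q_n$ and $\tilde q_n$ differ in at most $m$ roots, then the Kolmogorov distance between $\llbracket p_n\boxtimes_n q_n\rrbracket$ and $\llbracket p_n\boxtimes_n\tilde q_n\rrbracket$ is at most $m/n$. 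The main obstacle is to check the details here: real-rootedness along the whole segment, and degenerate cases where degrees drop or leading coefficients vanish.

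\emph{Truncation.} Given $\varepsilon>0$, tightness of $\llbracket q_n\rrbracket$ gives $K$ such that at most $\varepsilon n$ roots of $q_n$ lie outside $[-K,K]$ for all large $n$. Let $q_n^K$ be $q_n$ with these roots clamped to $\pm K$. We choose $K$ to be continuity points of $\nu$, so that $\llbracket q_n^K\rrbracket\Rightarrow\nu^K$, the pushforward of $\nu$ under clamping. The bounded case gives $\llbracket p_n\boxtimes_n q_n^K\rrbracket\Rightarrow\mu\boxtimes\nu^K$. The interlacing estimate gives $\limsup$ of the Kolmogorov (hence Lévy) distance between $\llbracket p_n\boxtimes_n q_n\rrbracket$ and $\llbracket p_n\boxtimes_n q_n^K\rrbracket$ at most $\varepsilon$. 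Finally, $\nu^K\Rightarrow\nu$ as $K\to\infty$, and weak continuity of $\nu\mapsto\mu\boxtimes\nu$ for fixed $\mu$ on $[0,\infty)$ (available via the $S$-transform description of \cite{Arizmendi-Hasebe-Kitagawa2026}, or the operator model) yields $\mu\boxtimes\nu^K\Rightarrow\mu\boxtimes\nu$. Letting $\varepsilon\to0$ gives the claim.
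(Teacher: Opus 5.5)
The paper does not prove this lemma. It is quoted as Theorem~4.5 of \cite{Fujie2026} and used only as a black box in the proof of Theorem~\ref{thm:Polya-Schur full version}. Your proposal is therefore an independent, self-contained route, and it is correct.

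\textbf{The details you flagged.} They all resolve.
\begin{itemize}
\item Write $A=p_n\boxtimes_n(zr)$ and $B=p_n\boxtimes_n r$, using the bilinear extension of $\boxtimes_n$. The leading coefficient of $p_n\boxtimes_n q$ is the product of the leading coefficients, so every $A-tB$ has exact degree $n$, while $\deg B\le n-1$.
\item Since $t$ ranges over all of $\R$, every $A-tB$ is real-rooted. Moreover $B$ is real-rooted (or zero) as a Hurwitz limit of $-(A-tB)/t$. So the whole pencil is real-rooted.
\item Hermite--Kakeya--Obreschkoff then says any two members interlace, so their counting functions differ by at most $1$.
\item Your phrase ``along the segment'' would not by itself give interlacing. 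For example, $(z-1)(z-4)$ and $(z-2)(z-3)$ have real-rooted convex combinations but do not interlace. A segment would, however, give a common interlacer by the Dedieu--Fell--Chudnovsky--Seymour criterion, and that yields the same $1/n$ bound.
\end{itemize}

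\textbf{Small corrections.}
\begin{itemize}
\item The multiplicative finite-cumulant formula and the compact-support convergence theorem you need are in \cite{Arizmendi-GarzaVargas-Perales2023}, not in Arizmendi--Perales.
\item $\pm K$ need not be continuity points of $\nu$, because clamping is a continuous map.
\item The $K^2$ root bound is true, via the random matrix model $A^{1/2}UBU^*A^{1/2}$. It is not needed, though, because a compactly supported limit is moment-determinate.
\item For $\mu\boxtimes\nu^K\Rightarrow\mu\boxtimes\nu$, the operator model is cleanest. The difference $a^{1/2}ba^{1/2}-a^{1/2}b_Ka^{1/2}$ has range projection of trace at most $\nu(\{|x|>K\})$. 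This is the exact operator analogue of your finite estimate and gives Kolmogorov distance at most that trace.
\end{itemize}

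\textbf{What each route buys.} The citation costs one line. Your argument reduces everything to the compactly supported moment method plus an elementary, $n$-uniform stability bound: $d_{\mathrm{Kol}}\le m/n$ when $m$ roots of $q_n$ are changed. It also makes transparent where each hypothesis enters.

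Your scheme may even give more than the stated lemma. Since a segment suffices, clamping the large roots of $p_n$ down to $K\ge 0$ keeps every intermediate polynomial non-negatively rooted. The same argument should then remove the uniform bound on the roots of $p_n$. The paper calls exactly this assumption merely technical after Theorem~\ref{thm:Polya-Schur full version}. This is worth writing out carefully.
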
 Marcus \cite{Marcus2021} also defined the \emph{finite $R$-transform} of $p_n$ as the formal power series \begin{equation}
R_{p_n}(u)=-\frac{P_n'(nu)}{P_{n}(nu)}\mod u^{n}.
\end{equation}
By Definition \ref{def:finite boxplus}, the finite free convolution $	p_n\boxplus_{n}q_{n}$ corresponds to a multiplication $P_nQ_n$, and hence $R_{	p_n\boxplus_{n}q_{n }}=R_{p_n}+R_{q_n}$ by the product rule, as one would expect from a finite version of the $R$-transform. When $p_n(z)=f(\partial_z)z^{n}=A_{n,f}(z)$ for $f\in\LP$, one could remove the truncation $\mod u^{n}$ and consider $R_{p_n}$ as an analytic function on $\C_-$. This is where we get the approximation $\mu_{n}$ of $\llbracket A_{n,f}\rrbracket$.

The operator $\mathscr{L}_{\beta}$ defined in \eqref{eq:M def} can also be used to define a finite convolution, referred to as the finite free rectangular convolution $\boxplus_{n}^{\beta}$. The operation $\boxplus_{n}^{\beta}$ was first defined by Gribinski \cite{Gribinski2024} using explicit formulas for the coefficients. It was pointed out explicitly in \cite{campbell2025freeinfinitedivisibilityfractional} that this is equivalent to Definition \ref{def:finite boxplus}, with $\mathscr{L}_{\beta}$ replacing $\partial_{z}$, though this fact is implicit in one of the proofs of Cuenca \cite{Cuenca24}. 
The ``rectangular'' in the name of $\mathscr{L}_{\beta}$ and $\boxplus_{n}^{\beta}$ stems from characterizations in terms of random rectangular matrices when $\beta\in\Z$, see \cite{Gribinski2024} and \cite[Lemma 1.10]{campbell2025freeinfinitedivisibilityfractional}. It appears open whether $\boxplus_{n}^{\beta}$ preserves positive roots for non-integer $\beta$. The analogous result to Lemmas \ref{lem: convolution convergence}, \ref{lem: mult convolution convergence}, and \ref{lem: mult convolution convergence Fujie} remains open as well, except for the case when the roots are uniformly bounded, which is due to \cite{Cuenca24}. If these problems were resolved, it would be immediate to extend Theorem \ref{thm:rectangular Appell polynomials} to more general $f(\mathscr{L}_{\beta})p_n$. 

\subsection{Differential operators on polynomials} Finite free probability has lead to significant progress on understanding the effects of differential operators acting on polynomial roots and how they connect to free probability. Two operators which have received substantial attention in free probability are $\partial_{z}^{tn}$ and $e^{-\frac{t}{2n} \partial_{z}^2 }$, where $n$ is the degree of the polynomial on which they act.  Let us discuss the previous results in more detail.


 For \emph{repeated differentiation}, the corresponding Appell polynomial is trivially a rescaled monomial again, whose limiting zero distribution is $\delta_0$. Historically, the most fundamental result is the Gauss--Lucas theorem~\cite[Theorem 6.1]{Marden}, locating the roots of the derivative inside the convex hull of the roots of the original polynomial. In particular, a finite number of differentiations $\partial_z^kp_n$ does not change the limiting root distribution $\llbracket p_n\rrbracket\Rightarrow \mu$, see \cite{Byun-Lee-Reddy2022,totik,Michelen-Vu2024,Angst-Malicet-Poly2024,angst2026convergence,michelen2024almost}. Interestingly, if the order of differentiation $\partial_z^{tn}$ is the same as the degree of the polynomial, then a new limiting root distribution emerges, which first have been described via non-local PDE's in
\cite{Steinerberger2019}. It was then connected to free projections and free convolution powers in \cite{Shlaykhtenko-Tao2020,Steinerberger2020,martinez2024flow,kiselev2022flow}. Finally, the weak convergence $\llbracket \partial_z^{tn}p_n\rrbracket\Rightarrow \mathcal D_{1-t}\mu^{\frac{1}{1-t}}$ was proven in \cite{Hoskins-Kabluchko2021}, see also Corollary \ref{cor:f with a root at 0} and \cite{Arizmendi-GarzaVargas-Perales2023}. By now, generalizations include $(z^a\partial_z^b)^{tn}(z-1)^n$ with some $\boxtimes$-ID limiting root distribution, see \cite{Jalowy-Kabluchko-Marynych2025}, and the polar differentiation with limiting distribution described via the M\"obius transform. Moreover, local descriptions of roots under repeated differentiation are available from an approximation $\partial_z^{n-d} p_n(\tfrac{z}{\sqrt n})\approx \mathrm{He}_d(z)$ for fixed $d$ and as $n\to\infty$, see \cite{Hoskins-Steinerberger2022, Campbell-ORourke-Renfrew2024even,Arizmendi-Campbell-Fujie2025,Gorin-Klepttsyn2020universal}. Notably, the framework has been generalized to some complex rooted polynomials $\partial_z^{tn} p_n$, see \cite{Campbell-ORourke-Renfrew2024,BHS24,Hall-Ho-Jalowy-Kabluchko2023Repeat,GNV25}, but still the full universality of complex rooted polynomials remains open.

For the (holomorphic, backwards) \emph{heat flow}, we saw in Corollary \ref{cor:fixed f and stable laws} that the corresponding Appell polynomials are rescaled probabilist's Hermite polynomials $
\exp(-\frac{\sigma^2}{2n}\partial_z^2 )z^{n} = \He_n(\sqrt n z/\sigma)$ whose limiting root distribution is the semicircle law $\mathsf{sc_{\sigma}}$ of variance $\sigma^2$. More generally, it has been shown in \cite{Kabluchko2022leeyang,VW22,Jalowy-Kabluchko-Marynych2025part2} that if $p_n$ is real rooted with $\llbracket p_n\rrbracket\Rightarrow \mu$, then $\llbracket\exp(-\frac{\sigma^2}{2n}\partial_z^2 )p_n\rrbracket\Rightarrow \mu\boxplus\mathsf{sc}_\sigma$ is the free heat semigroup. Also here, specific settings of complex rooted polynomials have been studied in \cite{Hall-Ho-Jalowy-Kabluchko2023,Hall-Ho-Jalowy-Kabluchko2023heat,Hall-Ho2022heat,Hofert}. Related exponentiated differential operators of the form $\exp\big(-(\frac{z\partial_z}{n})^k\partial_z\big)z^n$ have been studied in \cite{Jalowy-Kabluchko-Marynych2025part2} whose limiting root distributions is some specific $\boxtimes$-ID distribution, as well as $f\big(\frac{\partial_z}{n}\big)^nz^n=(\mathcal D_{1/n} A_{n,f})^{\boxplus_n n}(z)$ in \cite{campbell2025freeinfinitedivisibilityfractional} whose limiting root distribution is some specific $\boxplus$-ID distribution.

 Therefore, $\partial_{z}^{tn}p(z)$ and $e^{-\frac{t}{2n} \partial_{z}^2 }p(z)$ provide finite free analogues of \emph{free projection} and \emph{free Brownian motion}, respectively. Since both $z^{\lfloor tn\rfloor}$ and $e^{-\frac{t}{2n}{z}^2 }$ are in $\LP$, Corollary \ref{cor:f with a root at 0} unites these previous result and generalizes them to the largest possible class of power series in $\partial_{z}$ which preserve real roots. 
Many of the previous results on roots of real-rooted polynomials under differential flows primarily rely on combinatorial tools from finite free probability involving moments of the roots, hence they are intrinsically restricted to (1) real rooted polynomials, (2) usually with bounded support and (3) global results on the limiting root distribution.

\begin{remark}
Let us comment on a PDE-perspective of the differentiation flow $\llbracket f_{\lfloor tn\rfloor}\left(t\partial_{z}\right)p_n(z)\rrbracket\Rightarrow \mu\boxplus\mu_{c,\Sigma}^{\boxplus t}$ of Corollary \ref{cor:Free convolution theorem}. For the heat flow of time $t>0$ and at any initial distribution $\mu$ on $\R$, it goes back to Voiculescu \cite[\S4.7]{voiculescu1990noncommutative} that the Cauchy transform $G_t$ of the distribution $\mu\boxplus\mathsf{sc}_t$ satisfies the Burger's equation 
\begin{align}\label{eq:Burgers} \partial_t G_t(z)=-  G_t(z)\partial_zG_t(z)\text{ on } z\in\C_+\text{ with  } G_0(z)=G_\mu(z),
\end{align}
 see also \cite{Biane,Voiculescu-Dykema-Nica1992}. More generally, for a free convolution flow $\mu\boxplus\mu_{c,\Sigma}^{\boxplus t}$, its $R$-transform satisfies $R_t=R_\mu+tR_{\mu_{c,\Sigma}}$ by Definition \ref{def:free conv}. Hence, the inverse Cauchy transform is given by $G_t^{-1}=G_\mu^{-1}+tR_{\mu_{c,\Sigma}}$. Differentiating yields $0=\partial_t\big( G_t(G_\mu^{-1}(z)+tR_{\mu_{c,\Sigma}}(z))\big)$, which yields
 \begin{align}
\partial_t G_t(z)=-  R_{\mu_{c,\Sigma}}(G_t(z))\partial_zG_t(z)\text{ on } z\in\C_+\text{ with  } G_0(z)=G_\mu(z),
 \end{align}
 generalizing the Burger's equation \eqref{eq:Burgers}. 
\end{remark}

\subsection{The P\'olya--Schur program}\label{sec:polyaschur}  The P\'olya--Schur program \cite{Borcea-Branden2009Ann,Borcea-Branden2009Inv,Craven-Csordas1994,Benz1935,Polya1913,Schur--Polya1914,Laguerre1883} is a series of results aiming to solve the following problem. \begin{problem}\label{prob:PS prob}
	For $\Omega\subset\C$, let $\mathcal{P}(\C,\Omega)\subset\mathcal{P}(\C)$ denote all univariate polynomials with all their zeros in $\Omega$. Classify the set of all linear operators $T$ on $\mathcal{P}(\C)$ such that $T(\mathcal{P}(\C,\Omega))\subseteq \mathcal{P}(\C,\Omega)$. 
\end{problem} A natural and important version of this problem is the case of operators preserving real roots. Special cases date back to the works of Laguerre, P\'olya, Schur, and Benz  \cite{Benz1935,Polya1913,Schur--Polya1914,Laguerre1883} among others. The full classification was given by Borcea and Br\"and\'en \cite{Borcea-Branden2009Ann}, nearly a century after these classical results.

One can go beyond asking not only \emph{where} the zeros are distributed, but also \emph{how} they are distributed in the large degree limit. 
\begin{question}\label{ques:how are roots distributed}
	For a sequence of polynomials $\{p_n\}\subset\mathcal{P}(\C,\Omega)$ of growing degree and an operator $T$ preserving roots in $\Omega$, how are the roots of $Tp_n$ distributed as $n\rightarrow\infty$?
\end{question}   Our results answer this question for the operators classified by the classical works of P\'olya, Schur, and Benz.  
Laguerre provided a earlier family of operators which preserve real roots, which can be seen as a special case of the P\'olya--Schur theorem.
 \begin{lemma}[Laguerre's Theorem]\label{lem:Lag theorem}
	Let $f$ be a function in the Laguerre--P\'olya such that $f$ has only negative roots. If all the roots of $p$ are real, then all the roots of $f(z\partial_{z})p(z)$ are real. 
\end{lemma}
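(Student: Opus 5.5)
The plan is to reduce to first-order factors and then pass to the limit. The key observation is that $z\partial_z$ is diagonal in the monomial basis, $f(z\partial_z)z^k=f(k)z^k$. Hence for $p$ of degree $n$ we have $f(z\partial_z)p=\sum_{k=0}^n f(k)p_kz^k$, which depends only on the values $f(0),\dots,f(n)$. Writing $f(z)=Ce^{-cz-\frac{\sigma^2}{2}z^2}\prod_j(1+z/a_j)e^{-z/a_j}$ with $a_j>0$, it therefore suffices to find polynomials $g_m$ with $g_m(k)\to f(k)$ for $0\le k\le n$ such that each $g_m(z\partial_z)$ maps real-rooted polynomials of degree $n$ to real-rooted polynomials of degree $n$. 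Then $g_m(z\partial_z)p\to f(z\partial_z)p$ coefficientwise at fixed degree $n$, and $f(n)\neq0$ because the roots of $f$ are negative. By Hurwitz's theorem (continuity of roots at fixed degree) the limit is real-rooted.

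\emph{Step 1 (negative roots).} For $a>0$ we have $(z\partial_z+a)p=zp'+ap=z^{1-a}(z^ap)'$. Let $p$ have a root of multiplicity $m$ at $0$ and distinct positive roots $0<r_1<\dots<r_k$. Apply Rolle to $|x|^ap(x)$, which vanishes at $0$ and at every $r_i$. This gives a root of $(|x|^ap)'$ in each interval $(0,r_1),(r_1,r_2),\dots,(r_{k-1},r_k)$, and the negative roots are handled symmetrically. Each multiple root of $p$ of multiplicity $s$ remains a root of $zp'+ap$ of multiplicity $s-1$, and $0$ remains a root of multiplicity $m$. Counting with multiplicity yields $n$ real roots. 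Since the leading coefficient of $zp'+ap$ is $(n+a)p_n\neq0$, the polynomial has degree exactly $n$, so all of its roots are real.

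\emph{Step 2 (large positive roots).} For $b>n$ we have $(z\partial_z-b)p=z^{1+b}(z^{-b}p)'$. On $(0,\infty)$, Rolle gives a root between consecutive positive roots of $p$. Moreover $z^{-b}p(z)\to0$ as $z\to+\infty$ because $b>n$, so there is one additional root beyond $r_k$ (or anywhere in $(0,\infty)$ if $k=0$, using the blow-up of $z^{-b}p$ at $0^+$ when $p(0)\neq0$). The negative side is symmetric, and multiple roots are treated as in Step 1. The count again gives $n$ real roots, and the leading coefficient $(n-b)p_n$ is nonzero.

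\emph{Step 3 (approximation).} Choose
\[
g_m(z)=C\Bigl(1-\tfrac{c}{m}z\Bigr)^{\pm m}\text{-type factors}\cdot\Bigl(1-\tfrac{\sigma^2z^2}{2m}\Bigr)^m\prod_{j\le m}\Bigl(1+\tfrac{z}{a_j}\Bigr)\Bigl(1-\tfrac{z}{ma_j}\Bigr)^{m}.
\]
More precisely, each exponential $e^{\lambda z}$ is approximated by $(1+\lambda z/m)^m$, whose single root $-m/\lambda$ is either negative or a positive number exceeding $n$ once $m$ is large. Likewise the Gaussian factor has roots $\pm\sqrt{2m}/\sigma$, the positive one exceeding $n$ for large $m$. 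Thus every root of $g_m$ is either negative or lies in $(n,\infty)$, and $g_m\to f$ uniformly on $[0,n]$. By Steps 1 and 2, $g_m(z\partial_z)$ is a product of factors each preserving real-rootedness at degree $n$, and the limiting argument of the first paragraph concludes the proof. (The exponential factor could alternatively be handled directly, since $e^{-cz\partial_z}p(z)=p(e^{-c}z)$.)

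The main obstacle is the Gaussian factor $e^{-\sigma^2z^2/2}$. It cannot be approximated by polynomials with only negative roots, which is why Step 2 is needed: the factors $(z\partial_z-b)$ with $b>n$ must be shown harmless at fixed degree $n$. The delicate part there is the root count, in particular the extra root past the largest root of $p$ and the case where $p$ has no positive roots.
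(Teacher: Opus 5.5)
Your proof is correct, and it necessarily takes a different route from the paper, because the paper gives no proof. Lemma~\ref{lem:Lag theorem} is quoted there as a classical result, with the remark that it is subsumed by the P\'olya--Schur classification (Proposition~\ref{prop:Polya-Schur}). Deducing it from that classification would require showing that $\sum_k f(k)z^k/k!$ lies in $\mathcal{LPI}$ (up to $z\mapsto -z$), which is no easier than the lemma itself. Your argument is essentially Laguerre's finite-degree theorem: if $h$ is a real polynomial with all zeros in $(-\infty,0)\cup(n,\infty)$, then $h(z\partial_z)$ preserves real-rootedness and degree on polynomials of degree $n$. You combine this with the observation that $f(z\partial_z)p$ depends only on $f(0),\dots,f(n)$. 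This makes your proof self-contained, and it is more informative than a citation. Step~2 is what absorbs the factors of $f$ that are not limits of polynomials with only negative zeros: the Gaussian, and the exponentials $e^{-sz}$ with $s>0$, including the convergence factors $e^{-z/a_j}$. The claimed uniform convergence $g_m\to f$ on $[0,n]$ deserves one line. It follows from
\[
\Bigl|\sum_{j\le m}\Bigl(m\log\bigl(1-\tfrac{z}{ma_j}\bigr)+\tfrac{z}{a_j}\Bigr)\Bigr|\le \frac{2n^2}{m}\sum_j a_j^{-2}\qquad(|z|\le n),
\]
valid once $m\min_j a_j\ge 2n$. Also, in your display only $(1-\frac{c}{m}z)^{m}$ is meant; a negative exponent would not give a polynomial.

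One claim in Step~2 is false and must be deleted: the parenthetical asserting an extra root ``anywhere in $(0,\infty)$ if $k=0$.'' A function going from $\pm\infty$ at $0^+$ to $0$ at $+\infty$ can be monotone, so no critical point is forced. For example, $p=(z+1)^n$ gives $zp'-bp=(z+1)^{n-1}\bigl((n-b)z-b\bigr)$, whose remaining root $b/(n-b)$ is negative when $b>n$. Such a root is also not needed. When $p$ has no positive roots, the positive half-line must contribute $P=0$ roots, and counting the claimed extra root would give $n+1$ roots for a polynomial of degree $n$. The same remark applies on the negative side when $p$ has no negative roots. With the parenthetical removed, the count in Step~2 is exactly $n$ and the rest of the argument goes through unchanged.
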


Problem \ref{prob:PS prob} remains open in the case when $\Omega=\R_+$, and we were unable to find a version of the following lemma in the literature.
\begin{lemma}[Weak P\'olya--Schur Theorem for $\mathscr{L}_{\beta}$]\label{lem:PS for rect}
	Let $T$ be a linear operator that commutes with $\mathscr{L}_{\beta}$. $T[z^n]$ has only non-negative roots for every positive integer $n$  if and only if  \begin{equation}
		T=f(\mathscr{L}_{\beta}),
	\end{equation} for some function $f\in\mathcal{LPI}$.
\end{lemma}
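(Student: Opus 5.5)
Throughout, $\mathscr{L}_\beta$ acts on monomials as a weighted lowering operator, $\mathscr{L}_\beta z^k=k(k+\beta)z^{k-1}$, so for $\beta>-1$ its weights $k(k+\beta)$ are nonzero for $k\geq1$. The plan is to first reduce any $T$ commuting with $\mathscr{L}_\beta$ to a formal power series in $\mathscr{L}_\beta$, and then prove the two directions separately. Sufficiency will come from Laguerre's Theorem (Lemma \ref{lem:Lag theorem}). Necessity will come from a Jensen-type scaling limit in the spirit of Proposition \ref{prop:Jensen polynomial convergence}.

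\emph{Step 1 (structure).} Since $\mathscr{L}_\beta T1=T\mathscr{L}_\beta 1=0$ and $\ker\mathscr{L}_\beta=\C$, we have $T1=a_0$. Inductively, $\mathscr{L}_\beta(Tz^n)=n(n+\beta)Tz^{n-1}$. This determines $Tz^n$ up to an additive constant. Choosing the constants $a_n$ successively shows $T=\sum_{k\ge0}a_k\mathscr{L}_\beta^k=:f(\mathscr{L}_\beta)$ with $f(z)=\sum_k a_kz^k$ a formal power series. The sum acts finitely on each polynomial. Explicitly,
\begin{equation*}
Tz^n=\sum_{k=0}^n a_k\,\frac{n!}{(n-k)!}\,\frac{\Gamma(n+\beta+1)}{\Gamma(n-k+\beta+1)}\,z^{n-k}
=\Gamma(n+\beta+1)\,h(z\partial_z)\Big[\sum_{k=0}^n a_kk!\binom nk z^{n-k}\Big],
\end{equation*}
where $h(s)=1/\Gamma(s+\beta+1)$. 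The bracket is the Appell polynomial $A_{n,g}$ of $g(z)=\sum_k a_kk!\,z^k/k!=\sum_ka_kz^k$, i.e.\ of $g=f$ read with $\gamma_k=a_kk!$.

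\emph{Step 2 (sufficiency).} Let $f\in\mathcal{LPI}$. Then $f$ is a uniform limit of polynomials with only nonnegative roots. By \eqref{eq:formulas for A and J}, the Jensen/Appell polynomials of such polynomials have only nonnegative roots. Passing to the limit, or using the standard Jensen argument behind Proposition \ref{prop:Jensen polynomial convergence}, $A_{n,f}$ has only nonnegative roots. Next, $h(s)=1/\Gamma(s+\beta+1)\in\LP$ has only negative roots $-\beta-1-m$, $m\geq0$. So by Lemma \ref{lem:Lag theorem}, $h(z\partial_z)A_{n,f}$ is real-rooted. It has the same sign pattern as $A_{n,f}$, because $h(j)>0$ for $j\ge0$. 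Hence the coefficients still alternate, and no root is negative. So $Tz^n$ has only nonnegative roots.

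\emph{Step 3 (necessity).} Suppose all $Tz^n$ have nonnegative roots. Let $m$ be the order of vanishing of $f$ at $0$. Factor out $\mathscr{L}_\beta^m$, which sends $z^n$ to a multiple of $z^{n-m}$ and preserves the hypothesis, so we may assume $a_0=1$. Consider the reversed rescaled polynomials
\begin{equation*}
Q_n(z)=\frac{z^n}{\Gamma(n+\beta+1)/\Gamma(\beta+1+n)}\,(Tz^n)\big|_{z\mapsto n^2/z}=\sum_{k=0}^n a_k\frac{n!}{(n-k)!\,n^k}\frac{\Gamma(n+\beta+1)}{\Gamma(n-k+\beta+1)\,n^k}z^k.
\end{equation*}
These have only nonpositive roots after $z\mapsto -z$, equivalently alternating coefficients, and constant term $1$. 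The $k$-th coefficient tends to $a_k$. The main obstacle is upgrading coefficientwise convergence to locally uniform convergence, so that the limit $f$ is entire and lies in $\mathcal{LPI}$. For a polynomial $1+b_1z+\dots$ with only positive roots $r_i^{-1}$, one has $|b_k|\le e_k(r)\le (\sum_i r_i)^k/k!=|b_1|^k/k!$. Since $b_1\to a_1$, this gives a uniform majorant $\sum_k C^k|z|^k/k!$. Dominated convergence then yields $Q_n\to f$ uniformly on compacts. As a locally uniform limit of polynomials with only positive roots, $f$ lies in $\mathcal{LPI}$ by the characterization recalled in \S\ref{subsec:LP class}, and by Step 1 we conclude $T=f(\mathscr{L}_\beta)$.
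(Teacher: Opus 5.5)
Your proof is correct. The structure step and the necessity direction follow the paper's route: reverse $Tz^n$, rescale (you by $n^2$, the paper by $n(n+\beta)$), control the coefficients using positivity of the roots, and identify the locally uniform limit as a member of $\mathcal{LPI}$. Your inequality $|b_k^{(n)}|\le |b_1^{(n)}|^k/k!$ is applied to the degree-$n$ polynomial itself, so it gives a majorant uniform in $n$. This directly justifies the locally uniform convergence, which the paper leaves implicit after bounding only the limiting coefficients $t_k$ via AM--GM on $p_{T,k}$.

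The genuine difference is in sufficiency. The paper writes $f(\mathscr{L}_\beta)z^n$ as the finite free multiplicative convolution of a rescaled Laguerre polynomial $L_n^{(\beta)}$ with $A_{n,f}$, see \eqref{eq:decomp of finite rect inf div}. It then invokes the Marcus--Spielman--Srivastava theorem that $\boxtimes_n$ preserves non-negative roots. You instead factor $Tz^n=\Gamma(n+\beta+1)\,h(z\partial_z)A_{n,f}$ with $h=1/\Gamma(\cdot+\beta+1)\in\LP$, apply Laguerre's theorem (Lemma \ref{lem:Lag theorem}), and exclude negative roots by the preserved sign pattern. Your route stays within classical P\'olya--Schur theory and uses only a lemma already stated in the paper. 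It also shows exactly where $\beta>-1$ is needed: $h$ has only negative zeros, and $h(j)>0$ for $j\ge 0$. The paper's route is a one-liner given the finite free machinery, and it reuses the decomposition it needs anyway for Theorem \ref{thm:rectangular Appell polynomials}.

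A few small slips, none affecting the argument:
\begin{itemize}
\item The prefactor $\Gamma(n+\beta+1)/\Gamma(\beta+1+n)$ in your definition of $Q_n$ equals $1$. To get the displayed coefficients you need $Q_n(z)=n^{-2n}z^n\,(Tz^n)(n^2/z)$.
\item The formula \eqref{eq:formulas for A and J} alone does not show that $A_{n,f}$ has non-negative roots for $f\in\mathcal{LPI}$. The clean justification is Proposition \ref{prop:Jensen polynomial convergence} (real-rootedness, since $\mathcal{LPI}\subset\LP$) combined with the alternating signs of the $\gamma_k$.
\item ``Only positive roots'' implies alternating coefficients, but not conversely. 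You only use the valid direction.
\end{itemize}
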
 The proof of this lemma follows much in the same way as the P\'olya--Schur theorem, and we include it in Appendix \ref{appendix}. We were also unable to find a proof of either the P\'olya--Schur or P\'olya--Benz theorems in any open source reference in English. The ``only if'' direction of our proof of Lemma \ref{lem:PS for rect} can also serve as a proof of the P\'olya--Benz theorem with $\mathscr{L}_{\beta}$ replaced by $\partial_{z}$ plus some obvious adjustments.

\section{The proof of Theorem \ref{thm:main result, general}}\label{sec:proof} To prove Theorem \ref{thm:main result, general} we proceed by proving asymptotics for the Appell polynomials as $n\rightarrow\infty$ in Theorem \ref{thm:Appell Planc-Roa asymp} below. Before stating this theorem we establish some notation and set up the main ideas of the proofs.


Applying \eqref{eq:Appell contour} to $A_{n+d,f_n}$ for some fixed $d\in\Z$, we set up a saddle point argument, where the exact contour may change from line to line, \begin{equation}\label{eq:clean saddle point set up}
\begin{aligned}
	A_{n+d,f_n}(z)&=\frac{(n+d)!}{2\pi\ii}\oint_{\Gamma}e^{xz}f_{n}(x)\frac{\dd x}{x^{n+d+1}}\\
	&=\frac{(n+d)!}{2\pi\ii}\oint_{\Gamma}\exp\left(xz+\log f_{n}(x)-n\log x\right)\frac{\dd x}{x^{d+1}}\\
	&=\frac{(n+d)!}{2\pi\ii n^{n+d}}\oint_{\Gamma}\exp\left(n\left[uz+\frac{1}{n}\log f_{n}(nu)-n\log u\right]\right)\frac{\dd u}{u^{d+1}}.
\end{aligned}
\end{equation} We define the function \begin{equation}\label{eq:saddle point function}
h_n(u):=zu+\frac{1}{n}\log f_{n}(nu)-\log u,
\end{equation} where $\log f_{n}(nu)$ is an analytic function on the simply connected domain $\{u:\Im u<-\frac{M_n}{n}\}$ with real part $\log|f_{n}(nu)|$ and such that $e^{\log f_{n}(nu)}=f_{n}(nu)$. This uniquely defines $h_n$ on this domain up to an imaginary constant where the exact choice has no effect on the value of \begin{equation}
\int_{\Gamma'}\exp(nh_n(u))\frac{\dd u}{u^{d+1}},
\end{equation} for any curve $\Gamma'\subset\{u: \Im u<-\frac{M_n}{n} \}$. $h_n$ can also be defined in an analogous way as a holomorphic function on any simply connected domain which does not contain $0$ nor a root of $f_{n}(nu)$.   
 
 In the saddle point method one chooses the contour $\Gamma$ to pass through a critical point $u_{n}^*$ of $h_n$ such that $\Re h_n(u)<\Re h_{n}(u_{n}^*)$ for all other $u\in\Gamma$ and the integral can be approximated using a Taylor expansion of $h_n$ about $u_{n}^*$, and the contribution from the remainder of the contour is asymptotically negligible. See \cite{OSullivan2019} for more details on the saddle point method. It is worth noting that $h_n$ is not holomorphic on a neighborhood of the entire contour $\Gamma$, which is a typical assumption for the most straightforward rigorous statements of the saddle point method, for example in \cite[Theorem 1.2]{OSullivan2019}. However, the function we are truly integrating in \eqref{eq:clean saddle point set up} is holomorphic on $\C\setminus\{0\}$, and we can define $h_{n}$ piece-wise on simply connected neighborhoods of $\Gamma$ which cover $\Gamma$ minus possibly a finite number of points at zeros of $f_n(nu)$, such that $\Re h_{n}(u)$ is continuous on $\Gamma$, minus possibly a finite number of points where $\Re h_n(u)$ tends to $-\infty$, and \begin{equation}
 	\oint_{\Gamma}\exp\left(nh_n(u)\right)\frac{\dd u}{u^{d+1}}=\oint_{\Gamma}e^{nuz}f_{n}(nu)\frac{\dd u}{u^{n+d+1}}.
 \end{equation} We will elaborate more in the proof of Theorem \ref{thm:Appell Planc-Roa asymp} and for now we proceed to study $h_n$ on the domain $\{u:\Im u<-\frac{M_n}{n} \}$.

We are interested in solutions to the saddle point equation \begin{equation}\label{eq:clean saddle point equation}
	h_{n}'(u)=z+\frac{f_{n}'(nu)}{f_{n}(nu)}-\frac{1}{u}=0.
\end{equation} This equation may have an infinite number of solutions in $\C$, particularly near the roots of $f_n(nu)$ for large $n$, and it is not immediately obvious which of these is actually relevant to our analysis. To help identify this distinguished point  we define the meromorphic function \begin{equation}\label{eq:clean R transform}
R_n(u)=-\frac{f_{n}'(nu)}{f_n(nu)}.
\end{equation} If $f_{n}\in\LP$, then $R_{n}$ is in fact the $R$-transform of a $\boxplus$-ID probability measure $\mu_{n}$ and \begin{equation}
G_{\mu_{n}}\left(\frac{1}{u}+R_n(u)\right)=u,
\end{equation} for all $u\in\Omega=\{w\in\C_{-}: \Im \left(\frac{1}{w}+R_{n}(w)\right)>0\}$, where $G_{\mu_{n}}$ is the Cauchy transform of $\mu_{n}$. 

It is now reasonable to guess that $R_n$ is close to an $R$-transform, and that solutions $u_n^*(z)$ to the saddle point equation $R_n(u)+1/u=z$ are close to a Cauchy transform. Let us record these observations in the following proposition and lemma. 
 \begin{proposition}\label{prop:convergence of R-transforms}
	Let $\{f_{n}\}_{n\geq 1}$ satisfy Assumption \ref{assump:main assumption}. 
	\begin{enumerate}
		\item Let $R_{\mu_{c,\Sigma}}$ be the $R$-transform of $\mu_{c,\Sigma}$, then 
		\begin{equation}
			\lim\limits_{n\rightarrow\infty}-\frac{f_{n}'(nu)}{f_{n}(nu)}= R_{\mu_{c,\Sigma}}(u),
		\end{equation} 
		uniformly on compact subsets of $\C_{-}$.
		
		\item Define $\left[ \log(1-xw)\frac{x^2+1}{x^2}+\frac{w}{x}\right]\Big|_{x=0}:=-\frac{w^2}{2}$ by continuous extension, then		
		\begin{equation}\label{eq:log of f_n convergence}
			\lim\limits_{n\rightarrow\infty}\frac{1}{n}\log f_{n}(nu)-\frac{1}{n}\log f_{n}(-n\ii)=-c(u+\ii)+\int_{\R}\log\left(\frac{1-ux}{1+\ii x}\right)\frac{x^2+1}{x^2}+\frac{u+\ii}{x}\dd\Sigma(x),
		\end{equation}  uniformly on compact subsets of $\C_{-}$.
		\item For any $M\in\R$ define $\log_{M}|z|:=\max(\log|z|,-M)$, then \begin{align}\label{eq:cutoff log of f_n convergence}
			\lim\limits_{n\rightarrow\infty} &\frac{1}{n}\log_{M}| f_{n}(nu)|-\frac{1}{n}\log_{M}| f_{n}(-n\ii)|\\
			&=\Re\left(-c(u+\ii)\right)+\int_{\R}\left[\log_{M}\left|{1-ux}\right|-\log_{M}\left|{1+\ii x}\right|\right]\frac{x^2+1}{x^2} +\Re\frac{u+\ii}{x}\dd\Sigma(x),\nonumber
		\end{align} uniformly on compact subsets of $\C_{-}\cup\R\setminus\{0\}$.
	\end{enumerate} 
\end{proposition}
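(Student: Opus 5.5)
The plan is to make everything explicit in terms of the measures $\Sigma_n$ and then pass to the limit using Assumption \ref{assump:main assumption}. From the product formula,
\[
\tfrac1n\log f_n(nu)=-c_nu-\tfrac{n\sigma_n^2}{2}u^2+\tfrac1n\sum_j\big[\log(1-n\alpha_{j,n}u)+n\alpha_{j,n}u\big].
\]
Differentiating and writing $x=n\alpha_{j,n}$, each root contributes $\frac1n\frac{x^2u}{1-xu}$. Since the atom of $\Sigma_n$ at $x$ carries weight $\frac1n\frac{x^2}{1+x^2}$, this contribution equals $u\frac{1+x^2}{1-xu}$ integrated against that atom. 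The Gaussian part fits the same pattern: $n\sigma_n^2u$ is the $x=0$ value of this kernel against the atom $n\sigma_n^2\delta_0$. Using the elementary identity $u\frac{1+x^2}{1-xu}=\frac{u+x}{1-xu}-x$, we get the exact representation
\[
-\frac{f_n'(nu)}{f_n(nu)}=\Big(c_n-\int t\,\dd\Sigma_n(t)\Big)+\int\frac{u+x}{1-xu}\,\dd\Sigma_n(x).
\]
By \eqref{eq:c assumptions} the constant tends to $c$. Part (1) therefore reduces to showing $\int k_u\,\dd\Sigma_n\to\int k_u\,\dd\Sigma$ for $k_u(x)=\frac{u+x}{1-xu}$, locally uniformly in $u\in\C_-$. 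The limit is $R_{\mu_{c,\Sigma}}$ by Proposition \ref{rthm:ID rep}.

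For this convergence I would take three steps.
\begin{itemize}
\item \emph{Complex atoms.} The atoms sit at $x=n\alpha_{j,n}$ with $\Im(1/x)=\Im(\alpha_{j,n}^{-1})/n\in[-M_n/n,M_n/n]$, which tends to $0$ since $M_n=o(n)$. Parametrising by $1/x$, these points lie in a shrinking strip around $\widehat\R$.
\item \emph{Uniform continuity of the kernel.} For $u$ in a compact subset of $\C_-$, $k_u$ is uniformly continuous and bounded on a neighbourhood of $\widehat\R=\R\cup\{\infty\}$ in that parametrisation, with $k_u(\infty)=-1/u$. Hence replacing each atom by a nearby real point costs $o(1)\cdot|\Sigma_n|(\R)$.
\item \emph{Limit of real measures.} The statement that three Hahn--Jordan parts vanish, together with boundedness of total variation, handles the non-positive parts. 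For the positive part, vague convergence plus $k_u\to-1/u$ at infinity gives the limit, provided no mass escapes to $\pm\infty$.
\end{itemize}
I expect the main subtlety in part (1) to be this no-escape condition. I would read the vague convergence in Assumption \ref{assump:main assumption} as convergence on the compactification $\widehat\R$ with the limit $\Sigma$ carrying no mass at $\infty$, i.e.\ as tightness plus convergence of total mass; this is exactly what continuity of the L\'evy pair under weak convergence gives. If mass did escape, it would add a term $-m/u$, i.e.\ a shift of the Cauchy-transform variable, so excluding it is genuinely necessary. Local uniformity in $u$ then follows from equicontinuity of $u\mapsto k_u$ on compacts, or from Vitali's theorem once the family is locally bounded.

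Part (2) follows from part (1) by integration. Since $\frac{\dd}{\dd u}\frac1n\log f_n(nu)=-R_n(u)$ and $\C_-$ is simply connected, I integrate along the segment from $-\ii$ to $u$ and use the locally uniform convergence. It remains to check that $\int_{-\ii}^u -k_w(x)\,\dd w$ equals
\[
\log\Big(\frac{1-ux}{1+\ii x}\Big)\frac{x^2+1}{x^2}+\frac{u+\ii}{x},
\]
together with the stated continuous extension $-\frac{w^2}{2}$ type limit at $x=0$, and to collect the constant term $-c(u+\ii)$. Fubini is justified because the integrand is bounded uniformly in $x\in\widehat\R$ for $w$ on the segment.

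Part (3) is where I expect the real work. Near the real axis the zeros of $f_n(nu)$ accumulate, so neither $\frac1n\log|f_n(nu)|$ nor $R_n$ converges pointwise there. The plan is to use that, by part (2), $\frac1n\log|f_n(nu)|$ is the logarithmic potential
\[
\int\log|1-ux|\tfrac{x^2+1}{x^2}\,\dd\Sigma_n(x)
\]
plus explicit linear terms, and that the limit potential is continuous on $\R\setminus\{0\}$ because $\log|1-ux|$ is locally integrable against $\Sigma$. The truncation $\log_M$ is designed to make the integrand bounded below, so that weak convergence of $\Sigma_n$ applies away from the singularity $x=1/u$. Near that singularity I would split the integral into a region $|x-1/u|<\delta$, bounded by $M$ times the $\Sigma_n$-mass of the region, and the remainder. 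Uniformity on compacts of $\C_-\cup\R\setminus\{0\}$ would then follow from upper semicontinuity of the subharmonic functions together with a Hartogs-type lemma: locally uniform convergence in $\C_-$ plus uniform upper bounds upgrades to convergence up to the boundary where the limit is continuous. Making this step precise for the truncated logarithm of the whole product, rather than of each factor, is the main obstacle I anticipate.
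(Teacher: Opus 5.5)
Your parts (1) and (2) are fine. Part (1) is the paper's argument: the same exact representation $c_n-\int t\,\dd\Sigma_n(t)+\int\frac{u+x}{1-xu}\,\dd\Sigma_n(x)$, followed by a bounded continuous kernel near the inverted strip containing the atoms. Your remark on escape of mass makes explicit the weak convergence the paper tacitly uses, since $\frac{u+x}{1-xu}\to-1/u\neq0$ as $x\to\pm\infty$. For (2) you integrate (1) from $-\ii$ to $u$, whereas the paper expands the logarithm directly against $\Sigma_n$; both routes work.

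The genuine gap is in part (3). You read $\frac1n\log_M|f_n(nu)|$ as the truncation of the whole product. Under that reading the statement is false, so the obstacle you anticipate cannot be overcome. Indeed, $\frac1n\max\{\log|f_n(nu)|,-M\}=\max\{\frac1n\log|f_n(nu)|,-M/n\}$ is truncated at a level tending to $0$. Take $f_n(z)=(1-x_0z/n)^{k_n}e^{k_nx_0z/n}$ with $x_0\in\R\setminus\{0\}$ and $k_n/n\to m>0$, which satisfies Assumption~\ref{assump:main assumption}. Then your left-hand side tends to $\max\{m(\log|1-ux_0|+x_0\Re u),0\}-m\log|1+\ii x_0|$. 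The right-hand side equals $m(\log_M|1-ux_0|+x_0\Re u)-m\log|1+\ii x_0|$. For $x_0=1$, $u=0.9-0.01\ii$ and $M\geq3$ these differ, already inside $\C_-$.

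The tools you propose would not rescue this reading either. The untruncated limit potential is not continuous on $\R\setminus\{0\}$ in general: it equals $-\infty$ at $u=1/x_0$ whenever $\Sigma$ has an atom at $x_0\neq0$, as in the paper's example $\nu=\delta_1+\delta_2$. A Hartogs-type lemma only yields asymptotic upper bounds. It never gives the lower bounds, which are exactly what break down at the zeros of $f_n(nu)$ accumulating on $\R$.

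The missing idea is that the truncation is meant factor by factor:
$\frac1n\log_M|f_n(nu)|:=-\Re(c_nu)-\frac{n\sigma_n^2}{2}\Re(u^2)+\frac1n\sum_j\big[\log_M|1-n\alpha_{j,n}u|+\Re(n\alpha_{j,n}u)\big]$.
This is what the right-hand side encodes, and it is what the paper's one-line proof (``nearly identical to (2)'') uses. It is also all that the later application ($H_{M,n}\geq H_n$, subharmonicity) requires.

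With this reading, the left-hand side of (3) has the same structure as the finite-$n$ identity behind (2). It equals $-\Re\big((u+\ii)(c_n-\int t\,\dd\Sigma_n)\big)$ plus the integral against $\Sigma_n$ of the kernel $k^M_u(x)$ appearing on the right of (3). For $M\geq0$ and $u$ in a compact $K\subset\C_-\cup\R\setminus\{0\}$, this kernel is bounded and jointly continuous on $K\times(\R\cup\{\infty\})$:
\begin{itemize}
\item it tends to $-\frac{\Re(u^2)+1}{2}$ as $x\to0$;
\item it tends to $\log|u|$ as $|x|\to\infty$;
\item it stays bounded near $x=1/u$ precisely because of the truncation (your $\delta$-splitting bound is this observation).
\end{itemize}
So (3) follows from weak convergence of $\Sigma_n$ exactly as (1) did, with complex atoms handled as in your part (1). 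Uniformity in $u$ comes from equicontinuity of $u\mapsto k^M_u$ rather than Montel, since these functions are not holomorphic. No potential theory is needed.
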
 

The choice of $\ii$ in $\frac{1}{n}\log f_{n}(-n\ii)$ in \eqref{eq:log of f_n convergence} was arbitrary. The purpose of this term is to guarantee an antiderivative of $R_n$ which is not blowing up in $n$. 
\begin{remark}
	 As discussed in Remark \ref{rem:optimality}, the asymptotic \eqref{eq:log of f_n convergence} is equivalent to convergence of the exponential profiles of the polynomials considered in Theorem \ref{thm:Polya schur and multiplicative groups}, as we will see during its proof. Additionally, recalling that $\Delta \log|z|=2\pi\delta_{0}$ in the distributional sense, it is not hard to show \eqref{eq:log of f_n convergence} is equivalent to Assumption \ref{assump:main assumption} when $f_{n}\in\LP$.
\end{remark}

The following lemma captures the properties of solutions to \eqref{eq:clean saddle point equation} we will need.

\begin{lemma}\label{lem:clean lem_unique_saddlepoint}
\begin{enumerate}
	\item Let $\mu$ be any freely infinitely divisible distribution. Then, for any $z\in\C_{+}\cup\{z\in\R: -\infty<\Im G_{\mu}(z+\ii0)<0\}$
	 \begin{equation}\label{eq:general R transform version saddle point.}
	z-\frac{1}{u}-R_{\mu}(u)=0
\end{equation} has a unique solution $G_{\mu}(z)$ in $\C_-$. Hence, if $f_n\in\LP$ and $z\in\C_{+}\cup\{z\in\R:-\infty< \Im G_{\mu_{c,\Sigma}}(z+\ii0)<0 \}$, then \eqref{eq:clean saddle point equation} has a unique solution in $\C_{-}$ given by $u_n^*(z)=G_{\mu_n}(z)$.

\item If $\{f_{n}\}$ satisfy Assumption \ref{assump:main assumption}, then for any $\eps>0$ \eqref{eq:clean saddle point equation} has at most one solution in $\{u\in\C:\Im u<-\eps\}$ for $n$ sufficiently large. Additionally, if $\eps>0$ is sufficiently small, then this unique solution $u_n^*(z)$ exists. 

\item Let $\{f_n\}$ satisfy Assumption \ref{assump:main assumption} and let $B\subset\C_{+}\cup\{z\in\R: -\infty<\Im G_{\mu_{c,\Sigma}}(z+\ii0)<0 \}$ be a compact subset. Let $\eps>0$ be small enough such that \eqref{eq:clean saddle point equation} has a unique solution $u=u_n^*(z)\in\{u\in\C:\Im u<-\eps\}$ for all $z\in B$. Then, $u_n^*$ is analytic on a neighborhood of $B$ and \begin{equation}\label{eq:convergence of ID Cauchy}
	\lim\limits_{n\rightarrow\infty}u_n^*(z)=G_{\mu_{c,\Sigma}}(z)=:u^*(z)
\end{equation} uniformly on $B$. Moreover, $u^*$ can be analytically continued from the upper-half plane to a neighborhood of $\C_+\cup\{z\in\R: -\infty< \Im G(z+\ii0)<0\}$ and we will denote this extension also by $u^*$.

\end{enumerate}
\end{lemma}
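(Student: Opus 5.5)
For part (1), I will work with the function $H(u):=\frac{1}{u}+R_\mu(u)$ on $\C_-$, using the L\'evy--Khintchine representation of Proposition \ref{rthm:ID rep}. From \eqref{eq:free levy-khin},
\[
\Im H(u)=-\Im u\left(\frac{1}{|u|^2}+\int_{\R}\frac{1+x^2}{|1-xu|^2}\,\dd\Sigma(x)\right),
\]
so $H$ maps $\C_-$ into $\C_+$. The key point is injectivity of $H$ on the set $\Omega=\{u\in\C_-:\Im H(u)>\Im z\}$ (or a suitable subdomain). I will use the standard fact for $\boxplus$-ID laws (Bercovici--Voiculescu): the map $u\mapsto H(u)$ restricted to the component $\Omega_\mu$ of $H^{-1}(\C_+)$ containing a neighborhood of $0$ in $\C_-$ is a conformal bijection onto $\C_+$, with inverse $G_\mu$. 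This is the subordination/Denjoy--Wolff picture: $G_\mu(z)$ is the unique fixed point in $\C_-$ of the self-map $u\mapsto G_{\delta}$-type map $w\mapsto 1/(z-R_\mu(w))$, which sends $\C_-$ into $\C_-$ when $\Im z>0$ because $\Im R_\mu\le 0$ on $\C_-$. The Denjoy--Wolff theorem then gives uniqueness of the fixed point in $\C_-$, since the map is not an automorphism. Any solution $u$ of \eqref{eq:general R transform version saddle point.} is exactly a fixed point of this map, so the solution is unique.

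For real $z$ with $-\infty<\Im G_\mu(z+\ii0)<0$, I will pass to the limit $z+\ii\eta\to z$. The self-map $\phi_z(w)=1/(z-R_\mu(w))$ still maps $\C_-$ into $\overline{\C_-}$. I expect strict contraction at the interior fixed point $G_\mu(z+\ii 0)\in\C_-$, with $|\phi_z'|<1$ there, since $\Im$ strictly decreases unless $\Sigma=0$ (and $\Sigma=0$ is trivial). By Schwarz--Pick, a holomorphic self-map of the half-plane that has an interior fixed point and is not an automorphism has only that fixed point. The specialization to $f_n\in\LP$ then follows by applying this to $\mu_n$, whose $R$-transform is $R_n$.

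For parts (2) and (3), I will use Proposition \ref{prop:convergence of R-transforms}(1): $R_n\to R_{\mu_{c,\Sigma}}$ uniformly on compact subsets of $\C_-$. Together with an a priori bound for large $|u|$ (using $\frac{1}{n}\sum|\alpha_{j,n}|^2 n^2$ bounded, so $R_n(u)$ is controlled on $\{\Im u<-\eps\}$ for $|u|$ large), this gives $h_n'(u)=z-\frac1u-R_n(u)\to z-\frac1u-R_{\mu_{c,\Sigma}}(u)$ locally uniformly on $\{\Im u<-\eps\}$. Here $M_n=o(n)$ makes the poles $\alpha_{j,n}^{-1}/n$ eventually lie within $\eps$ of $\R$.

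Hurwitz's theorem, combined with the uniqueness from part (1), gives exactly one zero near $G_{\mu_{c,\Sigma}}(z)$ when $\eps<|\Im G(z)|$. Zeros elsewhere in the region are excluded because the limit has no other zeros and $|h_n'|$ is bounded below near infinity, where $h_n'\approx z\neq0$, and near the boundary line. Note that (2) is stated without $z$; I will read it for $z$ in the admissible set. Analyticity of $u_n^*$ follows from the implicit function theorem, since the limit zero is simple; simplicity comes from $\phi_z'\ne1$. Uniform convergence on $B$ follows from the Hurwitz argument applied uniformly, with compactness of $B$. Continuation of $u^*$ across the real set comes from the same implicit function theorem applied to $z-\frac1u-R_{\mu_{c,\Sigma}}(u)$ at $u=G(z+\ii0)\in\C_-$.

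The main obstacle is the uniform control far from compacts: excluding spurious saddle points of $h_n'$ for large $|u|$ or near $\Im u=-\eps$ when the $f_n$ have complex zeros. Here I would bound $|R_n(u)-R_{\mu_{c,\Sigma}}(u)|$ via the integral representation against $\Sigma_n$, with tails controlled by vague convergence plus tightness from \eqref{eq:c assumptions}.
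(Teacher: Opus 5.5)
Your route for (1), and for the analytic continuation in (3), differs from the paper's and works after the corrections below.

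\textbf{How the two routes compare.} The paper identifies $G_\mu(\C_+)$ with $\{u\in\C_-:\Im(\frac1u+R_\mu(u))>0\}$. For this it quotes the injective continuous extension of $F_\mu$ to $\C_+\cup\R$ (Belinschi--Bercovici) and the description of $F_\mu(\R)$ as the curve $\{x+\ii v(x)\}$ (Huang, Bercovici--Wang--Zhong). It reads off uniqueness from the bijectivity of $J_\mu$, and quotes Belinschi--Bercovici again to continue $u^*$ across the bulk.

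You instead observe that solutions in $\C_-$ are exactly the fixed points of $\phi_z(w)=1/(z-R_\mu(w))$. This is a holomorphic self-map of $\C_-$ because $\Im R_\mu\le 0$ there. The inequality is strict when $\Sigma\neq0$, which covers real $z$; if $\Sigma=0$ the admissible real set is empty. Existence comes from $G_\mu$, by analytic continuation of $R_\mu(G_\mu(z))+1/G_\mu(z)=z$ from $\Gamma_{\alpha,\beta}$, resp.\ by letting $z+\ii\eta\to z$. You continue $u^*$ by the implicit function theorem at a simple zero, together with uniqueness for nearby $z\in\C_+$.

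Your route uses only \eqref{eq:free levy-khin}. It gives simplicity of the zero, i.e.\ $h''(u^*)\neq0$, which is used later. Applied to $\mu_n$, it gives uniqueness in all of $\C_-$ for every $n$ when $f_n\in\LP$. The paper's route in return describes explicitly the region where the saddle point equation is solvable.

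\textbf{Corrections in (1).}
\begin{itemize}
\item Your formula for $\Im H$ has the wrong sign. In fact $\Im\bigl(\tfrac1u+R_\mu(u)\bigr)=\Im u\,\bigl(\int_{\R}\frac{1+x^2}{|1-xu|^2}\,\dd\Sigma(x)-\frac1{|u|^2}\bigr)$, so $H$ does not map $\C_-$ into $\C_+$. For the standard semicircle, $H(u)=u+\frac1u$ and $\{\Im H>0\}\cap\C_-$ is the lower half-disc.
\item Drop that claim and the ``component'' framing. A component argument would only give uniqueness inside one component, which is exactly what the paper's boundary-curve argument is needed for. The fixed-point argument needs neither.
\item For real $z$, your claim $|\phi_z'(u^*)|<1$ is false. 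For the standard semicircle and $z\in(-2,2)$, $\phi_z(w)=1/(z-w)$ is an elliptic automorphism of $\C_-$ with $|\phi_z'(u^*)|=|u^*|^2=1$. Likewise your hypothesis ``not an automorphism'' fails whenever $\Sigma$ is a single atom.
\item The fix is the form of Schwarz--Pick valid for any holomorphic self-map other than the identity: it has at most one interior fixed point $p$, and $\phi'(p)\neq1$ there. Since $u-\phi_z(u)=u\,\bigl(z-\tfrac1u-R_\mu(u)\bigr)/\bigl(z-R_\mu(u)\bigr)$, this yields both uniqueness and simplicity of the zero.
\end{itemize}

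\textbf{Part (2).} Your Hurwitz argument on compact sets is the paper's Rouch\'e argument. Your control near infinity, however, is wrong as stated:
\begin{itemize}
\item $h_n'\approx z$ fails whenever $\sigma^2>0$, since the term $n\sigma_n^2u$ in $R_n$ dominates.
\item $\frac1n\sum_j n^2|\alpha_{j,n}|^2$ need not be bounded under Assumption~\ref{assump:main assumption}. It diverges for the Cauchy limit, where $\int x^2\,\dd\Sigma=\infty$.
\item \eqref{eq:c assumptions} says nothing about tightness of $\Sigma_n$.
\end{itemize}

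For $z\in\C_+$, a correct replacement is as follows. Since $f_n$ is real with zeros in $|\Im r|\le M_n$, pairing conjugate zeros gives $\Im R_n\le0$ on $\{\Im u<-M_n/n\}$. Hence $\Im h_n'(u)\ge\Im z-1/|u|$ there, so all zeros lie in $\{|u|\le 1/\Im z\}$, and Hurwitz on a compact set suffices.

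For real $z$ and $f_n\in\LP$, your Schwarz--Pick argument applied to $\mu_n$ gives global uniqueness.

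For real $z$ and non-real zeros of $f_n$, neither your sketch nor the paper's Rouch\'e-on-compacts argument controls $\{\Im u<-\eps\}$ near infinity. This does not affect what is used later: the proof of Theorem~\ref{thm:Appell Planc-Roa asymp} only needs $u_n^*$ to be the unique zero in a fixed neighborhood of $u^*(B)$, and Hurwitz provides exactly that.
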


For fixed $z$, we may drop its dependence and write $u_n^*(z)=u_n^*$ and $u^*(z)=u^*$.
We are now ready to state our main result on the asymptotics of Appell sequences.

\begin{theorem}\label{thm:Appell Planc-Roa asymp}
	Let $\{f_n\}_{n\geq1}$ be a sequence of functions satisfying Assumption \ref{assump:main assumption} and let $d$ be a fixed integer. \begin{enumerate}
		\item If $z\in\C_{+}$, then  \begin{equation}\label{eq:PR asymp upper half}
			\begin{aligned}
				A_{n+d,f_n}(z)&=\frac{(n+d)!}{n^{n+d}}\sqrt{\frac{(u^*)'(z) }{2\pi n}}\frac{f_{n}\left(nu_n^*(z) \right)\exp\left(nzu_n^*(z) \right)}{u_n^*(z)^{n+d+1}}(1+o(1)).
			\end{aligned}
		\end{equation}
	
		\item If $z\in\R$ and $-\infty<\Im G_{\mu_{c,\Sigma}}(z+\ii0)<0$, then  \begin{equation}\label{eq:PR asymp bulk}
			\begin{aligned}
				A_{n+d,f_n}(z)&=2\Re\left[\frac{(n+d)!}{n^{n+d}}\sqrt{\frac{(u^*)'(z) }{2\pi n}}\frac{f_{n}\left(nu_n^*(z) \right)\exp\left(nzu_n^*(z) \right)}{u_n^*(z)^{n+d+1}}\right](1+o(1)),
			\end{aligned}
		\end{equation}  where $(u^*)'$ is the derivative of the analytic extension of $u^*$ from $\C_{+}$ to $\C_+\cup\{z\in\R: -\infty< \Im G(z+\ii0)<0\}$ as defined in Lemma \ref{lem:clean lem_unique_saddlepoint}.
	\end{enumerate} Moreover, \eqref{eq:PR asymp upper half} and \eqref{eq:PR asymp bulk} hold uniformly on compact subsets of $\C_{+}$ and $\{z\in\R:-\infty< \Im G_{\mu_{c,\Sigma}}(z+\ii0)<0 \}$, respectively. 
\end{theorem}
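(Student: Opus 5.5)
We use the saddle point method on the contour integral \eqref{eq:clean saddle point set up}. Write
\[
A_{n+d,f_n}(z)=\frac{(n+d)!}{2\pi\ii\, n^{n+d}}\oint_\Gamma e^{nh_n(u)}\frac{\dd u}{u^{d+1}} .
\]
The aim is a contour $\Gamma$ passing through $u_n^*(z)$ (and, in case (2), also through $\overline{u_n^*(z)}$) on which $\Re h_n$ is uniquely maximized there.

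\textbf{Local contribution.} By Lemma \ref{lem:clean lem_unique_saddlepoint}, $u_n^*\to u^*=G_{\mu_{c,\Sigma}}(z)\in\C_-$, uniformly on compacts. Proposition \ref{prop:convergence of R-transforms}(1) gives $R_n\to R_{\mu_{c,\Sigma}}$ locally uniformly in $\C_-$, and by Cauchy estimates the derivatives converge as well. Hence
\[
h_n''(u_n^*)=-R_n'(u_n^*)+(u_n^*)^{-2}\to -R'(u^*)+(u^*)^{-2}.
\]
Differentiating $z=R(u^*(z))+1/u^*(z)$ in $z$ shows this limit equals $-1/(u^*)'(z)$, which is nonzero. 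A quadratic Taylor expansion of $h_n$ on a disc of radius $n^{-1/2+\delta}$ around $u_n^*$ is valid, since $h_n'''$ is uniformly bounded there. This yields the contribution
\[
\frac{e^{nh_n(u_n^*)}}{(u_n^*)^{d+1}}\sqrt{\frac{2\pi}{n\,(-h_n''(u_n^*))}}\,\frac{1}{2\pi\ii}\cdot \ii ,
\]
with the orientation chosen along the steepest descent direction. After substituting $e^{nh_n(u_n^*)}=f_n(nu_n^*)e^{nzu_n^*}(u_n^*)^{-n}$, this gives exactly \eqref{eq:PR asymp upper half}, with prefactor $\sqrt{(u^*)'(z)/(2\pi n)}$ and the branch fixed by continuity.

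\textbf{Case (2).} For real $z$, $f_n$ is real on $\R$ up to the strip perturbation, so the integrand has the approximate reflection symmetry $u\mapsto\bar u$. The conjugate point $\overline{u_n^*}$ is then a saddle with conjugate contribution; in the $\LP_{M_n}$ case it is a saddle of the reflected function, handled by the same estimates in $\C_+$. Summing the two contributions gives $2\Re[\cdot]$. Here we need that $\Im u^*<0$ strictly, which is exactly the hypothesis $\Im G(z+\ii0)<0$, so the two saddles are separated. A factor $(1+o(1))$ on a $2\Re$ is only legitimate because both terms share the same modulus asymptotically. We will state it as $2\Re[X(1+o(1))]$, which is what is used later.

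\textbf{Global estimate (main obstacle).} We must exhibit $\Gamma$ encircling $0$ with $\Re h_n(u)\le \Re h_n(u_n^*)-\eta$ off the $n^{-1/2+\delta}$-neighbourhood of the saddle(s), where $\eta>0$ is fixed away from the small discs. Closeness to the maximum near the saddle is handled by the quadratic decay. Using Proposition \ref{prop:convergence of R-transforms}(3), $\Re h_n$ (with the cutoff $\log_M$ to tame zeros of $f_n$ near $\R$), normalized by the constant $\frac1n\log|f_n(-n\ii)|$, converges uniformly on compacts of $\C_-\cup\R\setminus\{0\}$ to the harmonic limit
\[
H(u)=\Re\Big(zu-c u+\int\cdots\dd\Sigma\Big)-\log|u| .
\]
We therefore build $\Gamma$ from the limit $H$ and transfer it to $h_n$. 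Note that $H$ is superharmonic across $\R$, since the $\log_M$ terms only decrease it, and $H\to-\infty$ only near $0$.

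The plan is to take $\Gamma$ as the boundary of the connected component of the superlevel set $\{H>H(u^*)\}$ containing $0$, pushed slightly outward, and to pass through $u^*$ along the steepest descent path. Three facts are needed. Uniqueness of the critical point in $\{\Im u<-\eps\}$ (Lemma \ref{lem:clean lem_unique_saddlepoint}(2)) plus the maximum principle give the topology of the sublevel sets: the sublevel set near $u^*$ has exactly two "valleys", and they are connected through a region surrounding $0$. Symmetry handles the upper half-plane. Where the level curve approaches $\R$ away from $0$, the upper bound from the $\log_M$ cutoff still suffices, since we only need upper bounds on $|f_n|$.

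Finally, the contributions from $|u|$ large or very small are controlled by the growth $\Re h_n(u)\to -\infty$ as $u\to0$ (because of the $-\log u$ term) and by a crude order-$2$ bound on $f_n$. Uniform convergence on compacts then transfers the strict gap $\eta$ from $H$ to $\Re h_n$ for $n$ large, which proves the estimate. Uniformity in $z$ on compacts follows because every step holds uniformly in $z$.
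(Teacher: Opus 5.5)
Your local Gaussian analysis at $u_n^*$ (using $h_n''(u_n^*)=-1/(u_n^*)'(z)$) matches the paper. So does the transfer from the limit $H$ to $h_n$ via the $\log_M$ cutoff, which only needs upper bounds on $|f_n|$, and so does the reflection step. Your reformulation of case (2) as $2\Re[X(1+o(1))]$ is in fact what the argument proves; the literal form $2\Re[X](1+o(1))$ does not follow where $\Re X$ is small relative to $|X|$.

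The genuine gap is the step you yourself call the main obstacle, which is the content of Proposition \ref{prop:Contour}: a curve $\Gamma_-\subset\C_-$ from $\R_-$ to $\R_+$ through $u^*$ on which $H(u^*)$ is the unique maximum. You take $\Gamma$ to be the boundary of the component of $\{H>H(u^*)\}$ containing $0$. This presupposes that $u^*$ lies on that boundary and that the two descending valleys at $u^*$ reach $\R$ on opposite sides of $0$. Your only justification is uniqueness of the critical point plus the maximum principle, which is interior information. But $H$ lives on a half-plane whose boundary $\R$ carries singularities. Interior harmonicity does not by itself exclude superlevel regions of $H$ attached to an interval of $\R\setminus\{0\}$, nor sublevel components escaping to infinity. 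Either of these would change where the valleys at $u^*$ land.

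The paper closes this step with the boundary and far-field control of Lemma \ref{lem:H_Monotonicity}: $\partial_yH(x-\ii0)=-\Im z\le 0$ on $\R\setminus\{0\}$, and $\partial_yH<-\eps$ for $|u|$ large. Combined with Hopf's lemma, this rules out holes of $\sub(t)$ touching $\R$ away from $0$ and forces every sublevel component to reach $\R$. A Morse-theoretic argument on the compactified region $\{|u|\le R,\ \Im u\le-\eps\}$ then shows that the first $\R_-$-to-$\R_+$ connection of $\sub(t)$ forms exactly at $t=H(u^*)$, through $u^*$. You need an argument of this kind; without it the global estimate is asserted, not proved.

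Several signs in your geometric picture are also reversed.
\begin{itemize}
\item Since $f_n(0)=1$, $\Re h_n(u)=\Re(zu)+\frac1n\log|f_n(nu)|-\log|u|\to+\infty$ as $u\to0$. The origin is the peak that $\Gamma$ must encircle at a fixed distance. This is consistent with your superlevel-component construction, but it contradicts your claim that small $|u|$ is controlled by $\Re h_n\to-\infty$.
\item $H=-\infty$ occurs at the reciprocals of atoms of $\nu$ on $\R$, not at $0$.
\item $H$ is subharmonic, not superharmonic, near $\R$, because the terms $\log|1-ut|$ are subharmonic.
\end{itemize}
Finally, $f_n\in\LP_{M_n}$ is a real entire function. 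For real $z$ the symmetry $u\mapsto\bar u$ is therefore exact, $\overline{u_n^*}$ is an exact saddle, and its contribution is exactly the conjugate. With only the ``approximate'' symmetry you describe, the $2\Re$ form would not follow.
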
 Note that by Stirling's formula, the prefactor satisfies $\frac{(n+d)!}{n^{n+d}}\sqrt{\frac{1}{2\pi n}}\sim e^{-n}$. However, while we only consider fixed $d$ we choose to leave the prefactor in this form as a potential prediction when $d$ is growing with $n$. 

With Proposition \ref{prop:convergence of R-transforms}, Lemma  \ref{lem:clean lem_unique_saddlepoint}, and Theorem \ref{thm:Appell Planc-Roa asymp} in hand we are ready to prove Theorem \ref{thm:main result, general}. \begin{proof}[Proof of Theorem \ref{thm:main result, general}]
 Let $G_n$ be the Cauchy transform of $\llbracket A_{n,f_n}\rrbracket$. It is straightforward to check that because $\{A_{j,f_n}\}_{j\geq 1}$ is an Appell sequence that  \begin{equation}
	 	G_{n}(z)=\frac{1}{n}\frac{A_{n,f_n}'(z)}{A_{n,f_n}(z)}=\frac{A_{n-1,f_n}(z)}{A_{n,f_n}(z)}.
	 \end{equation}For $z\in\C_{+}$ we have by Theorem \ref{thm:Appell Planc-Roa asymp}, \eqref{eq:PR asymp upper half} and Lemma  \ref{lem:clean lem_unique_saddlepoint}  that \begin{equation}\label{eq:convergence of Cauchy for ERM} 
				\lim\limits_{n\rightarrow\infty} G_{n}(z)=\lim\limits_{n\rightarrow\infty} u_n^*(z)(1+o(1))=G_{\mu_{c,\Sigma}}(z). 
	\end{equation} As $A_{n,f_n}$ is a real polynomial it is then trivial to prove a similar limit for $z\in\C_{-}$. Let $B\subset\C_{+}$ be some set with piece-wise $C^1$ boundary and note it follows from Cauchy's integral formula that \begin{equation}
	\llbracket A_{n,f_n}\rrbracket(B)=\frac{1}{2\pi\ii}\oint_{\partial B}G_{n}(z)\dd z.
	\end{equation} It then follows from \eqref{eq:convergence of Cauchy for ERM} that \begin{equation}\label{eq:no mass off the line}
	\begin{aligned}
		\llbracket A_{n,f_n}\rrbracket(B)\rightarrow0\\
	\end{aligned}
		\end{equation} By Helly's selection theorem, every subsequence of $\llbracket A_{n,f_n}\rrbracket$ has a vaguely convergent further subsequence with limit $\rho$, a sub-probability measure. However, from \eqref{eq:no mass off the line} $\rho$ must be supported on $\R$, and for any $\eps>0$ it also follows from \eqref{eq:convergence of Cauchy for ERM} that  there exists $y>0$ such that\begin{equation}
	\rho(\R)\geq \int_{\R}\frac{y^2}{x^2+y^2}\dd \rho(x)=-y\Im G_{\rho}(\ii y)=-y\Im G_{\mu_{c,\Sigma}}(\ii y)=1-\eps.
	\end{equation} Thus, $\rho$ is in fact a probability measure and the sequence $\llbracket A_{n,f_n}\rrbracket$ is tight. In particular, every subsequence of $\llbracket A_{n,f_n}\rrbracket$ has a subsequence that weakly converges to $\rho$ on $\R$ having the same Cauchy transform $G_{\mu_{c,\Sigma}}$, and by unicity of the Cauchy transform we obtain weak convergence $\llbracket A_{n,f_n}\rrbracket \Rightarrow \rho=\mu_{c,\Sigma} $.
\end{proof}

We prove Proposition \ref{prop:convergence of R-transforms} and Lemma \ref{lem:clean lem_unique_saddlepoint}  before proving Theorem \ref{thm:Appell Planc-Roa asymp} in the section below.

\begin{proof}[Proof of Proposition \ref{prop:convergence of R-transforms}]
	For any $u\in\C_{-}$ \begin{equation}
		\begin{aligned}
			-\frac{f_{n}'(nu)}{f_{n}(nu)}&=c_{n}+n\sigma_{n}^2u+\sum_{j=1}^{\infty}\frac{\alpha_{j,n}}{1-n\alpha_{j,n}u}-\alpha_{j,n}\\
			&=c_n+n\sigma_{n}^{2}u+\frac{1}{n}\sum_{j=1}^{\infty}\frac{n\alpha_{j,n}}{1-n\alpha_{j,n}u}-n\alpha_{j,n}\\
			&=c_n-\int_{\C}x\dd\Sigma_{n}(x)+\int_{\C}\frac{u+x}{1-xu}\dd\Sigma_{n}(x).
		\end{aligned}
	\end{equation}  In general, $\supp\left(\Sigma_{n}\right)=\{0\}\cup\{n\alpha_{j,n}\}\subset\C$. Note, by assumption  $f_{n}\in\LP_{M_n}$ for some $0\leq M_n=o(n)$, and we can strengthen this bound on the support to  \begin{equation}\label{eq:supp_inclusion}
		\supp\left(\Sigma_{n}\right)=\{0\}\cup\{n\alpha_{j,n}\}\subset \C\setminus \left(B_{\frac{n}{2M_n} }\left(\frac{-\ii n}{2M_n}\right)\cup B_{\frac{n}{2M_n} }\left(\frac{\ii n}{2M_n}\right) \right),
	\end{equation} where $B_{r}(w)$ is an open disk of radius $r$ centered at $w$. Note that the right hand side of \eqref{eq:supp_inclusion} is precisely the image of $\R+\ii[-M_n/n,M_n/n]$ under the inversion map. Eventually $\frac{1}{u}$ is contained in these open disks away from the support of $\Sigma_{n}$. Thus, $h_{u}(x)=\frac{u+x}{1-xu}$ is a bounded continuous function on $\bigcup_{n\geq N}\supp\left(\Sigma_{n}\right)$ for some $N$, and moreover the bound is uniform for $u$ in compact subsets of $\C_{-}$ with the subspace topology. Thus, by \eqref{eq:Sigma_n assumptions} of Assumption \ref{assump:main assumption} \begin{equation}
		\begin{aligned}
			\lim\limits_{n\rightarrow\infty}-\frac{f_{n}'(nu)}{f_{n}(nu)}&= c+ \int_{\R}\frac{u+x}{1-xu} \dd\Sigma(x), 
		\end{aligned}
	\end{equation} which is the $R$-transform of the $\mu_{c,\Sigma}$. Moreover, this convergence is uniform on compact subsets of $\C_{-}$ by Montel's Theorem.
	
	The proof of \eqref{eq:log of f_n convergence} follows in a similar manner after recalling we define $$\left[ \log(1-xw)\frac{x^2+1}{x^2}+\frac{w}{x}\right]\Big|_{x=0}:=-\frac{w^2}{2}.$$ Specifically, \begin{equation}
		\begin{aligned}
			\frac{1}{n}\log f_n(nu)-\frac{1}{n}\log f_{n}(-n\ii)&=-\frac{n\sigma_n^2}{2}u^2-c_nu+\frac{1}{n}\sum_{j=1}^{\infty}\log\left(1-un\alpha_{j,n}\right)+un\alpha_{j,n}\\
			&\quad+\frac{n\sigma_n^2}{2}(-\ii)^2+c_n\ii-\frac{1}{n}\sum_{j=1}^{\infty}\log\left(1+\ii n\alpha_{j,n}\right)-\ii n\alpha_{j,n}\\
			&=-(u+\ii)\left[c_n-\int x\dd\Sigma_{n}(x)\right]\\
			&\quad +\int_{\C}\left[\log(1-ux)-\log(1+\ii x)\right]\frac{x^2+1}{t^2}+\frac{u+\ii}{x}\dd\Sigma_{n}(x)
		\end{aligned}
	\end{equation} The integrand is continuous away from $x=\frac{1}{u}$ and by the same argument as above is  a continuous bounded function on some domain containing $\bigcup_{n\geq N}\supp\Sigma_{n}$ for some large $N$. \eqref{eq:log of f_n convergence} then follows from Assumption \ref{assump:main assumption}.
	
	Finally, \eqref{eq:cutoff log of f_n convergence} follows in a nearly identical manner to \eqref{eq:log of f_n convergence} after noting that the integrand is continuous and bounded for any $u\in\C_-\cup\R\setminus\{0\}$, and this bound is uniform on compact subsets of $\C_-\cup\R\setminus\{0\}$. 
\end{proof}

\begin{proof}[Proof of Lemma \ref{lem:clean lem_unique_saddlepoint}]
	Let $\mu=\mu_{c,\tilde\Sigma}$ be $\boxplus$-ID 
	and set $J_{\mu}(z)=R_{\mu}(\frac{1}{z})+z$, $F_{\mu}(z)=\frac{1}{G_{\mu}(z)}$, and let $v(x)$ be the unique solution to\begin{equation*}
		\int_{\R}\frac{1+t^2}{(t-x)^2+v(x)^2}\dd\tilde{\Sigma}(t)=1.
	\end{equation*} $J_\mu$ and $F_{\mu}$ are then left and right inverses on the appropriate domains. Additionally, $F_{\mu}$ has a one-to-one continuous extension to $\C_{+}\cup\R$ (which follows from \cite[Theorem 4.6]{Belinschi-Bercovici2005}) and maps $\R$ bijectively to $\{x+\ii v(x):x\in\R\}$ (see \cite{Huang2015} or more specifically \cite[Proposition 2.3]{Bercovici-Wang-Zhong2018}). One can also check explicitly from Proposition \ref{rthm:ID rep} that $\Im J_\mu(x+\ii y)=0$ if and only if $x+\ii y\in\{x+\ii v(x):x\in\R\}$. It then follows that\begin{equation}
	F_{\mu}(\C_{+})=\left\{z\in\C_{+} : \Im J_{\mu}(z)>0 \right\},
	\end{equation} and  \begin{equation}
	\left\{u\in\C_{-} :\Im\left( R_{\mu}(u)+\frac{1}{u}\right)>0 \right\}=G_{\mu}(\C_{+}).
	\end{equation} The existence and uniqueness of a solution to \eqref{eq:general R transform version saddle point.} then follows by noting that \eqref{eq:general R transform version saddle point.} is equivalent to $z=R_{\mu}(u)+\frac{1}{u}$.

	If $f_n\notin\LP$, then we note that \begin{equation*}
		z-\frac{1}{u}-R_n(u)=z-\frac{1}{u}-R_{\mu_{c,\Sigma}}(u)+R_{\mu_{c,\Sigma}}(u)-R_n(u),
	\end{equation*} and by Proposition \ref{prop:convergence of R-transforms} \begin{equation}
	\sup_{u\in B}\left|R_{\mu_{c,\Sigma}}(u)-R_{n}(u) \right|\rightarrow0,
	\end{equation}  for any compact subset $B\subset\C_-$. By Rouche's theorem, the saddle point equation \eqref{eq:clean saddle point equation} has the same number of solutions in any compact subset as \eqref{eq:general R transform version saddle point.} for $n$ sufficiently large. Thus, for any $\eps>0$, \eqref{eq:clean saddle point equation} has at most one solution in $\{u:\Im u<-\eps\}$ for sufficiently large $n$, and a solution exists if $\eps>0$ is sufficiently small.
	
	Let $K_n(u)=\frac{1}{u}+R_n(u)$ and $K(u)=\frac{1}{u}+R_{\mu_{c,\Sigma}}(u)$. By the first two parts of Lemma \ref{lem:clean lem_unique_saddlepoint}, there exists open $D\subset\C_-$  such that $K_n$ is injective on $D$ for sufficiently large $n$ and $B\subset K_n(D)$. It then follows from the open mapping theorem and inverse function theorem (see for example \cite[\S 8.1, Proposition 1.1]{Stein--Shakarchi2003}) that $K_n^{-1}=u_n^{*}$ is holomorphic on $K_n(D)$. Again by convergence of $K_n(u_n^*(z))=z$, each convergent subsubsequence of $u_n^*(z)$ must converge to the unique $u^*(z)$ and it follows that $u_n^*\to u^*=K^{-1}$ uniformly on $B$ by Montel's Theorem. 
	
	Let $u^*(z)=G_{\mu_{c,\Sigma}}(z)=1/F_{\mu_{c,\Sigma}}(u)$. Again, if follows from \cite[Theorem 4.6]{Belinschi-Bercovici2005} that if  $\Im F_{\mu_{c,\Sigma}}(x)>0$ for $x\in\R$, then $F_{\mu_{c,\Sigma}}$ can be analytically continued in a neighborhood of $x$. It then follows that $u^*$ can be analytically continued in some neighborhood of $\{z\in\R: -\infty<\Im G_{\mu}(z+\ii0)<0\}$. 
\end{proof}
	
%

\subsection{The proof of Theorem \ref{thm:Appell Planc-Roa asymp}} We will prove Theorem \ref{thm:Appell Planc-Roa asymp} using the contour integral representation given by \eqref{eq:clean saddle point set up} to apply the saddle point method, similar in spirit to \cite{Hofert}. We will first consider the limiting version of the saddle point equation \begin{equation}\label{eq:clean limiting saddle point equation}
	h'(u)=z-R_{\mu_{c,\Sigma}}(u)-\frac{1}{u}=0.
\end{equation}

Let $u^*(z)\in \C_-$ be the unique solution to \eqref{eq:clean limiting saddle point equation} that is strictly separated away from the real line according to Lemma \ref{lem:clean lem_unique_saddlepoint}. In order to apply the saddle point method to \eqref{eq:clean saddle point set up}, we shall first construct a contour $\Gamma$ encircling the origin along which $u^*(z)$ is the unique maximizer of 
\begin{align}
	H(u):=&
	\lim\limits_{n\rightarrow\infty}\Re\left( h_{n}(u)-h_n(-\ii)\right)\label{eq:H}\\
	=&\Re \big((z-c)u\big)-\frac{\sigma^2}{2}\Re(u^2)-\log|u|+\int\log\left|\frac{1-ut}{1+\ii t}\right|+\frac{\Re(u)t}{t^2+1}\dd\nu(t).\nonumber\\
	&-\Re \big((z-c)(-\ii)\big)-\frac{\sigma^2}{2}.\nonumber
\end{align} 
Recall that $\nu$ is the unweighted L\'evy measure, satisfying $\dd\Sigma(x)=\sigma^2\dd\delta_0(x)+\frac{x^2}{1+x^2}\dd\nu(x)$, cf. Remark \ref{rem:weighted_levy}. 
Once we have the limiting contour we will explain how to adjust the contour in $n$ in order to prove Theorem \ref{thm:Appell Planc-Roa asymp}.

\begin{proposition}\label{prop:Contour}
	For any $z\in\C_+\cup \{z\in\R: -\infty<\Im G_{\mu_{c,\Sigma}} (z+\ii0)<0\}$ there exist a simple curve $\Gamma_-\subseteq \C_-$ connecting $\R_-$ to $\R_+$ and passing through $u^*$ such that $H(u^*)$ is the unique maximum of $H(u)$ among $u\in\Gamma_-$.
\end{proposition}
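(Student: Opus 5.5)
We will construct $\Gamma_-$ from the level-set geometry of the harmonic function $H$ on $\C_-$. First we record the basic properties of $H$ there. It is the real part of the holomorphic function $h(u)=(z-c)u-\log u+\int(\dots)\,\dd\Sigma$ (up to constants), with $h'(u)=z-R_{\mu_{c,\Sigma}}(u)-\frac1u$. Hence $H$ is harmonic on $\C_-$. By Lemma \ref{lem:clean lem_unique_saddlepoint}, its only critical point in $\C_-$ is $u^*$, where $h''(u^*)=1/(u^*)'(z)\neq0$ because $G$ is locally injective. So near $u^*$ the set $\{H<H(u^*)\}$ consists of exactly two sectors (valleys), separated by two sectors where $H>H(u^*)$.

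Next we determine the boundary behaviour of $H$. As $u\to0$ in $\C_-$, the term $-\log|u|$ dominates, so $H\to+\infty$. As $u\to\R_{\pm}$ away from $0$, we use the cutoff version \eqref{eq:cutoff log of f_n convergence} together with the fact that the logarithmic potential of $\Sigma$ is bounded above. This shows $H$ extends upper semicontinuously to $\R\setminus\{0\}$, with values possibly $-\infty$ on the support of $\nu$. As $|u|\to\infty$ in $\C_-$, we examine $\Re((z-c)u)-\frac{\sigma^2}{2}\Re(u^2)+\dots$. Since $\Im z\geq0$, we have $\Re(zu)\to-\infty$ along directions where $\Im u\to-\infty$ if $\Im z>0$. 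For $z$ real we instead need the growth of the Lévy integral, which is $o(|u|)$ plus possibly a $-\infty$-type contribution. So $H$ should tend to $-\infty$, or at least stay below $H(u^*)$, near $\infty$ in $\C_-$; securing this is part of the main obstacle.

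The key topological step is to show the two valleys at $u^*$ lie in different components of $\{u\in\C_-:H(u)<H(u^*)\}$, one component reaching $\R_-\cup\{\infty\text{ on the left}\}$ and the other reaching $\R_+$. Take a component $V$ of the sublevel set. By the maximum principle applied to the harmonic $H$ on $V$, $V$ cannot be relatively compact in $\C_-$ with $H=H(u^*)$ on $\partial V$ unless $H$ is constant. So each valley escapes to $\partial\C_-\cup\{\infty\}$, and, because $H\to+\infty$ at $0$, it cannot end at $0$. Suppose both valleys belonged to the same component. Then a closed loop through $u^*$ inside $\{H\le H(u^*)\}$ would enclose one of the superlevel sectors. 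The maximum principle on the enclosed region (on which $H$ is harmonic, as it avoids $0$) would give $H\le H(u^*)$ inside, which contradicts the superlevel sector. Such a loop might pass through the boundary $\R$, so the region is only bounded by $\R$; there we use the upper semicontinuity and the upper bound from \eqref{eq:cutoff log of f_n convergence} to run the argument with $H$ extended by its boundary $\limsup$. To see that the valleys land on opposite sides of $0$, we use that the superlevel sector must connect to $0$, where $H=+\infty$, since otherwise it is a bounded region on which $H>H(u^*)$ with boundary values $\le H(u^*)$. The superlevel sets connecting $u^*$ to $0$ thus separate $\C_-$ into a left and a right part, placing one valley in each.

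Finally, we define $\Gamma_-$ by following a path of strict descent (for instance, a steepest-descent curve of $H$, which is a level curve of $\Im h$) from $u^*$ inside each valley until it hits $\R_-$, respectively $\R_+$. This yields a simple curve on which $H<H(u^*)$ except at $u^*$. If a valley escapes only to $\infty$, we instead close it off along a large ray where $H<H(u^*)$, using the growth established earlier. We expect the main obstacle to be the real-$z$ case: controlling $H$ near $\R$ and at $\infty$ when $\nu$ has heavy tails or atoms, so that the valleys genuinely reach $\R_\pm$ rather than accumulating at $\infty$ or at $0$.
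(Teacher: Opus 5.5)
Your overall plan (valleys of the non-degenerate saddle, the maximum principle to separate them, descent curves to the boundary) is reasonable. However, two steps fail as written, and the second is the heart of the proposition.

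First, your behaviour at infinity has the wrong sign. For $\Im z>0$ and $\Im u\to-\infty$ one has $\Re((z-c)u)=(\Re z-c)\Re u-\Im z\,\Im u\to+\infty$, and the $\sigma^2$-term also tends to $+\infty$ vertically. This is exactly Lemma \ref{lem:H_Monotonicity}: $\partial_y H<-\eps$ for $|u|>R$, so $H$ \emph{increases} as you move down into $\C_-$, and there is a large superlevel region near $-\ii\infty$. Hence ``close it off along a large ray where $H<H(u^*)$'' rests on a false claim. What you should use instead is the vertical monotonicity itself. From any point of a valley with $|u|>R$, the vertical segment up to $\R$ stays in the strict sublevel set. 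So every valley reaches $\R$, and no valley escapes only to $\infty$.

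Second, and more seriously, you have not shown that the two valleys land on \emph{opposite} sides of $0$. The maximum principle alone cannot give this, because nothing bounds $H$ on $\R\setminus\{0\}$ by $H(u^*)$: a hill sector may touch the real axis between two landing points on $\R_+$. Your claim that a hill ``must connect to $0$, since otherwise it is a bounded region with boundary values $\le H(u^*)$'' has three problems:
\begin{itemize}
\item part of the hill's boundary may consist of real points where $H>H(u^*)$;
\item it overlooks the hill that runs to $-\ii\infty$;
\item a single superlevel arc from $u^*$ to $0$ does not separate $\C_-$ into a left and a right part.
\end{itemize}
Upper semicontinuity and the cutoff bound \eqref{eq:cutoff log of f_n convergence} do not repair this.

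The missing ingredient is the boundary statement of Lemma \ref{lem:H_Monotonicity}, $\partial_y H(x-0\ii)=-\Im z\le 0$ for $x\in\R\setminus\{0\}$, combined with Hopf's lemma. Suppose both descent curves ended at $a<b$ in $\R_+$ (or both in $\R_-$). The domain $D\subset\C_-$ bounded by the two curves and $[a,b]$ contains a hill sector, so $\max_{\overline D}H>H(u^*)$. This maximum must be attained at some $x^*\in(a,b)$, and Hopf then forces $\partial_y H(x^*)>0$, a contradiction. This is exactly the step the paper relies on: in its Morse-theoretic phrasing, the two sublevel components merging at $u^*$ must meet $\R$ on opposite sides ``by the above argument using Hopf's Lemma.'' With this step, and with vertical monotonicity replacing your growth claim at infinity, your descent-curve construction would go through.
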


The exact choice of the contour $\Gamma_-$ is irrelevant for the saddle point method and we are going to deduce abstract existence of $\Gamma_-$ using topological arguments from Morse theory. We refer to \cite{milnor} for general background on the theory. We define the sublevel sets
\begin{align*}
	\sub(t):= \{u\in \mathbb C_-\cup \R : H(u)\le  t\},
\end{align*}
where $t\in\R$ can be interpreted as a height parameter. We will see that $\Gamma_-$ is contained in $\sub(t^*)$, where $t^*=H(u^*)$ is the smallest height such that $\sub(t)$ connects the negative real axis to the positive, see Figure \ref{fig:heatmap}. Later, in the course of the proof of Theorem \ref{thm:Appell Planc-Roa asymp}, we are going to complete $\Gamma_-$ by reflection to $\Gamma_+\subseteq\C_+$, and adjust it for finite $n$ close to the finite $n$ saddle points $u_n^*$.

	\begin{figure}[h]
	\centering
	\includegraphics[width=0.5\linewidth]{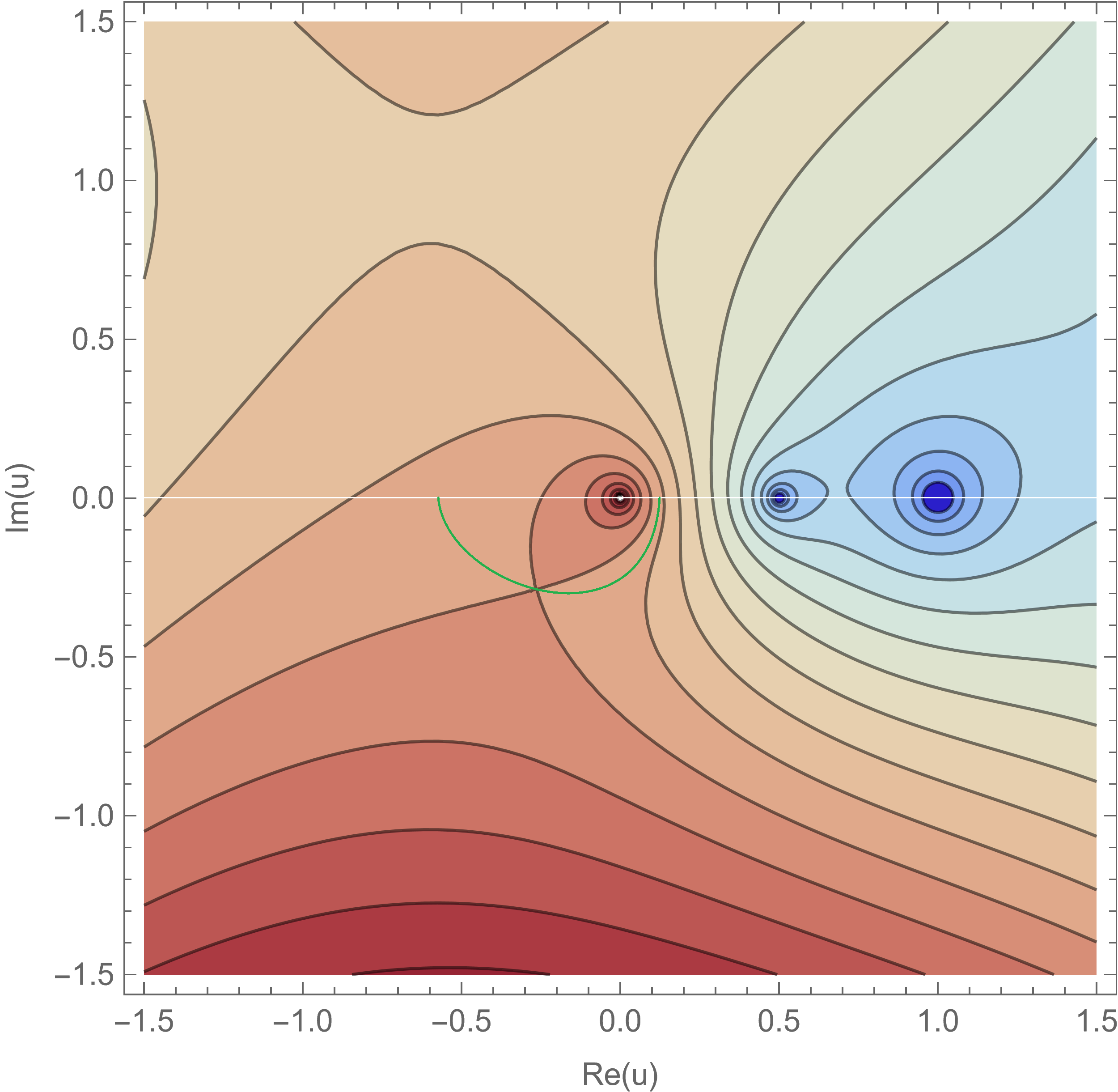}
	\caption{Heatmap of the (sub)level sets of $H(u)$ for $\sigma^2=1$, $z=-1+\ii$, $c=0$, $\nu=\delta_1+\delta_2$, where low values are cold and high values are hot. The unique saddle point $u^*(z)\in\C_-$ is clearly visible such that the green $\Gamma_-$ is contained in $\sub(H(u^*(z)))$ with a unique maximum at $u^*(z)$. Observe also, that there are lower saddle points in $\C_+$ and, close to the singularities on $\R_+$.}
	\label{fig:heatmap}
\end{figure}

We will need that $H$ is monotone decreasing along vertical lines towards $\R$, far away from the origin.
\begin{lemma}\label{lem:H_Monotonicity}
	For any $z\in\C_+$ and $0<\varepsilon<\Im(z)$, there exist $R>0$ such that for all $u\in\C_-$ with $|u|>R$  
	\begin{align*}
		\partial_y H(u)<-\varepsilon,
	\end{align*}
	where $y=\Im(u)$. Moreover, if $z\in\C_+\cup \R$ and $u\in\R\setminus \{0\}$, then $\partial_y H (u-0\ii)=-\Im(z)\le0$. 
\end{lemma}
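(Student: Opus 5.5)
Since $H$ is the real part of a function that is holomorphic away from the real axis, $\partial_y H = -\Im h'(u)$, where $h'(u)=z-R_{\mu_{c,\Sigma}}(u)-\frac1u$ is the limiting saddle point function from \eqref{eq:clean limiting saddle point equation} (up to the constant normalization, which does not affect derivatives). This is just the Cauchy--Riemann equations: for $g$ holomorphic, $\partial_y \Re g = -\Im g'$. Hence
\[
\partial_y H(u) = -\Im z + \Im R_{\mu_{c,\Sigma}}(u) + \Im\frac1u ,
\]
and the plan is to control the last two terms for large $|u|$ in $\C_-$, and separately on the real axis.

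\textbf{Step 1: the term $\Im(1/u)$.} For $u=x+\ii y$ with $y<0$ we have $\Im(1/u) = -y/|u|^2 = |y|/|u|^2\le 1/|u|$. This tends to $0$ uniformly as $|u|\to\infty$.

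\textbf{Step 2: the term $\Im R_{\mu_{c,\Sigma}}(u)$.} By Proposition \ref{rthm:ID rep},
\[
\Im\frac{u+x}{1-xu} = \frac{(1+x^2)\,\Im u}{|1-xu|^2}\le 0 \quad\text{for } u\in\C_- .
\]
So $\Im R_{\mu_{c,\Sigma}}(u)\le 0$ on $\C_-$, the free analogue of the Nevanlinna property, and this term only helps: we do not need it to be small. Combining the two steps,
\[
\partial_y H(u)\le -\Im z + \frac{1}{|u|}.
\]
Choosing $R$ with $1/R<\Im z-\varepsilon$ gives $\partial_y H<-\varepsilon$ for all $|u|>R$, which is the first claim.

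\textbf{Step 3: the boundary values on $\R\setminus\{0\}$.} For $u=x-\ii 0$ with $x\neq 0$ real, $\Im(1/u)\to 0$. The integrand of $\Im R_{\mu_{c,\Sigma}}$ tends to $0$ pointwise for $t\ne 1/x$, so it remains to justify passing the limit $y\uparrow 0$ through the integral. A cleaner route is to differentiate $H$ directly using \eqref{eq:H}:
\[
\partial_y\log|1-ut| = \Re\frac{-\ii t}{1-ut} = \frac{-t^2 y}{|1-ut|^2},
\]
which vanishes at $y=0$ whenever $1-xt\neq 0$. Likewise $\partial_y\bigl(-\frac{\sigma^2}{2}\Re u^2\bigr)=\sigma^2 y=0$ and $\partial_y\log|u|=y/|u|^2=0$ at $y=0$, while the remaining terms of \eqref{eq:H} contribute only $-\Im z$. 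So $\partial_y H(x-\ii0)=-\Im z\le 0$, in the sense of one-sided derivatives computed from below.

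\textbf{Main obstacle.} The only delicate point is Step 3 when $\nu$ charges a neighbourhood of $1/x$, where the integrand $t^2|y|/|1-ut|^2$ behaves like an approximate identity rather than tending to zero. Its contribution has the nonpositive sign. To get equality I would interpret $\partial_y H(u-0\ii)$ as the derivative of the boundary function $y\mapsto H(x+\ii y)$ at $y=0^-$, use the Poisson-kernel form of the integrand, and assume (or verify from absolute continuity) that $\nu$ has no atom at $1/x$. If it does not work out, I would settle for the inequality $\le 0$, which is what the monotonicity argument actually uses.
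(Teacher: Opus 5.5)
Your Steps 1--2 are the paper's argument up to notation. Cauchy--Riemann gives $\partial_yH=-\Im z+\Im R_{\mu_{c,\Sigma}}(u)+\Im(1/u)$. The L\'evy--Khintchine integrand has nonpositive imaginary part on $\C_-$; the paper splits it into $\sigma^2\Im u$ plus a $\nu$-integral, while you keep one $\Sigma$-integral. Finally $|\Im u|/|u|^2\le 1/|u|$. The first claim is fine.

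The gap is Step 3, and it cannot be repaired as you suggest, because the equality itself is false in general.

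\begin{itemize}
\item \emph{Why your differentiation fails.} The $\nu$-part of \eqref{eq:H} is even in $y$. Where $\nu$ has positive density near $1/x$ it has a $|y|$-type corner at $y=0$, so differentiating under the integral sign there is not legitimate.
\item \emph{Why excluding an atom does not help.} Absolute continuity is exactly the case where the Poisson-kernel term survives. With $w=1/u\in\C_+$,
\[
\Im R_{\mu_{c,\Sigma}}(u)=-\Im w\int_{\R}\frac{1+t^2}{|w-t|^2}\,\dd\Sigma(t)\longrightarrow -\pi\,(1+x^{-2})\,\frac{\dd\Sigma}{\dd t}(1/x)\qquad (u=x+\ii y,\ y\uparrow 0),
\]
whenever $\Sigma$ has a continuous density near $1/x$.
\item \emph{Counterexample.} Take the standard Cauchy law of Corollary \ref{cor:Riemann Xi}, with $c=0$ and $\dd\Sigma=\frac{\dd t}{\pi(1+t^2)}$. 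Then $R_{\mu_{c,\Sigma}}\equiv-\ii$ on $\C_-$. So $\partial_yH(u)=-\Im z-1-\frac{\Im u}{|u|^2}$ exactly, and $\partial_yH(x-0\ii)=-\Im z-1$ for every $x\neq 0$.
\end{itemize}

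The paper's one-line justification (``sending $u\to x$'') skips the same point. The correct second claim is an inequality, and it is all that the Hopf-lemma argument in Proposition \ref{prop:Contour} uses.

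Your fallback is therefore the right statement, and you can get it directly from Steps 1--2 with no analysis of $\Sigma$ near $1/x$:
\begin{itemize}
\item Steps 1--2 give $\partial_yH(x+\ii y)\le -\Im z+\frac{|y|}{x^2+y^2}$, hence $\limsup_{y\uparrow 0}\partial_yH(x+\ii y)\le-\Im z\le 0$.
\item By the mean value theorem, the same bound holds for the one-sided difference quotients $\frac{H(x)-H(x+\ii y)}{|y|}$ at any $x$ where $H$ is continuous from below.
\item This contradicts the strictly positive lower bound on these quotients that Hopf's lemma gives at a strict boundary maximum.
\end{itemize}
Replace Step 3 by this argument and state the second claim as $\partial_yH(x-0\ii)\le-\Im z\le 0$.
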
 
\begin{proof}
	For holomorphic $h(u)$ on $\C_-$, it holds $\partial_y\Re h=-2\Im (\partial_u \Re h)=-\Im (\partial_u h)$,
	hence 
	\begin{align*}
		\partial_y H(u)&=-\Im\big(z-c-\sigma^2u-\frac 1 u+\int \frac{-t}{1-ut}+\frac{t}{t^2+1}\dd \nu(t)\big)\\
		&=-\Im z+\sigma^2\Im u-\frac{\Im u}{|u|^2}+\int \frac{t^2\Im u}{|1-ut|^2}\dd \nu(t)<-\Im z-\frac{\Im u}{|u|^2}.
	\end{align*}
	The first claim follows from choosing $|u|>R$ sufficiently large and the second from sending $u\to x\in\R\setminus \{0\}$
\end{proof}

%
%
%

\begin{proof}[Proof of Proposition \ref{prop:Contour}]
	 Observe that $H$ is harmonic in $\C_-$, hence all its critical points are saddle points and by Lemma \ref{lem:clean lem_unique_saddlepoint} there is only one saddle point $u^*$. Moreover, $H$ is continuous up to the real line except for the (positive) singularity at the origin and the (negative) singularity at discontinuity points of $\Sigma$, or $\nu$ respectively.

	Therefore, $\sub(t)$ cannot have bounded holes (or islands) inside $\C_-$ as this would contradict the maximum principle. The same holds up to the boundary $\R\setminus \{0\}$: Assume there would be such a hole, that is a bounded simply connected component of $\sub(t)^c$ which intersects $\R$ in a single interval, then $H|_\R$ must have a local maximum $x^*\in\R\cap \sub(t)^c$. Then, Hopf's Lemma \cite[\S 6.4]{Evans} implies that boundary maxima $H(x^*)$ have strictly positive outer derivative $\partial_y H(x^*)>0$, which contradicts Lemma \ref{lem:H_Monotonicity} unless $x^*=0$.  
	
	Of course, $\sub(t)$ may still enclose a hole around the origin, see Figure \ref{fig:heatmap} for an illustration. In fact, we claim that this enclosure happens precisely at $t^*:=H(u^*)$. 
	
	To verify this, we aim to use the  basic fact from Morse theory that, as $t$ increases, the topology of $\sub (t)$ changes only at critical values of $H$ on $\C_-$, or on the boundary $\partial\C_-=\R$. To formalize this, recall a classical result from Morse theory, sometimes referred to as the \emph{fundamental theorem of Morse theory}, stating, for any smooth function $ h:\mathcal M\to\R$ on a compact manifold $\mathcal M$ without boundary and a non-degenerate critical point $u\in \mathcal M$ of certain index $k$, that sublevel sets $ h^{-1}(-\infty,  h(u)+\varepsilon]$ have the same homotopy type as $ h^{-1}(-\infty,  h(u)-\varepsilon]$ with a $k$-cell attached. See \cite[Theorem 3.2]{milnor} for a proof. In our case, $h:=H$ is harmonic and its critical point $u=u^*$ is a non-degenerate saddle point, thereby having index $k=1$. Hence, attaching $1$-cells translates to merging connected components of $\sub(t)$ at heights $t=H(u^*)$, since attaching both ends of the $1$-cell to the same component creates a loop that is forbidden by the above harmonicity. In order to compactify the domain $\C_-$ of $H$ and to remove singularities on $\R$, we choose $R>0$ from Lemma \ref{lem:H_Monotonicity} and set
	\begin{align*}
		\mathcal M:= \{u\in\C: |u|\le R, \Im(u)\le -\varepsilon\}
	\end{align*} 
	for some $\varepsilon>0$ to be defined below.
	Note the corners near $\pm R-i\varepsilon$ do not affect the argument, and could be smoothed out at scale $\varepsilon$ by redefining $\mathcal M$. Assume $u^*$ lies in the interior of $\mathcal M$, which can be achieved by possibly enlarging $R$. Let $t_0\ll 0$ sufficiently small such that $\sub(t_0)\cap \mathcal M=\emptyset$.  Recall that $H$ is bounded from above on compact subsets away from the origin, and thus we can take $t_1\gg 0$ such that $\sub(t_1)\cap\{u\in\C: \frac{\eps}{2}\le |u|\le R, \Im(u)\leq0\}=\{u\in\C: \frac{\eps}{2}\le |u|\le R, \Im(u)\leq0\}\supset\mathcal M$. The global sublevel set $\sub(t_0)$ does not connect $\R_-$ to $\R_+$, and by Lemma \ref{lem:H_Monotonicity} all connected components of $\sub(t_0)$ intersect the real line $\R$.  By definition,  $\sub(t_1)$ connects $\R_-$ to $\R_+$.
	
	At some $t\in(t_0,t_1)$, this connection within the global sublevel set $\sub(t)$ will be created, but not via the origin, since $0\notin \sub(t)$ for all $t\in\R$. As $t>t_0$ increases, the homotopy type of $\sub(t)\cap \mathcal M$ may also change for heights $t$ that are critical values at the boundary $\partial \mathcal M$ (depending on its normal derivative), as is discussed in \cite[\S 1]{laudenbach}, \cite[p147]{braess} and \cite[\S 3]{Jan}. 
	There are the following possibilities for the connected components of $\sub(t)\cap \mathcal M$:
	\begin{enumerate}
		\item A new component forms on $\partial \mathcal M$, e.g.~at a minimum of $H|_{\R-i\varepsilon}$ on $\R-i\varepsilon$.
		\item Two components merge on $\partial \mathcal M$, e.g.~at a maximum of $H|_{\R-i\varepsilon}$ on $\R-i\varepsilon$.
		\item Components merge at a saddle point in the interior of $ \mathcal M$. 
	\end{enumerate}
	First note that any newly formed or merged component of $\sub(t)\cap \mathcal M$ is still part of a component of the global sublevel set $\sub(t)$, which intersects $\R$. 
	Obviously, (1) cannot introduce a connection from $\R_-$ to $\R_+$ within $\sub(t)$. Recall that there is a unique critical point $u^*\in\mathcal M\subseteq\C_-$, at which two components merge, by (3). However, these components of $\sub(t)$ for $t<t^*$ must necessarily intersect $\R$ on opposite sides, by the above argument using Hopf's Lemma. Thus, at the height $t^*=H(u^*)$, the newly formed connected component of $\sub(t^*)$ connects $\R_- $ to $\R_+ $. In particular, there is a curve $\Gamma_-\subseteq \sub(t^*)$ through $u^*$ as claimed. It can also be argued that the boundary effects (2) do not cause more connections, but we shall not need it.		
	\end{proof}
	
	We now complete the proof of Theorem \ref{thm:Appell Planc-Roa asymp}.
	\begin{proof}[Proof of Theorem \ref{thm:Appell Planc-Roa asymp}]
	The proof of Theorem \ref{thm:Appell Planc-Roa asymp} will proceed using the saddle point method. We follow the approach of \cite{OSullivan2019} which provides a detailed proof of Perron's \cite{Perron17} original argument to compute asymptotic expansions for integrals of the form \begin{equation}
		\int_{\mathcal{C}} e^{np(u)}q(u)\dd u,
	\end{equation}  where $p(u)$ and $q(u)$ are holomorphic on a neighborhood of $\mathcal{C}$, $\Re p(u)$ is maximized at a point $u^*$ of $\mathcal{C}$, and $p,q$ and $\mathcal{C}$ are independent of $n$. There are four obstacles preventing us from simply applying \cite[Corollary 1.4]{OSullivan2019} to \eqref{eq:clean saddle point set up} to complete the proof:
	 \begin{enumerate}
	\item $p=h_n$ and our eventual choice of contour $\mathcal C=\Gamma_n$ depend on $n$, as we will see below.
	\item There are infinitely many saddle points \eqref{eq:clean saddle point equation}, which potentially could be a maximizer (out of which only $u_n^*$ from Lemma \ref{lem:clean lem_unique_saddlepoint} will be dominant as indicated by Proposition \ref{prop:Contour}).
	\item  It is not obvious we can choose our contour $\Gamma$ to maximize $\Re h_n(u)$ at our distinguished saddle point $u_n^*$.
	\item $h_n$ is not holomorphic on any fixed neighborhood of a curve encircling $0$, due to the singularities near $\R$.
	\end{enumerate}
	 These points require some adjustment, but as we will see the uniform convergence of Proportion \ref{prop:convergence of R-transforms} and the fact that we are only interested in the leading order of the asymptotic expansion makes all four manageable. We will provide the outline of \cite{OSullivan2019} and the necessary details to adjust to our application.

	First, we prove that a contour $\Gamma_{n}$ exists such that $\Re h_n(u)< \Re h_{n}(u_n^*)$ for all $u\in\Gamma_{n}\setminus\{u_n^*\}$. Let 
	$$H_n(u):=\Re\left( h_{n}(u)-h_n(-\ii)\right)$$
	with sublevel sets
	$\sub_n(t)=\{u\in\C_{-}\cup \R: H_n(u)\le t\}$ and height $t_n^*=H_{n}(u_n^*)$ converging to $t^*=H(u^*)$. We first claim that \begin{equation}\label{eq:Gamma_{-} in the sublevel set in n}
		\Gamma_-\cap B_{\eps}(u^*)^c\subset \sub_n(t_n^*-\eps''),
	\end{equation} for some $\varepsilon,\eps''>0$ small and $n$ sufficiently large.
	 To prove this, let $h(z)=\lim\limits_{n\rightarrow\infty} h_n(u)-h_n(-\ii)$ be the limit guaranteed by Proposition \ref{prop:convergence of R-transforms} and note that $h''(u^*)\neq 0$. Thus, the Hessian of $H$ at $u^*$ has a strictly negative eigenvalue, and we may take  $\eps,\eps'>0$ to be some small constants such that the curve $\Gamma_-$ from Proposition \ref{prop:Contour} can be chosen such that $\Gamma_-\cap B_{\eps}(u^*)^{c}\subset\sub(t^*-\eps')$.   Let $I=\R\cap\sub(t^*-\eps')$. If $I$ has an isolated point $x_0$, then by Hopf's Lemma \cite[\S 6.4]{Evans} the outer derivative  $\partial_yH(x_0)$ is strictly positive, a contradiction of Lemma \ref{lem:H_Monotonicity}. Thus, by Proposition \ref{prop:Contour} $I$ contains intervals of positive and negative numbers. Next, define \begin{equation}
			H_{M,\infty}(u)=\lim\limits_{n\rightarrow\infty}\Re \left( (u-\ii)z-\log u+\log\ii\right)+\frac{1}{n}\log_{M}| f_{n}(nu)|-\frac{1}{n}\log_{M}|f_{n}(-n\ii)|,
		\end{equation} 
		where we recall $\log_{M}|z|:=\max(\log|z|,-M)$ for $M\in\R$. Note, that both $H_{M,\infty}$ and $H$ are the uniform limits of subharmonic functions, and hence subharmonic themselves by the maximum principle. $\{H_{M,\infty}\}_{M\geq 0}$ is a decreasing sequence of functions. It then follows from the monotone convergence theorem that $H_{M,\infty}\rightarrow H$ point-wise as $M\rightarrow\infty$. Moreover, this convergence is uniform on any compact subset of $\C_-\cup\R\setminus\{0\}$ where $H$ is bounded from below. Let 
		\[\sub_{M}(t):=\{u\in\C_-: H_{M,\infty}(u)\leq t\}. \]
		We take $M$ sufficiently large such that $\Gamma_{-}\cap B_{\eps}(u^*)^{c}\subset \sub_{M}(t^*-\frac{\eps'}{2})$ and such that $\sub_{M}(t^*-\frac{\eps'}{2})\cap\R$ still contains both a positive and negative interval.
		
		 Let $H_{n}(u)=\Re \left(h_{n}(u)-h_n(\ii)\right)$, and let $H_{M,n}$ be the analogous cut-off version. From Proposition \ref{prop:convergence of R-transforms} part (2) we have that $H_n\rightarrow H$ uniformly on compact subsets of $\C_{-}$, and from part (3) that $H_{M,n}\rightarrow H_{M,\infty}$ uniformly on compact subsets of $\C_-\cup\R\setminus\{0\}$.  From Lemma \ref{lem:clean lem_unique_saddlepoint}  the solution $u_n^*(z)$ to \eqref{eq:clean saddle point equation} converges uniformly to $u^*(z)=G_{\mu_{c,\Sigma}}(z)$.  
		
Let $\sub_{M,n}(t)=\{u\in\C_-\cup\R: H_{M,n}(u)\leq t\}$ be the sublevel sets of $H_{M,n}$. Let $B_{r,R}=\{u\in\C_-\cup\R: r\leq |u|\leq R\}$ be such that $\Gamma_{-}\cap B_{\eps}(u^*)^{c}\subseteq B_{r,R}$. For $n$ sufficiently large,\begin{equation}
	\sup_{u\in B_{r,R}}\left|H_{M,n}(u)-H_{M,\infty }(u) \right|<\frac{\eps'}{4}.
\end{equation} Then, for $u\in\Gamma_{-}\cap B_{\eps}(u^*)^{c}$\begin{equation}
H_{M,n}(u)\leq H_{M,\infty }(u)+\left|H_{M,n}(u)-H_{M,\infty }(u) \right|\leq t^*-\frac{\eps'}{4}.
\end{equation}We then have that $\Gamma_{-}\cap B_{\eps}(u^*)^{c}\subset \sub_{M,n}(t^*-\frac{\eps'}{4})\subset \sub_{n}(t^*-\frac{\eps'}{4})$. It follows from Proposition \ref{prop:convergence of R-transforms} and Lemma \ref{lem:clean lem_unique_saddlepoint} that $t_{n}^*=H_{n}(u_n^*)\to t^*$, which implies the first claim \eqref{eq:Gamma_{-} in the sublevel set in n}. For the remainder of the proof we assume that $n$ is large enough that \eqref{eq:Gamma_{-} in the sublevel set in n} holds.

		Following \cite[\S 2]{OSullivan2019} one can compute explicitly, either from the Hessian of $H_n$ or a Taylor expansion of $P_n(r,\theta)=-\frac{\Re(h_n(u_n^*+re^{\ii\theta})-h_{n}(u_n^*) )}{r^2}$, that the level curves $\{u:H_n(u)=H_{n}(u_n^*)\}$ near $u_n^*$ are tangent to the null directions of the Hessian, equivalently to the solutions of $P_n(0,\theta)=0$, given by \begin{equation}
			\theta_{\ell}(0)=-\frac{1}{2}\left(\arg\left(-h_n''(u_n^*)\right)+\frac{(2\ell+1)\pi}{2} \right).
		\end{equation}  Let $\phi^2=\frac{{(u_n^*)'(z)}}{\left| (u_n^*)'(z)\right|}$ and we note that $h_n''(u_n^*)=-\frac{1}{(u_n^*)'(z)}$, since $u_n^*$ is the inverse of $\frac 1 u +R_n(u)$. It follows from \cite[Proposition 2.1]{OSullivan2019} that there exists $\tilde{\eps}_n>0$ such that $u=u_{n}^{*}(z)+\frac{\phi w}{\sqrt{n}}\in\sub_{n}(t_n^*)$ for all $w\in\R$ such that $|u-u_{n}^*|<\tilde\eps_n$, where $\tilde{\eps}_n$ depends only on a finite number of derivatives of $H_n$. Moreover, since $H_{n}$ converges uniformly to $H$, there exists $\tilde{\eps}>0$ independent of $n$ such that $u=u_{n}^{*}(z)+\frac{\phi w}{\sqrt{n}}\in\sub_{n}(t_n^*)$ for all $w\in\R$ such that $|u-u_{n}^*|<\tilde\eps$ for $n$ sufficiently large.

		 We can adjust the contour $\Gamma_{-}$ inside $B_{\eps}(u^*)$ to get a new contour $\Gamma_{-,n}$ such that $u_{n}^*\in \Gamma_{-,n}\subset\sub_n(t_n^*)$. In particular, we deform the part of contour $\Gamma_{-,n}$ inside  $B_{\tilde{\eps}}(u_n^*)\subseteq B_{ {\eps}}(u^*)$ such that $u=u_{n}^{*}(z)+\frac{\phi w}{\sqrt{n}}$ for some $w\in\R$ for $u\in\Gamma_{-,n}\cap B_{\tilde\varepsilon}(u_n^*)$. Finally, we connect these two adjusted pieces along the circle $|u-u_{n}^*|=\tilde{\eps}$. Let $\Gamma_{+,n}$ be the reflection of $\Gamma_{-,n}$ across the real line, and define $\Gamma_{n}:=\Gamma_{-,n}\cup\Gamma_{+,n}$. Observe that for any $u\in\C_{-}$ \begin{equation}
			H_n(\overline{u})\leq H_{n}(u),
		\end{equation} for any $z\in\C_+\cup\R$ with equality if and only if $\Im(z)=0$. 
		
		Let $\Gamma_{-,n,1}=\{u\in \Gamma_{-,n}: |u-u_n^*|< \tilde{\eps}/2\}=\Gamma_{-,n}\cap B_{\tilde\varepsilon/2}(u_n^*)$ and $\Gamma_{-,n,2}=\Gamma_{-,n}\setminus\Gamma_{-,n,1}$. We define $\Gamma_{+,n,1}$ and $\Gamma_{+,n,2}$ as the reflections of these sets across the real line. We take $\Gamma_{n}$ to be the contour in \eqref{eq:clean saddle point set up} and divide the integral into four pieces \begin{equation}
		\begin{aligned}
			\oint_{\Gamma_{n}}\exp\left(n h_n(u) \right)\frac{\dd u}{u^{d+1}}&=\sum_{j\in\{+,-\},k=1,2}\int_{\Gamma_{j,n,k}}\exp\left(n h_n(u)\right)\frac{\dd u}{u^{d+1}}.
		\end{aligned}
		\end{equation} Again, we note that while $h_{n}$ is only piece-wise holomorphic, it holds that $\Re h_n$ is upper-semicontinous, $h_n$ is holomorphic in a neighborhood of $\Gamma_{-,n,1}$ and \begin{equation*}
		\left|e^{nh_{n}(u)}\right|\leq \left|e^{nh_{n}(u_{n}^*)}\right|,
		\end{equation*} for all $u\in\Gamma_{-,n}$, with equality only if $u=u_n^*$.
		
		We first consider the contribution from $\Gamma_{-,n,1}$. Let $\psi_n$ be the holomorphic function such that $h_n(u)=h_{n}(u_n^*)+\frac{1}{2}h_n''(u_n^*)(u-u_n^*)^2(1-\psi_n(u))$ on $B_{\eps}(u^*)$ with $\psi_n(u_n^*)=0$. By Proposition \ref{prop:convergence of R-transforms}, $\psi_n$ converges uniformly to an analogous $\psi$ with $h_n$ replaced by $h$. Hence $\sup_{u\in B_{\eps}(u^*)}|\psi_n(u)|\leq C_{\psi}$ for some $C_{\psi}>0$. This uniform bound is what allows us to adjust \cite{OSullivan2019} to $h_n$ depending on $n$.  By the integral version of Taylor's theorem \cite[Chapter 3.1, Theorem 8]{ahlfors} with $w=\sqrt{N}\phi^{-1}|(u_{n}^*)'(z)|^{-1/2}(u-u_n^*)$ such that $n(h_n(u)-h_n(u_n^*))=-\frac{w^2}{2}(1-\psi_n(u))$ we have for $u\in\Gamma_{-,n,1}$ that \begin{equation}\label{eq:uniform Taylor bound}
			\begin{aligned}
				e^{n(h_n(u) -h_{n}(u_n^*))}\frac{1}{u^{d+1}}&=e^{-w^2/2}e^{w^2\psi_n(u)/2}\frac{1}{u^{d+1}}\\
				&=e^{-w^2/2}\left(\frac{1}{(u_n^*(z))^{d+1}}+ \frac{(u-u_{n}^*)}{2\pi\ii}\int_{|v-u_n^*|=\tilde{\eps}}\frac{e^{w^2\psi_n(v)/2}}{(v-u_n^*)(v-u)}\frac{1}{v^{d+1}}\dd v \right)\\
				&=e^{-w^2/2}\left(\frac{1}{(u_n^*(z))^{d+1}}+ O\left(\frac{e^{C_{\psi}w^2\tilde\eps} }{\tilde{\eps}^2}|u-u_n^*|\right)\right)\\
				&=e^{-w^2/2}\frac{1}{(u_n^*(z))^{d+1}}+O(e^{-w^2/4}|u-u_n^*|),
			\end{aligned} 
		\end{equation} if $\tilde{\eps}$ is sufficiently small, but independent of $n$. Making the change of variables \begin{equation}
		u=u_n^*(z)+\frac{\phi w}{\sqrt{n}}, 
		\end{equation}
		  it follows from \eqref{eq:uniform Taylor bound} that for some $0<c<1/(4|(u_n^*)'(z)|)$, which can be chosen uniformly in $n$ by Proposition \ref{prop:convergence of R-transforms}, that
		  \begin{equation}\label{eq:Gamma_1 change of variables}
		\begin{aligned}
			\int_{\Gamma_{-,n,1}}\exp(nh_{n}(u))\frac{\dd u}{u^{d+1}}&=e^{nh_{n}(u_n^*)}\frac{\phi}{\sqrt{n}}\int_{\Gamma_{-,n,1}'}\frac{\exp\left(-\phi^2\frac{1}{(u_n^*)'\left(z\right)}\frac{w^2}{2}\right)+O(e^{-cw^2})n^{-1/2}}{\left(u_n^*(z) \right)^{d+1}} {\dd w}\\
			&=e^{nh_{n}(u_n^*)}\frac{\phi}{\sqrt{n}}\int_{\Gamma_{-,n,1}'}\frac{\exp\left(-\frac{w^2}{2|(u_n^*)'\left(z\right)|}\right)+O(e^{-cw^2})n^{-1/2}}{\left(u_n^*(z)\right)^{d+1}} \dd w, 	
		\end{aligned}
		\end{equation}where $\Gamma_{-,n,1}'$ is the image of $\Gamma_{-,n,1}$ in the $w$ plane, and recall for the choice of $\Gamma_{-,n}$ that $\Gamma_{-,n,1}'$ is a real interval $[-\sqrt{n}\tilde\eps/2,\sqrt{n}\tilde{\eps}/2]$. We note that again by Lemma  \ref{lem:clean lem_unique_saddlepoint} that $(u_n^*)'(z)$ converges to $G_{\mu_{c,\Sigma}}'(z)$ which is bounded away from $0$ for both $z\in\C_{-}$ and $z\in\R$ such that $-\infty<\Im G_{\mu_{c,\Sigma}}(z+\ii0)<0$, which follows from (for example) \cite[Lemma 2.6]{Bercovici-Wang-Zhong2018}. Hence, \begin{equation*}
		\sup_{w\in[-\sqrt{n}\tilde{\eps}/2,\sqrt{n}\tilde{\eps}/2]}\left|\frac{\exp\left(-\frac{w^2}{2|(u_n^*)'\left(z\right)|}  \right)+O(e^{-cw^2})n^{-1/2}}{\left(u_n^*(z)\right)^{d+1}}  \right|\leq C\left| \frac{\exp\left(-\frac{w^2}{2|(u^*)'(z)|} \right)}{(u^*(z))^{d+1}}\right|,
		\end{equation*} for some constant $C>0$. We may then use the dominated convergence theorem to conclude that \begin{equation}\label{eq:main contour contribution}
	\begin{aligned}
			\int_{\Gamma_{-,n,1}}\exp(nh_{n}(u))\frac{\dd u}{u^{d+1}}&=e^{nh_{n}(u_n^*)}\frac{\phi}{\sqrt{n}}\int_{\Gamma_{-,n,1}'}\frac{\exp\left(-\frac{w^2}{2|(u_n^*)'\left(z\right)|} \right)+O(e^{-cw^2})n^{-1/2}}{\left(u_n^*(z)\right)^{d+1}} \dd w\\
			&=e^{nh_{n}(u_n^*)}\frac{\phi}{(u_n^*(z))^{d+1}\sqrt{n}}(1+o(1))\int_{-\infty}^{\infty}\exp\left(-\frac{w^2}{2|(u^*)'(z)|} \right)\dd w \\
			&=\sqrt{-\frac{2\pi (u^*)'(z)}{n}}\frac{f_{n}\left(nu_n^*(z) \right)\exp\left(nzu_n^*(z) \right)}{u_n^*(z)^{n+d+1}}(1+o(1)).
	\end{aligned}
		\end{equation} 
		\smallskip
		
		We next show the contribution from $\Gamma_{-,n,2}$, and hence also $\Gamma_{+,n,2}$, is negligible. Since $\Gamma_{-,n,2}$ is fixed away from the saddle point and $H_{n}$ converges uniformly as in \eqref{eq:H}, we have $\Re h_{n}(u)-\Re h_{n}(u_n^*)<-c$ for some $c>0$ and all $u\in\Gamma_{-,n,2}$ by Proposition \ref{prop:Contour}. Thus, \begin{equation}
		\begin{aligned}
			\left|\int_{\Gamma_{-,n,2}}\exp(nh_{n}(u))\frac{\dd u}{u^{d+1}}\right|&\leq \int_{\Gamma_{-,n,2}}\exp(n\Re h_n(u))\frac{\dd u}{|u|^{d+1}}\\
			&<\exp({n\Re(h_n(u_n^*))})\int_{\Gamma_{-,n,2}}\exp(-c n) 
			\frac{\dd u}{|u|^{d+1}}\\
			&=o\left(\left|\int_{\Gamma_{-,n,1}}\exp(nh_{n}(u))\frac{\dd u}{u^{d+1}}\right| \right),
		\end{aligned}
		\end{equation}where we used that the length of $\Gamma_{-,n}$ is bounded in $n$.
		
		 For the contribution of $\Gamma_{+,n,1}$ we have two cases: First, if $\Im z>0$, then a similar argument shows that the contribution is negligible, as $\Re h_{n}(\overline{u_n^*})-\Re h_{n}(u_n^*)=\Im z\Im u_n^*<0$. In conclusion, using the contour integral representation \eqref{eq:clean saddle point set up} of Appell polynomials, we obtain the claim
		\begin{align*}
				A_{n+d,f_n}(z) &=\frac{(n+d)!}{2\pi\ii n^{n+d}}\oint_{\Gamma_n}\exp\left(nh_n(u)\right)\frac{\dd u}{u^{d+1}}\\
				&=\frac{(n+d)!}{n^{n+d}}\sqrt{\frac{(u^*)'(z) }{2\pi n}}\frac{f_{n}\left(nu_n^*(z) \right)\exp\left(nzu_n^*(z) \right)}{u_n^*(z)^{n+d+1}}(1+o(1)).
		\end{align*}
	
		Second, if $z\in\R$, then $H_n(\bar u)=\overline{H_n(u)}$ and hence, we have two dominant saddle points $u_n^*(z)$ and $\overline{u_n^*(z)}$ along $\Gamma_n$. Additionally, the direction of travel along $\Gamma_n$ at $\overline{u_n^*(z)}$ is the negative conjugate of the direction of travel at $u_n^*(z)$. Thus, \begin{equation}
			\int_{\Gamma_{-,n,1}}\exp(nh_{n}(u))\frac{\dd u}{u^{d+1}}=-\overline{\int_{\Gamma_{+,n,1}}\exp(nh_{n}(u))\frac{\dd u}{u^{d+1}} }, 
		\end{equation} and \begin{equation}
		\frac{(n+d)!}{2\pi\ii n^{n+d}}\int_{\Gamma_{-,n,1}}\exp(nh_{n}(u))\frac{\dd u}{u^{d+1}}=	\overline{\frac{(n+d)!}{2\pi\ii n^{n+d}}\int_{\Gamma_{+,n,1}}\exp(nh_{n}(u))\frac{\dd u}{u^{d+1}} }.
		\end{equation}  
		Hence the contributions of integrals around the two  dominating conjugate saddle points to $A_{n,f_n}(z)$ are conjugates of each other. This completes the proof.
	\end{proof}

	\section{Proofs of the corollaries}\label{sec:cors} In this section we prove the various corollaries of Theorems \ref{thm:main result, general} and \ref{thm:Appell Planc-Roa asymp}. 
	
		\subsection{Proof of Corollary \ref{cor:fixed f and stable laws} } Fix some function $f\in\LP_M$ for some $M\geq0$. 
		
	If $\sigma^2>0$, let $f_{p}(z)=e^{\frac{\sigma^2}{2}z^2}f(z)$ be the non-Gaussian component of $f$. Then $f_p$ is either a function of order less than $2$ or of order $2$ and type $0$.  Define the finite measure \begin{equation}
		\Sigma_{f_p}=\sum_{j=1}^{\infty}|{\alpha_{j}^2}|\delta_{\alpha_{j}},
	\end{equation} which has no mass at $\{0\}$.  Since $f_n(z)=f(z/\sqrt n)$ we have $\alpha_{j,n}=\alpha_j/\sqrt n$, hence for any bounded  $B\subset\C$ \begin{equation}
	\begin{aligned}
		\left|\Sigma_{n}(B)-\sigma^2\delta_{0}(B)\right|&=\left|\frac{1}{n}\sum_{j=1}^\infty\frac{(\sqrt{n}\alpha_{j})^2}{1+(\sqrt{n}\alpha_{j})^2}\delta_{\alpha_{j}}(B/\sqrt{n})\right|\\
			&\leq \Sigma_{f_p}(B/\sqrt{n}) \to 0,
	\end{aligned}
	\end{equation} by continuity from above of the finite measure $\Sigma_{f_p}$.
	 Additionally, $c_n=O(1/\sqrt{n})$ is trivial. Therefore, Assumption \ref{assump:main assumption} is satisfied with $c=0$ and $\Sigma=\sigma^2\delta_0$, corresponding to the semicircle law. 
	
	If $\sigma^2=0$, let \begin{equation}\label{eq:f_n expanded}
		f_n(z)=e^{-a_nz}f\left(\frac{b_n}{n}z\right)=Ce^{-c\frac{b_n}{n}z-\left(E_n-c\frac{b_n}{n}+\frac{1}{n}\sum_{j=1}^{\infty}\frac{(b_n\alpha_{j})^3}{1+(b_n\alpha_{j})^2}\right) z}\prod_{j=1}^{\infty}\left(1-\alpha_{j}\frac{b_nz}{n}\right)e^{\alpha_{j}\frac{b_n}{n}z},
	\end{equation}  for $f$, $b_n$ and $a_n$ as in \eqref{eq:func def}, \eqref{eq:b_n def} and \eqref{eq:a_n}, respectively.  Then, \begin{equation}\label{eq:a_n expanded}
	\lim\limits_{n\rightarrow\infty} c\frac{b_n}{n}+\left(E_n-c\frac{b_n}{n}+\frac{1}{n}\sum_{j=1}^{\infty}\frac{(b_n\alpha_{j})^3}{1+(b_n\alpha_{j})^2}\right)-\frac{1}{n}\sum_{j=1}^{\infty}\frac{(b_n\alpha_{j})^3}{1+(b_n\alpha_{j})^2}=E,
	\end{equation} which gives the limit in \eqref{eq:c assumptions} of Assumption \ref{assump:main assumption}. 
		
	 It follows from standard properties of regularly varying functions \cite[Proposition 0.8 (v)]{Resnick2007} that $b_n=h(n)n^{1/\alpha}$ for some slowly varying function $h$. The following computation, which proves convergence of $\Sigma_{n}$ to $\Sigma_{\alpha,\theta}$ when $f\in\LP$, will be used in the general case when $f\in\LP_{M}$. Let $f\in\LP$, $r_j=\alpha_{j}^{-1}$ denote the roots of $f$, and $\Lambda_f=\sum_{j}\delta_{r_j}$. It follows from integration by parts and our assumptions that for any $y>0$ \begin{equation}\label{eq:stable measure conv real case}
	 	\begin{aligned}
	 		\Sigma_n([0,y])&=\frac{1}{n}\sum_{j=1}^{\infty}\frac{(b_n\alpha_{j})^2}{1+(b_n\alpha_{j})^2}\delta_{b_n\alpha_{j}}([0,y])\\
	 		&=\frac{1}{n}\sum_{j=1}^{\infty}\frac{b_n^2}{r_j^2+b_n^2}\delta_{r_j}([b_n/y,\infty))\\
	 		&=\frac{1}{n}\int_{b_n/y}^{\infty}\frac{1}{(t/b_n)^2+1 }\dd\Lambda_f(t)\\
	 		&=-\frac{1}{n}\frac{n_{f,+}(b_n/y)}{(1/y)^2+1}+\frac{1}{n}\int_{b_n/y}^{\infty}\frac{2t/b_n^2}{[(t/b_n)^2+1 ]^2}n_{f,+}(t)\dd t\\
	 		&=-\frac{1}{n}\frac{n_{f,+}(b_n/y)}{(1/y)^2+1}+\frac{1}{n}\int_{1/y}^{\infty}\frac{2s}{[s^2+1 ]^2}n_{f,+}(b_ns)\dd s\\
	 		&\sim -\frac{\theta}{y^\alpha}\frac{1}{(1/y)^2+1}+\int_{1/y}^{\infty}\theta\frac{2s^{\alpha+1}}{[s^2+1]^2}\dd s\\
	 		&=\int_{0}^{y}\frac{x^{1-\alpha }}{x^2+1}\theta\alpha \dd x,
	 	\end{aligned}
	 \end{equation} where in the second to last line, we use \eqref{eq:theta def} and the definition of regularly variation, and the final line is another integration by parts and change of variables.  The last line gives \eqref{eq:stable_levy_measure} as claimed, and an identical computation gives the limits of $\Sigma_{n}([-y,0])$, completing the proof for $f_n\in\LP$. 
	 
	 We now show the general case $f\in\LP_{M}$. For any $B\subseteq\C$ we let $\Re(B)$ be the projection to $\R$, i.e.\ $x\in\Re(B)$ if and only if $\{x\}\times\ii\R\cap B\neq \emptyset$. We decompose $\Sigma_{n}$ as two finite signed measures \begin{equation*}
	 	\Sigma_{n}=\Sigma_{n,\R}+\ii\Sigma_{n,\ii\R}.
	 \end{equation*} We next note for any measurable $B$ the total variation of $\Sigma_{n,\ii\R}$ for large $n$ can be bounded by  \begin{equation}
	\begin{aligned}
			 |\Sigma_{n,\ii\R}|(B)&\le \frac{1}{n}\sum_{j=1}^{\infty}\left|\Im\left(\frac{b_n^2}{r_j^2+b_n^2}\right)\right|\delta_{b_n\alpha_{j}}(B)\\
			 &\leq \frac{1}{n}\sum_{j=1}^{\infty}\frac{2|\Re r_{j}|M}{b_{n}^2\left(1+\left[\frac{\Re r_j}{b_n}\right]^2\right)^2 }\delta_{b_n\Re\alpha_{j}}(\Re B ),
	\end{aligned}
	 \end{equation} where the weight is bounded from above and changing from $\delta_{b_n\alpha_{j}}(B)$ to $\delta_{b_n\Re\alpha_{j}}(\Re B )$ over counts the number of $b_n\alpha_{j}$ in the set. An argument with steps identical to \eqref{eq:stable measure conv real case} (using integration by parts and regular variation) shows \begin{equation}
	  \frac{1}{n}\sum_{j=1}^{\infty}\frac{2|\Re r_{j}|M}{b_{n}^2\left(1+\left[\frac{\Re r_j}{b_n}\right]^2\right)^2 }\delta_{b_n\Re\alpha_{j}}\rightarrow0,
	 \end{equation} where we use $0$ to denote the zero measure. Thus, the total variation of $\Sigma_{n,\ii\R}$ converges vaguely to the zero measure, as does $\Sigma_{n,\ii\R}$. 
	 
	 For $\Sigma_{n,\R}$, an identical argument can be used to upper bound  the total variation by \begin{equation}\label{eq:bound on real sigma measure}
	 	\begin{aligned}
	 		|\Sigma_{n,\R}|(B)&= \frac{1}{n}\sum_{j=1}^{\infty}\left|\Re\left(\frac{b_n^2}{r_j^2+b_n^2}\right)\right|\delta_{b_n\alpha_{j}}(B)\\
	 		&\leq \frac{1}{n}\sum_{j=1}^{\infty}\frac{b_n^2}{(\Re r_j)^2+b_n^2}\delta_{b_n\Re \alpha_j}\left(\Re B\right),
	 	\end{aligned}
	 \end{equation} for $n$ sufficiently large so that $b_n>M$. As shown in \eqref{eq:stable measure conv real case} the measure on the right hand side of \eqref{eq:bound on real sigma measure} converges to finite positive measure with no mass at $0$. Thus, \begin{equation}\label{eq:near 0 Sigma bound}
	 |\Sigma_{n}|(B_{\eps}(0))=O_{\eps}\left(\eps^{2-\alpha}\right),
	 \end{equation} where $O_{\eps}$ is the asymptotic notation as $\eps\rightarrow0^{+}$ and the bound can be taken uniformly in $n$. 
	 
	 	We next consider $\Sigma_n$ on sets bounded away from $0$, where we drop the weight function and invert the support to consider the root counting measures\begin{equation}
	 			\Lambda_{f,n}=\frac{1}{n}\sum_{j=1}^{\infty}\delta_{r_{j}/b_n},
	 		\end{equation} on bounded sets. Recall $n_{f,+}(x)= |\{\alpha_{j}^{-1}=r_j \in (0,x)\times\ii[-M,M]\}|$, hence for any $x>0$, $\eps,\eps'>0$, and $n$ sufficiently large \begin{equation}
	 			\begin{aligned}
	 					\Lambda_{f,n}\left((0,x)\times\ii[-\eps',\eps]\right)&=\frac{1}{n}n_{f,+}\left(x{b_n}\right)\\
	 					&=\frac{1}{n}n_{f}\left(xb_n\right)\frac{n_{f,+}\left(xb_n\right)}{n_{f}\left(x{b_n}\right)}\\
	 					&\sim \theta x^{\alpha}.
	 				\end{aligned}
	 		\end{equation}  Similarly, for any $x<0$ \begin{equation}
	 			\Lambda_{f,n}\left((x,0)\times\ii[-\eps',\eps]\right)\sim (1-\theta){|x|^{\alpha}}.
	 		\end{equation}  Additionally, if $B$ is any closed bounded set such $B\cap\R=\emptyset$, then $\Lambda_{f,n}(B)=0$ for $n$ sufficiently large. Thus, $\Lambda_{f,n}$ converges vaguely to the measure $\Lambda_{\alpha,\theta}$ on $\R$ with density $$\dd \Lambda_{\alpha,\theta}(x)=\alpha\left(\theta\oindicator{x>0}+(1-\theta)\oindicator{x<0}\right)|x|^{\alpha-1}\dd x.$$ 
	 		
	 		We now combine these facts to complete the proof. Let $f$ be a continuous function with compact support. Then, from \eqref{eq:near 0 Sigma bound} we see for any $\eps>0$ \begin{equation}\label{eq:f Sigma bound}
	 			\left|\int_{B_{\eps}(0)}f\dd\Sigma_{n} \right|=O_{\eps}(\eps^{2-\alpha}),
	 		\end{equation} uniformly in $n$. Additionally, \begin{equation}\label{eq:f Sigma unbound}
	 		\begin{aligned}
	 				\int_{B_{\eps}(0)^c}f(t)\dd\Sigma_{n}(t)&=\int_{B_{1/\eps}(0)} f\left(\frac{1}{s}\right)\frac{1}{s^2+1}\dd\Lambda_{f,n}(s)\\
	 				&\rightarrow\int_{B_{1/\eps}(0)} f\left(\frac{1}{s}\right)\frac{1}{s^2+1}\dd\Lambda_{\alpha,\theta}(s)\\
	 				&=	\int_{B_{\eps}(0)^c}f(t)\dd\Sigma_{\alpha,\theta}(t)
	 		\end{aligned}
	 		\end{equation} Combining \eqref{eq:f Sigma bound} and \eqref{eq:f Sigma unbound} completes the proof.

	\subsection{Proof of Theorem \ref{thm:local spacing}} Theorem \ref{thm:Appell Planc-Roa asymp} provides the asymptotic equivalence
		\begin{align}\label{eq:PR asymp in bulk}
		A_{n+d,f_n}(z)\sim 2\Re\bigg[&\frac{(n+d)!}{n^{n+d}}\sqrt{\frac{(u^*)'(z) }{2\pi n}}\frac{f_{n}\left(nu_n^*(z) \right)\exp\left(nzu_n^*(z) \right)}{u_n^*(z)^{n+d+1}}\bigg]\nonumber\\
		\sim 2\Re\bigg[&\frac{(n+d)!}{n^{n+d} u^*(z)^{d+1}}\sqrt{\frac{(u^*)'(z) }{2\pi n}}\nonumber\\
		&\exp\left(n\left(  zu_n^*(z)+\frac{1}{n}\log f_{n}(nu_n^*(z))-\log u_n^*(z) \right)\right)
		\bigg]
	\end{align}
	 where $z\in\R$ such that $u^*(z)=G_{\mu_{c,\Sigma}}(z)$ and $\infty>-\Im G_{\mu_{c,\Sigma}}(z+\ii0)>0$. For $z=E+\frac{w}{n}$, we aim to Taylor expand  \eqref{eq:PR asymp in bulk} about $E$. Focusing only on the terms which depend on $w$ and $d$,
	we obtain \begin{align}
		&n\left(  zu_n^*(z)+\frac{1}{n}\log f_{n}(nu_n^*(z))-\log u_n^*(z) \right)\nonumber \\
		=\ & n\left(Eu_n^*(E)+\frac{1}{n}\log f_{n}(nu_n^*(E))-\log u_n^*(E) \right)\label{eq:somemiddleterm} \\
		&+wu_n^*(E)+w(u_n^*)'(E)\left[E+\frac{f_n'(nu_n^*(E))}{f_n(nu_n^*(E))}-\frac{1}{u_n^*(E)}\right]+o(1),\nonumber
	\end{align} where the second derivative of $\left(zu_n^*(z)+\frac{1}{n}\log f_{n}(nu_n^*(z))-\log u_n^*(z)\right)$ can be uniformly bounded in a neighborhood of $E$ using Proposition \ref{prop:convergence of R-transforms} and Lemma \ref{lem:clean lem_unique_saddlepoint}. However, $u^*_{n}(E)$ is a solution to the saddle point equation \eqref{eq:clean saddle point equation}, hence \begin{align*}
	E+\frac{f_n'(nu_n^*(E))}{f_n(nu_n^*(E))}-\frac{1}{u_n^*(E)}=0.
	\end{align*} Therefore, the only $w$ dependent term in \eqref{eq:PR asymp in bulk} is $wu_n^*(E)\to wG(E+\ii0)$, which for simplicity we denote as $wG(E)$. The terms of \eqref{eq:PR asymp in bulk} and \eqref{eq:somemiddleterm} which depend only on $n$, $E$, and $\mu_{c,\Sigma}$ are absorbed in the constants
	\begin{align}
		 &2\frac{(n+d)!}{n^{n+d} |G(E)|}\sqrt{\frac{|G'(E)| }{2\pi n}}e ^{  n\Re\left(Eu_n^*(E)+\frac{1}{n}\log f_{n}(nu_n^*(E))-\log u_n^*(E) \right)}\nonumber \\
		 &\sim2\frac{\sqrt{ |G'(E)| }}{ |G(E)|}e ^{  n\Re\left(Eu_n^*(E)+\frac{1}{n}\log f_{n}(nu_n^*(E))-\log u_n^*(E) -1\right)}=:C_{E,n}\label{eq:CEn}
		\end{align}
		by an application of Stirling's formula, and 
		\begin{align}
		 B_{E,n}&:=-\arg G(E)+\tfrac 1 2 \arg G'(E)+ n\Im\left(Eu_n^*(E)+\tfrac{1}{n}\log f_{n}(nu_n^*(E))-\log u_n^*(E) \right).\label{eq:BEn}
		\end{align}
	Combining the above, we obtain our claim
		    \begin{align*}
		A_{n+d,f_n}(z)\sim C_{E,n}|G(E)|^{-d}e^{w\Re G(E)}\cos\left(B_{E,n}+w\Im G(E)-d\arg G(E) \right).\qed
	\end{align*}

	\begin{example}\label{ex:Hermite PR check}	
	For Hermite polynomials normalized so that the empirical measure converges to the standard semicircle distribution, we have $u_n^*(z)=u^*(z)=G(z)=\frac{1}{2}\left(z-\sqrt{z^2-4}\right)$  and $f_{n}(z)=e^{-\frac{z^2}{2n}}$. One can then check that \begin{equation*}
		\begin{aligned}
			|G(E)|&=1\\
			|G'(E)|&=\sqrt{\frac{1}{4-E^2}}\\
			\Re\left(Eu_n^*(E)+\frac{1}{n}\log f_{n}(nu_n^*(E))-\log u_n^*(E) -1\right)&=\frac{1}{4}E^2-\frac{1}{2}.
		\end{aligned}
	\end{equation*} In this case \begin{equation}
	C_{E,n}=2\left(\frac{1}{4-E^2}\right)^{-1/4}e^{n\left(\frac{1}{4}E^2-\frac{1}{2}\right)}.
	\end{equation} Similarly we can work out explicitly that $\arg G(E)=-\arccos(E/2)$, where $\arccos$ takes values in $[0,\pi]$, and $\arg G'(E)=\arcsin(-E/2)$, where $\arcsin$ takes values in $[-\pi/2,\pi/2]$, and thus \begin{equation}
	B_{E,n}=-\frac{E\sqrt{4-E^2}}{4}-\frac{\pi}{4}+\left(n+\frac{1}{2}\right)\arccos\left(\frac{E}{2}\right).
	\end{equation} Thus, Theorem \ref{thm:local spacing} implies that for $|E|<2$ \begin{equation}\label{eq:Hermite PR our version}
	\begin{aligned}
		n^{-n/2}\He_{n+d}\left(\sqrt{n}E+\frac{w}{\sqrt{n}}\right)&=2\left(\frac{1}{4-E^2}\right)^{-1/4}e^{n\left(\frac{1}{4}E^2-\frac{1}{2}\right)}e^{\frac{E}{2}w}\\
		&\quad\times\cos\left(-\frac{E\sqrt{4-E^2}}{4}-\frac{\pi}{4}+\left(n+d+\frac{1}{2}\right)\arccos\left(\frac{E}{2}\right)-\frac{w}{2}\sqrt{4-E^2}\right).
	\end{aligned}
	\end{equation} This matches the known Plancherel--Rotach asymptotics, after some algebra and Taylor expansion, for Hermite polynomials, see for example \cite[\S 18.15(v)]{NIST:DLMF}.
		\end{example}

		\subsection{Proofs of Corollaries  \ref{cor:Free convolution theorem} and \ref{cor:f with a root at 0}}
		Corollaries \ref{cor:Free convolution theorem} and \ref{cor:f with a root at 0} follow from combining previous work in finite free probability with our results. 
	\begin{proof}[Proof of Corollary \ref{cor:Free convolution theorem}]
		First we note that if $t>0$, set $f_{n,t}(z)= f_{\lfloor tn\rfloor }(tz)$, such that $c_{n,t}$, and $\Sigma_{n,t}$ are defined from $f_{n,t}$, then 
		 \begin{equation}\label{eq:time change limit computation}
			\begin{aligned}
				c_{n,t}&=tc_{ \lfloor tn\rfloor }\\ \Sigma_{n,t}&=nt^2\sigma_{\lfloor tn\rfloor}^2\delta_{0}+\frac{1}{n}\sum_{j=1}^{\infty}\frac{n^2t^2\alpha_{j,\lfloor tn\rfloor}^2}{n^2t^2\alpha_{j,\lfloor tn\rfloor}^2+1}\delta_{nt\alpha_{j,\lfloor tn\rfloor }}\\
				&=t\left((tn)\sigma_{\lfloor tn\rfloor}^2\delta_{0}+\frac{1}{tn}\sum_{j=1}^{\infty}\frac{(tn)^2\alpha_{j,\lfloor tn\rfloor}^2}{(tn)^2\alpha_{j,\lfloor tn\rfloor}^2+1}\delta_{(tn)\alpha_{j,\lfloor tn\rfloor }} \right)\to t\Sigma
			\end{aligned}
		\end{equation} 
		vaguely as $n\to\infty$.
		It follows from Theorem \ref{thm:main result, general} that $\llbracket A_{n,f_{n,t}}(z)\rrbracket
		\Rightarrow\mu_{tc,t\Sigma} = \mu_{c,\Sigma }^{\boxplus t}$. 		
		After recalling that $f_{\lfloor tn\rfloor}(t\partial_z)p_n(z)=p_n\boxplus_{n} A_{n, f_{n,t}}(z)$, Corollary \ref{cor:Free convolution theorem} follows immediately from Theorem \ref{thm:main result, general} and Lemma \ref{lem: convolution convergence}.
	\end{proof}

	\begin{proof}[Proof of Corollary \ref{cor:f with a root at 0}]
		Note that \begin{equation}
			\tilde{f}_n(\partial_z)p_n(z)=\partial_{z}^{m_n}\left[p_n\boxplus_{n} A_{n,f_n}(z)\right].
		\end{equation} In \cite[Theorem 7.7]{Arizmendi-Fujie-Perales-Ueda2024} the authors prove that for a sequence of real rooted polynomials $q_n$ such that $\llbracket q_n\rrbracket\Rightarrow\mu$ and $m_n$ such that $\frac{m_n}{n}\rightarrow t\in[0,1)$ that \begin{equation}\label{eq:limits of repeated diff}
		\llbracket\partial_z^{m_n}q_n\rrbracket\Rightarrow\mathcal{D}_{1-t}\left(\mu^{\boxplus \frac{1}{1-t}}\right).
		\end{equation} This unconditional statement is the completion of a series of papers on repeated differentiation and free convolution semigroups. 
		Corollary \ref{cor:f with a root at 0} then follows from Corollary \ref{cor:Free convolution theorem} and \eqref{eq:limits of repeated diff}.
	\end{proof}

	\section{The proofs for P\'olya--Schur type operators}   \label{sec:Polyaschur}
		Recall that for any linear $T$ which commutes with $z\partial_{z}$, we have from Definition \ref{def:finite boxtimes} that \begin{equation}\label{eq:multi finite free with f}
		Tp_n(z)=p_n(z)\boxtimes_{n}T(z-1)^{n}.
	\end{equation}  
	\subsection{The proof of Theorem \ref{thm:Polya schur and multiplicative groups} } 
	We will work under the assumption that $t=1$. Extending to general $t>0$ is straightforward and similar to what we have seen in the proof of Corollary \ref{cor:Free convolution theorem}, and the statement is a tautology when $t=0$. Let $A_{n,f_n}^{*}(z)=f_n(z\partial_{z})(z-1)^{n}$. 
	
	It follows from Laguerre's Theorem (Lemma \ref{lem:Lag theorem}) that $A_{n,f_n}^{*}$ has only real roots. Additionally, it follows from  $z\partial_z z^k= kz^k$ that  \begin{equation}\label{eq:A*}
		A_{n,f_n}^{*}(z)=(-1)^{n}\sum_{k=0}^{n}(-1)^{k}\binom{n}{k}f_n(k)z^k.
	\end{equation} Its coefficients alternate, hence all the roots must be  non-negative and since $f_n(0)\neq 0$ all roots of $A_{n,f_n}^{*}$ are positive.
	Thus, to prove Theorem \ref{thm:Polya schur and multiplicative groups} it is sufficient by \eqref{eq:multi finite free with f} and Lemma \ref{lem: mult convolution convergence} to prove that \begin{equation}\label{eq:An*}
	\llbracket A_{n,f_n}^{*}\rrbracket\Rightarrow\tilde\mu_{c,\Sigma}.
	\end{equation}  
	To this end, we make use of the following. \begin{lemma}[Proposition 5.1 in \cite{Jalowy-Kabluchko-Marynych2025part2}]\label{lem:exp prof of conv power}
		Let $Q_{n}$ be a polynomial with real nonnegative roots of the form \begin{equation}
			Q_{n}(z)=\sum_{k=0}^{n}(-1)^{n-k}\binom{n}{k}b_{k,n}^nz^k
		\end{equation} and suppose for every $u\in (0,1)$ \begin{equation}\label{eq:Qprofile}
			\lim\limits_{n\rightarrow\infty} b_{\lfloor un\rfloor,n}=e^{\tilde{g}(u)}
		\end{equation} for some function $\tilde{g}:(0,1)\rightarrow\R$. Then, $\llbracket Q_{n}\rrbracket$ converges weakly to the probability measure $\mu$ on $[0,\infty)$ with $S$-transform \begin{equation}
			S_{\mu}(z)=e^{\tilde{g}'(1+z)},
		\end{equation} for $z\in(-1,0)$.
	\end{lemma}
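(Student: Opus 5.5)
The plan is to translate the hypothesis on coefficients into a statement about elementary symmetric functions of the roots. Then we use a Legendre-duality (Chernoff/saddle-point) argument to turn this into convergence of the logarithmic potential-type function $\Phi_n(s)=\int\log(1+e^{s}\lambda)\,\dd\llbracket Q_n\rrbracket(\lambda)$. Finally, we read off the $S$-transform from $\Phi'$.

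\emph{Step 1: coefficients as symmetric functions.} Write the roots of $Q_n$ as $\lambda_1,\dots,\lambda_n\ge 0$, so that $Q_n(z)=\prod_i(z-\lambda_i)$ up to a positive constant. Comparing coefficients gives, after fixing the normalization at $k=n$,
\[
e_{n-k}(\lambda)=\binom{n}{k}b_{k,n}^n \cdot \mathrm{const}.
\]
Hence the hypothesis \eqref{eq:Qprofile} states that
\[
\frac1n\log e_{\lfloor vn\rfloor}(\lambda)\to \eta(v):=-v\log v-(1-v)\log(1-v)+\tilde g(1-v)+\mathrm{const}.
\]
By Newton's inequalities, $k\mapsto \log\big(e_k/\binom nk\big)$ is concave. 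Pointwise convergence of concave functions is therefore automatically locally uniform on $(0,1)$. The limit $\tilde g$ is concave, hence differentiable almost everywhere, and $\tilde g'$ is monotone.

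\emph{Step 2: duality.} We have $\prod_i(1+t\lambda_i)=\sum_k e_k t^k$. Bounding this sum above by $(n+1)$ times its largest term and below by any single term gives
\[
\Phi_n(s)=\max_k\Big[\tfrac1n\log e_k+\tfrac kn s\Big]+O\big(\tfrac{\log n}{n}\big).
\]
Thus $\Phi_n$ is, up to $o(1)$, the Legendre transform of the discrete profile. The uniform convergence from Step 1, together with control at the endpoints $v\to 0,1$ (from concavity and the trivial values $e_0=1$ and $e_n=\prod\lambda_i$), yields $\Phi_n(s)\to\Phi(s):=\sup_v[\eta(v)+vs]$ for every $s\in\R$. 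The functions $\Phi_n$ are convex, so their derivatives $\Phi_n'(s)=\int \frac{e^s\lambda}{1+e^s\lambda}\dd\llbracket Q_n\rrbracket$ converge at every differentiability point of $\Phi$. The family $\{\lambda\mapsto e^s\lambda/(1+e^s\lambda)\}_s$ determines measures on $[0,\infty)$ through the Stieltjes transform. Tightness follows from the behaviour of $\Phi$ as $s\to\pm\infty$, which is controlled by $\eta$ near $v=1$ and $v=0$, i.e.\ by $\tilde g$ near the ends. Together these give $\llbracket Q_n\rrbracket\Rightarrow\mu$ with $\Phi_\mu=\Phi$.

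\emph{Step 3: identifying $S_\mu$.} We have $\Phi'(s)=-\psi_\mu(-e^{s})$. By Legendre duality, $u=\Phi'(s)$ if and only if $s=-\eta'(u)$. Hence $\psi_\mu^{-1}(-u)=-e^{-\eta'(u)}$, and
\[
S_\mu(-u)=\frac{1-u}{-u}\,\psi_\mu^{-1}(-u)=\frac{1-u}{u}e^{-\eta'(u)}.
\]
Inserting $\eta'(u)=\log\frac{1-u}{u}-\tilde g'(1-u)$ makes the binomial entropy term cancel the factor $\frac{1-u}{u}$, leaving $S_\mu(-u)=e^{\tilde g'(1-u)}$. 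Setting $z=-u\in(-1,0)$ gives $S_\mu(z)=e^{\tilde g'(1+z)}$. A.e.\ differentiability suffices here, since both sides are analytic or monotone in $z$.

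\emph{Main obstacle.} The delicate point is Step 2 at the boundary. Pointwise convergence of the profile on $(0,1)$ does not immediately control the extreme coefficients, so both mass escaping to $\infty$ and mass concentrating at $0$ must be ruled out or correctly accounted for (the latter corresponds to $\mu(\{0\})>0$). I would handle this by using concavity to extend the convergence up to the boundary as upper bounds. I would also note that the Legendre supremum for $\Phi(s)$ with $s$ in compacts is attained at interior $v$, which makes the endpoint contributions negligible. Tightness is then read off from the limits $\Phi'(s)\to 0$ and $\Phi'(s)\to 1-\mu(\{0\})$ as $s\to\mp\infty$.
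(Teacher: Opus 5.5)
The paper does not prove this lemma; it quotes it from Proposition 5.1 of \cite{Jalowy-Kabluchko-Marynych2025part2}. Your route is the natural exponential-profile argument: Newton concavity of $k\mapsto\log b_{k,n}$, Laplace/Legendre duality for $\frac1n\log\prod_i(1+e^s\lambda_i)$, the identity $\Phi'(s)=-\psi_\mu(-e^s)$, and cancellation of the binomial entropy against $\frac{1-u}{u}$. Your Step 3 algebra is correct.

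However, one step fails as written. The hypothesis constrains $b_{k,n}$ only for $k=\lfloor un\rfloor$ with $u\in(0,1)$. It says nothing about the leading coefficient $b_{n,n}^n$. Your ``const'' equals $-\log b_{n,n}$, and it need not converge, since concavity only bounds $\log b_{n,n}$ from above. Consequently neither $\frac1n\log e_{\lfloor vn\rfloor}(\lambda)$ nor $\Phi_n(s)$ need converge. For example, $Q_n(z)=e^{-n^2}(z-e^{n^2})(z-1)^{n-1}$ has nonnegative roots and
\[
b_{k,n}^n=\frac{n-k}{n}+e^{-n^2}\frac{k}{n}.
\]
Hence $\tilde g\equiv 0$ and $\llbracket Q_n\rrbracket\Rightarrow\delta_1$, which is consistent with $S_{\delta_1}\equiv 1$. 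Yet $\Phi_n(s)\ge\frac1n\log(1+e^{s+n^2})\to\infty$.

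The repair is cheap: never normalize, and run your argument for
\[
\Phi_n(s)+\log b_{n,n}=\frac1n\log\sum_{k=0}^{n}\binom{n}{k}b_{k,n}^n\,e^{s(n-k)}.
\]
This uses only the given coefficients. It converges to $\sup_v\,[H(v)+\tilde g(1-v)+vs]$, where $H(v)=-v\log v-(1-v)\log(1-v)$, by exactly your Laplace argument. The terms with $k/n$ near $0$ or $1$ are controlled from above by concavity of $k\mapsto\log b_{k,n}$, as you indicate. It also has the same derivative $\int\frac{e^s\lambda}{1+e^s\lambda}\,\dd\llbracket Q_n\rrbracket(\lambda)$ as $\Phi_n$, so the rest of your proof goes through unchanged.

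Two smaller points.
\begin{itemize}
\item You need not allow for $\mu(\{0\})>0$. Since $\tilde g$ is finite and concave, $\tilde g'(0^+)>-\infty$, so $\eta'(v)=\log\frac{1-v}{v}-\tilde g'(1-v)\to-\infty$ as $v\to1$. Hence the maximizer $\Phi'(s)\to1$ as $s\to\infty$, which forces $\mu(\{0\})=0$. This is what makes $\psi_\mu^{-1}$, and hence $S_\mu$, defined on all of $(-1,0)$ as the lemma asserts.
\item The a.e.\ hedging in Step 3 is unnecessary. $\eta$ is strictly concave because of the entropy term, so $\Phi$ is differentiable everywhere. Conversely, $\Phi'(s)=\int\frac{e^s\lambda}{1+e^s\lambda}\,\dd\mu$ is strictly increasing, so $\eta$, and therefore $\tilde g$, is differentiable on $(0,1)$.
\end{itemize}
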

	
We aim to apply Lemma \ref{lem:exp prof of conv power} to $Q_n=\frac{1}{f_n(n)}A_{n,f_n}^*$ and $b_{k,n}^n=\frac{f_n(k)}{f_n(n)}$. In order to verify \eqref{eq:Qprofile}, we use Proposition \ref{prop:convergence of R-transforms}, part (2). Observe that while Proposition \ref{prop:convergence of R-transforms} is formulated for convergence of $\frac{1}{n}\log f_n(nu)$ for $u\in\C_{-}$, it can be immediately extended to $u\in (0,\infty)$ in our case, where $f_n$ has only negative roots such that $\log f_n(nu)$ is holomorphic in a neighborhood of the positive real line.  Hence, if  $u\in(0,\infty)$, then  \begin{equation}\label{eq:log conv of mult appell}
	\begin{aligned}
		\lim\limits_{n\rightarrow\infty}\frac{1}{n}\log \frac{f_{n}(nu)}{f_n(n)}
		=-c(u-1)+\int_{\R}\log\left(\frac{1-ux}{1-x}\right)\frac{x^2+1}{x^2}+\frac{u-1}{x}\dd\Sigma(x)=:\tilde g (u).
	\end{aligned}
	\end{equation} 
Therefore, $\lim_{n\to\infty} b_{\lfloor un\rfloor,n}=\exp(\frac{1}{n}\log \frac{f_{n}(\lfloor un\rfloor)}{f_n(n)})=\exp(\tilde{g}(u))$ with $\tilde g '=-R_{\mu_{c,\Sigma}}$ by Proposition \ref{prop:convergence of R-transforms} (1). It follows from Lemma \ref{lem:exp prof of conv power} that $\llbracket Q_n\rrbracket=\llbracket A_{n,f_n}^{*}\rrbracket$ converges weakly to a probability  measure $\tilde\mu_{c,\Sigma}$ on $[0,\infty)$, and that\begin{equation}\label{eq:S transform of mult Appell}
	S_{\tilde\mu_{c,\Sigma}}(z)=\exp\left(-R_{\mu_{c,\Sigma}}(1+z) \right),
	\end{equation} for $z\in (-1,0)$. It follows from \cite{Bercovici-Voiculescu1993}, see also \cite[Theorem 2.1]{Arizmendi-Hasebe2018}, that a measure $\mu$ is $\boxtimes$-ID if and only if \begin{equation}
	S_{\mu}\left(\frac{z}{1-z}\right)=\exp\left(v_{\mu}(z)\right),
	\end{equation} for some function $v_{\mu}$ which is analytic on $\C\setminus[0,\infty)$, $v_{\mu}(\bar{z})=\overline{v_{\mu}(z)}$, and $v_\mu(\C_-)\subset \C_+\cup\R$. To see that $
	v_{\tilde\mu_{c,\Sigma}}(z)=-R_{\mu_{c,\Sigma}}\left(\frac{1}{1-z}\right),
$ satisfies these properties one need only note that $R_{\mu_{c,\Sigma}}$ is analytic on $\C\setminus (-\infty,0]$, since $f_n$ has only negative roots, it is a real analytic function by the integral representation \eqref{eq:free levy-khin}, and that $R_{\mu_{c,\Sigma}}$ maps $\C_{-}$ into itself. Thus $\tilde\mu_{c,\Sigma}$ is $\boxtimes$-ID. 
	
	Combining Lemma \ref{lem: mult convolution convergence} and \eqref{eq:S transform of mult Appell} completes the proof of Theorem \ref{thm:Polya schur and multiplicative groups}.  \qed 
	
	During the proof, we recovered the following coefficient asymptotics of $A_{n,f_n}^{*}$ due to  \cite{Jalowy-Kabluchko-Marynych2025,Arizmendi-Fujie-Perales-Ueda2024}, which follows immediately from \eqref{eq:A*} and \eqref{eq:log conv of mult appell} and Stirling's formula. 
	\begin{corollary}
		For $f_n$ as in Theorem \ref{thm:Polya schur and multiplicative groups} and $A_{n,f_n}^{*}(z)= f_n(z\partial_{z})(z-1)^{n}$ with alternating-sign coefficients $a_{k;n}$ we have
		$$\lim_{n\to\infty}\frac 1 n \log \left\lvert\frac{a_{\lfloor un \rfloor+1;n}}{a_{\lfloor un \rfloor;n}}\right\rvert=-\log\left(\frac{1-u}{u}\right)-R_{c,\Sigma}(u)
		$$
		 uniformly in $\varepsilon<u<1-\varepsilon$ for any $\varepsilon>0$.
	\end{corollary}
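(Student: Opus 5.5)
The plan is to compute the ratio of consecutive coefficients exactly from \eqref{eq:A*} and then pass to the limit using the $R$-transform convergence of Proposition \ref{prop:convergence of R-transforms}(1), extended to the positive half-line. By \eqref{eq:A*}, $|a_{k;n}|=\binom{n}{k}|f_n(k)|$, where $f_n(k)>0$ because $f_n\in\LP$ has only negative roots and $f_n(0)\neq0$. Hence, with $k=\lfloor un\rfloor$,
\begin{equation*}
\log\left|\frac{a_{k+1;n}}{a_{k;n}}\right|=\log\frac{n-k}{k+1}+\log f_n(k+1)-\log f_n(k).
\end{equation*}
The first term converges to $\log\frac{1-u}{u}$ uniformly for $\varepsilon<u<1-\varepsilon$.

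For the second term I would write it as an integral of the logarithmic derivative. Since $f_n$ has no zeros on $[0,\infty)$,
\begin{equation*}
\log f_n(k+1)-\log f_n(k)=\int_k^{k+1}\frac{f_n'(s)}{f_n(s)}\,\dd s=-\int_{k/n}^{(k+1)/n} n\,R_n(v)\,\dd v,
\end{equation*}
where $R_n(v)=-\frac{f_n'(nv)}{f_n(nv)}$ is the function from \eqref{eq:clean R transform}.

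Next I would extend Proposition \ref{prop:convergence of R-transforms}(1) to $v$ in compact subsets of $(0,\infty)$. This uses the same argument already invoked for \eqref{eq:log conv of mult appell}. Because all $n\alpha_{j,n}$ are negative, the integrand $\frac{v+x}{1-xv}$ is bounded and continuous on $(-\infty,0]$, uniformly in $v\in[\varepsilon,1]$. Vague convergence $\Sigma_n\to\Sigma$, together with \eqref{eq:c assumptions} and tightness of the total masses (read off at $v=-\ii$), then yields $R_n\to R_{\mu_{c,\Sigma}}$ uniformly on $[\varepsilon,1]$. Since $k/n\to u$ uniformly, $\int_{k/n}^{(k+1)/n}nR_n(v)\,\dd v\to R_{\mu_{c,\Sigma}}(u)$ uniformly in $u\in(\varepsilon,1-\varepsilon)$. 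Combining the two terms gives a limit of the form $\pm\log\frac{1-u}{u}-R_{\mu_{c,\Sigma}}(u)$.

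The main obstacle is bookkeeping rather than analysis. The normalization in the statement, a $\frac1n$ in front together with the sign of $\log\frac{1-u}{u}$, has to be matched with the indexing and profile convention of \cite{Jalowy-Kabluchko-Marynych2025}. In that convention coefficients are written as $b_{k,n}^n$, so the relevant quantity is the $n$-th root profile, and $a_{k;n}$ may index the power $z^{n-k}$ rather than $z^k$. I would first fix the convention so that reversing $k\mapsto n-k$ produces the displayed sign of the binomial term. I would then double-check the result against the exponential profile $\tilde g$ of \eqref{eq:log conv of mult appell} via $\tilde g'=-R_{\mu_{c,\Sigma}}$ and Stirling's formula applied to $\binom nk$.
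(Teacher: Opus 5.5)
Your analysis is correct, but the step you defer as ``bookkeeping'' cannot be carried out. The displayed formula is misprinted, and no convention will make your computation produce it.

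Here is what you actually prove. From \eqref{eq:A*}, $|a_{k+1;n}/a_{k;n}|=\frac{n-k}{k+1}\cdot\frac{f_n(k+1)}{f_n(k)}$. Write $\log\frac{f_n(k+1)}{f_n(k)}=-\int_{k/n}^{(k+1)/n}nR_n(v)\,\dd v$ and use $R_n\to R_{\mu_{c,\Sigma}}$ uniformly on $[\varepsilon/2,1]$. Your justification of this convergence is fine: $\Sigma_n$ lives on $(-\infty,0]$, where $\frac{v+x}{1-xv}$ is bounded and equi-Lipschitz in $v$. Moreover $\frac{x-\ii}{1+\ii x}\equiv-\ii$, so convergence at $v=-\ii$ gives convergence of total masses, which upgrades vague to weak convergence. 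One gets, uniformly in $\varepsilon<u<1-\varepsilon$,
\begin{equation*}
\log\left|\frac{a_{\lfloor un\rfloor+1;n}}{a_{\lfloor un\rfloor;n}}\right|\longrightarrow\log\frac{1-u}{u}-R_{\mu_{c,\Sigma}}(u).
\end{equation*}
This is incompatible with the statement on two counts.

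First, with the prefactor $\frac1n$ the left-hand side tends to $0$. Reading $a_{k;n}$ as $b_{k,n}^n=f_n(k)/f_n(n)$ from Lemma \ref{lem:exp prof of conv power} does not help, since $\log f_n(k+1)-\log f_n(k)=O(1)$.

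Second, reversing the index cannot flip the sign of the logarithm. Since $\binom{n}{k+1}/\binom nk=\binom{n}{n-k-1}/\binom{n}{n-k}=\frac{n-k}{k+1}$, the binomial part is unchanged under $k\mapsto n-k$, while $-R_{\mu_{c,\Sigma}}(u)$ becomes $+R_{\mu_{c,\Sigma}}(1-u)$.

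A one-line check is $f_n(z)=e^{-cz}$. Then $A^*_{n,f_n}(z)=(e^{-c}z-1)^n$ and $R_{\mu_{c,\Sigma}}\equiv c$, and the ratio is exactly $\frac{n-k}{k+1}e^{-c}$. So instead of hedging with $\pm$, commit to the display above. State that the corollary must be corrected: drop the $\frac1n$, and the logarithmic term is $+\log\frac{1-u}{u}$.

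The corrected formula is also what the paper's one-line justification yields. Equation \eqref{eq:A*}, Stirling's formula and \eqref{eq:log conv of mult appell} give the exponential profile
\begin{equation*}
\tfrac1n\log|a_{\lfloor un\rfloor;n}|-\tfrac1n\log f_n(n)\to g(u):=-u\log u-(1-u)\log(1-u)+\tilde g(u).
\end{equation*}
Since $\tilde g'=-R_{\mu_{c,\Sigma}}$, we get $g'(u)=\log\frac{1-u}{u}-R_{\mu_{c,\Sigma}}(u)$.

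That route, however, only controls the profile. To pass to ratios of consecutive coefficients one needs derivative-level information. Two options are concavity of $s\mapsto\log f_n(s)$ on $[0,\infty)$ (all roots are negative), or Proposition \ref{prop:convergence of R-transforms}(1) extended to the positive axis. Your direct argument through $\int nR_n$ supplies exactly this, so it is the more complete of the two.
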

	
	\subsection{The proof of Theorem \ref{thm:Polya-Schur full version}}   Recall that \begin{equation*}
	\begin{aligned}
			T_np_n(z)&=p_n(z)\boxtimes_{n}T_n(z-1)^{n}\\
						  &=p_n(z)\boxtimes_{n}\sum_{k=0}^{n}(-1)^{(n-k)}\binom{n}{k}\Phi_{T_n}^{(k)}(0)z^{k}\\
						  &=p_n(z)\boxtimes_{n}(-1)^{n}J_{n,\Phi_{T_n}}(-z).\\
	\end{aligned}
	\end{equation*} If $\mu_{c,\Sigma}$ is the limiting measure of $A_{n,f_n}$ for the choice $f_n=\Phi_{T_n}$ in Theorem \ref{thm:main result, general}, then it follows from Corollary \ref{cor:Convergnce of Jensen polynomials} that \begin{equation}
	\left\llbracket (-1)^{n}J_{n,\Phi_{T_n}}(-z) \right\rrbracket\Rightarrow \mathcal{D}_{-1}\mu_{c,\Sigma}^{-1},
	\end{equation} the push-forward of $\mu_{c,\Sigma}^{-1}$ by $x\mapsto -x$. Applying Lemma \ref{lem: mult convolution convergence Fujie} completes the proof.  
	
	\section{Proof of Theorem \ref{thm:rectangular Appell polynomials}}   We will use the power series expansion \begin{equation}
		f_{n}(z)=\sum_{k=0}^{\infty}\eta_{k,n}\frac{z^k}{k!},
	\end{equation} in order to write out the polynomial explicitly. Then, using Definition \ref{def:finite boxtimes} and \eqref{eq:formulas for A and J},	
	\begin{equation}\label{eq:decomp of finite rect inf div}
	\begin{aligned}
		f_n\left(\frac{\lambda}{n}\mathscr{L}_{\beta}\right)z^{n}&=\sum_{k=0}^{n}\eta_{k,n}\frac{\Gamma(n+\beta+1)}{\Gamma(n+\beta-k+1)}\left(\frac{\lambda}{n}\right)^k\binom{n}{k}z^{n-k}\\
		&=\left(\sum_{k=0}^{n}(-1)^{k}\frac{\Gamma(n+\beta+1)}{\Gamma(n+\beta-k+1)}\left(\frac{\lambda}{n}\right)^k\binom{n}{k}z^{n-k}\right)\boxtimes_{n} A_{n,f_n}(z)\\
		&=n!\left(\frac{\lambda}{n}\right)^n(-1)^{n}L_{n}^{(\beta)}\left(\frac{n}{\lambda}z\right)\boxtimes_{n} A_{n,f_n}(z),
	\end{aligned}
	\end{equation} where $L_{n}^{(\beta)}$ is the degree $n$ generalized Laguerre polynomial. Since $p\boxtimes_{n}q$ has non-negative roots if both $p$ and $q$ do, \cite[Theorem 1.16]{Marcus-Spielman-Srivastava2022}, it then follows that $f(\lambda \mathscr{L}_{\beta}/n)z^n$ has positive roots (recall the Appell polynomials of functions in $\mathcal{LPI}$ have non-negative roots). Classical convergence results for zeros distributions of Laguerre polynomials show $\llbracket L_n^{\beta}(nz)\rrbracket\Rightarrow \mathsf{MP}_{1/\lambda,\lambda}$, see for instance \cite[Corollary 4.14]{Jalowy-Kabluchko-Marynych2025part2}, hence $\llbracket L_n^{\beta}(nz/ \lambda)\rrbracket\Rightarrow \mathsf{MP}_{1,\lambda}$ as claimed.  Convergence of $\left\llbracket f_{n}(\mathscr{L}_{\beta})z^{n}\right\rrbracket$ then follows from Theorem \ref{thm:main result, general} and Lemma \ref{lem: mult convolution convergence}.

\appendix 

 \section{Proof of the P\'olya--Schur theorem for rectangular differentiation} \label{appendix}
\begin{proof}[Proof of Lemma \ref{lem:PS for rect}]
	The ``if'' direction follows from \eqref{eq:decomp of finite rect inf div} and \cite[Theorem 1.16]{Marcus-Spielman-Srivastava2022}, as discussed in the proof of Theorem \ref{thm:rectangular Appell polynomials}.

	For the ``only if'' direction we follow the original approach of \cite{Schur--Polya1914,Benz1935}. Define the polynomials \begin{equation}\label{eq:T Lag-Appell}
		T[z^n]=p_{T,n}(z).
	\end{equation} It follows from commutativity and a straightforward computation that $T$ is  degree non-increasing,  i.e.~upper-triangular with constant diagonal in the monomial basis $\{1,z,\dots\}$. Thus, we write $p_{T,n}$ in the form \begin{equation}
	p_{T,n}(z)=\sum_{k=0}^{n}(-1)^{k}t_{k,n}\frac{\Gamma(n+\beta+1)}{\Gamma(n+\beta-k+1)}\binom{n}{k}z^{n-k}
	\end{equation}  From the commutativity relation \begin{equation}
	\mathscr{L}_{\beta}p_{T,n}(z)=Tn(n+\beta)z^{n-1}=n(n+\beta)p_{T,n-1}(z).
	\end{equation} Comparing coefficients we see that \begin{equation}
	t_{k,n}\frac{\Gamma(n+\beta+1)}{\Gamma(n+\beta-k+1)}\binom{n}{k}(n-k)(n-k+\beta)=t_{k,n-1}\frac{\Gamma(n+\beta)}{\Gamma(n+\beta-k)}\binom{n-1}{k}n(n+\beta),
	\end{equation} and \begin{equation}
	t_{k,n}=t_{k,n-1}
	\end{equation} and hence we drop the $n$ notation and use $t_k$. We will assume for simplicity that $t_0=1$ and we let $J_{T,n}(z)=z^np_{T,n}(1/z)$. If instead $t_{r}$ is the first non-zero coefficient we need only change to $J_{T,n}(z)=z^{n-r}p_{T,n}(1/z)$ and make the obvious adjustments below.  For any $k$ let $x_{1,k},\dots,x_{k,k}$ denote the roots of $p_{T,k}$.
	Then, \begin{equation}
	J_{T,k}(z)=\prod_{j=1}^{k}\left(1-x_{j,k}z\right),
	\end{equation} and \begin{equation}
\begin{aligned}
		t_{1}(k+\beta)k&=\sum_{j=1}^kx_{j,k},\\
		t_{k}\frac{\Gamma(k+\beta+1)}{\Gamma(\beta+1)}&=\prod_{j=1}^kx_{j,k}.
\end{aligned}
	\end{equation} From the arithmetic-mean and geometric-mean inequality \begin{equation}
	t_{k}\leq \Gamma(\beta+1)t_{1}^k\frac{ (k+\beta )^k}{\Gamma(\beta+k+1)}.
	\end{equation} It then follows from Stirling's formula that \begin{equation}\label{eq:t_k bound}
	\left(\frac{t_k}{k!}\right)^{1/k}=O\left(\frac{1}{k}\right).
	\end{equation}	We then have that, \begin{equation}
	\lim\limits_{n\rightarrow\infty}J_{T,n}\left(\frac{z}{n(n+\beta)}\right)=\sum_{k=0}^{\infty}(-1)^{k}t_{k}\frac{z^k}{k!}=:f(z),
	\end{equation}  where the right-hand side defines an entire function by \eqref{eq:t_k bound}. $f$ is the limit of non-negative rooted polynomials, and hence in $\mathcal{LPI}$. It then follows from the definition of $\{t_j\}$ and a straight-forward computation that $T[z^n]=f(\mathscr{L}_{\beta})z^n$.
\end{proof}

	\bibliography{Appell}
	\bibliographystyle{abbrv}

\end{document}